\def\r{\mathbb R}
\def\h{\mathbb H}
\newtheorem{theorem}{Theorem}[section]
\newtheorem{corollary}[theorem]{Corollary}
\newtheorem{lemma}[theorem]{Lemma}
\newtheorem{proposition}[theorem]{Proposition}
\newtheorem{definition}[theorem]{Definition}
\newtheorem{conjecture}[theorem]{Conjecture}
\newtheorem{rem}[theorem]{Remark}
\def \r{\mbox{${\mathbb R}$}}
\def \h{\mbox{${\mathbb H}$}}
\begin{document}
	
\title{\textbf{Axially Symmetric Helfrich Spheres}}

\author{R. L\'opez, B. Palmer and A. P\'ampano}
\date{\today}

\maketitle

\begin{abstract}
Smooth axially symmetric Helfrich topological spheres are either round or else they must satisfy a second order equation known as the reduced membrane equation \cite{PP2}. In this paper, we show that, conversely, axially symmetric closed genus zero solutions of the reduced membrane equation which, in addition, satisfy a rescaling condition are axially symmetric Helfrich spheres. We also exploit this characterization to geometrically describe these surfaces and present convincing evidence that they are symmetric with respect to a suitable plane orthogonal to the axis of rotation and that they belong to a particular infinite discrete family of surfaces.\\

\noindent{\emph{Keywords:} Axial Symmetry, Helfrich Energy, Reduced Membrane Equation, Topological Spheres.}\\
\noindent{\emph{MSC 2020:} 35J93, 35J20, 58E12, 53A10.}
\end{abstract}

\begin{figure}[h!]
	\centering
	\includegraphics[height=6cm]{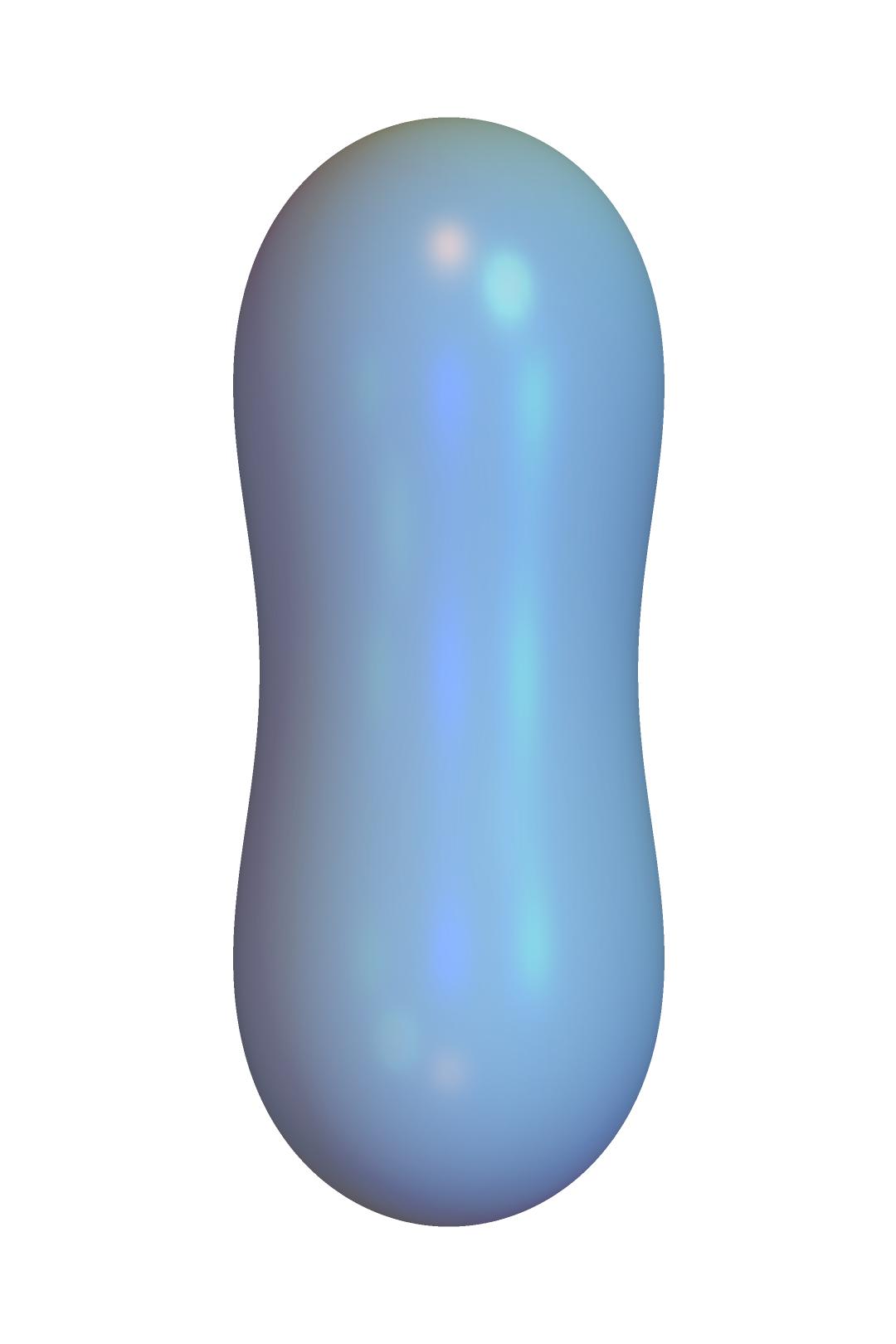}
\end{figure}


\section{Introduction}

The main constituents of cellular membranes in most living organisms are lipid bilayers formed from a double layer of phospholipids, which, in general, have distinct compositions and surface tensions. These bilipid membranes are usually modeled as mathematical surfaces since they are very thin compared to the size of the observed cells. In addition, for physical reasons related to the principle of non-interpenetration of matter, these surfaces are usually assumed to have no self-intersections. 

Historically, there has been a great interest in explaining the morphology of these membranes. Since membranes spontaneously close generating vesicles, it was recognized early on that their surface energy was not that of surface tension. In 1973, by comparing the similarities between a bilipid membrane and a thin layer of nematic liquid crystal, with the membrane's surface normal playing the role of the director field of the liquid crystal, the German physicist W. Helfrich \cite{H} established the surface energy of the bilipid membrane to be
$$\mathcal{H}_{a,c_o,b}[\Sigma]:=\int_\Sigma\left(a(H+c_o)^2+bK\right)d\Sigma\,,$$
where $H$ and $K$ denote, respectively, the mean and Gaussian curvature of the membrane surface $\Sigma$. This  is a type of elastic energy with three parameters that depend on the composition of the membrane itself: $a>0$ is the bending rigidity modulus, $b$ is the saddle-splay modulus and $c_o$ is the spontaneous curvature (this definition for $c_o$ differs from the classical one by the sign and a coefficient two). Assuming the membrane is homogeneous, these physical parameters are constant.

Bilipid membranes are composed of bipolar phospholipid molecules having a hydrophilic head and a hydrophobic tail. When a sufficiently high density of these molecules is reached in an aqueous solution, the molecules self-assemble to hide the tails from the surrounding solvent, preventing an energetically unfavorable scenario. This often results in a closed bilayer with the heads of the molecules pointing outwards (see \cite{BMF} and references therein). 

For closed surfaces $\Sigma$, the energy $\mathcal{H}_{a,c_o,b}$ is variationally equivalent to
$$\mathcal{H}[\Sigma]:=\int_\Sigma (H+c_o)^2\,d\Sigma\,,$$
since the integral of the Gaussian curvature can be reduced to a topological invariant by the Gauss-Bonnet theorem. The functional $\mathcal{H}$ is called the \emph{Helfrich energy} and its critical points are referred to as \emph{Helfrich surfaces}. It turns out that the only physical parameter playing a role in the determination of the shape of these surfaces is the spontaneous curvature $c_o$. This parameter originates primarily from the asymmetry between the two layers of the membrane, although it may also arise from differences in the chemical properties of the fluids on both sides of the lipid bilayer \cite{DSL,S}.

The Euler-Lagrange equation associated with the Helfrich energy $\mathcal{H}$ is the fourth order nonlinear elliptic partial differential equation
\begin{equation}\label{EulerLagrange}
	\Delta H+2(H+c_o)\left(H(H-c_o)-K\right)=0\,,
\end{equation}
where $\Delta$ denotes the Laplace-Beltrami operator. Observe that constant mean curvature surfaces with $H= -c_o$ trivially satisfy the above equation. If the constant mean curvature surface is a topological sphere, Hopf proved that it must be round \cite{Ho}. In addition, in the case of closed (of arbitrary genus) and embedded constant mean curvature surfaces, the classical Alexandrov's theorem yields the same conclusion \cite{A}. Hence, the main interest is to study  Helfrich surfaces with non-constant mean curvature. 

Other than round spheres with $H= -c_o$, few explicit examples of closed and embedded Helfrich surfaces are known in the literature. Arguably, the most common non-trivial examples are the \emph{circular biconcave discoids} \cite{NOO}, which are a well-known family of axially symmetric solutions of \eqref{EulerLagrange}. Unfortunately, these surfaces are singular \cite{Tu}. Indeed, they fail to be $\mathcal{C}^2$ at their two `poles' and also fail to be critical points of $\mathcal{H}$ (see Proposition \ref{discoids} below). Recently, there have been several works studying closed Helfrich surfaces, but these are often considered in combination with some constraints, such as, fixed area and/or fixed enclosed volume \cite{BWW,CV,MS} as well as fixed isoperimetric ratio \cite{KM}. Moreover, numerical approaches have also been recently published (see, for instance, \cite{CYBKYZ}). Nonetheless, solutions for the unconstrained case, that is, closed critical points for $\mathcal{H}$, give a rich family of surfaces which has yet to be fully understood. The lack of examples is mainly due to the fact that the Euler-Lagrange equation \eqref{EulerLagrange} is of order four and when $c_o\neq 0$, contrary to the Willmore case ($c_o=0$), the functional is not conformally invariant, which makes its analysis rather complicated. Similarly, since there are no restrictions imposed to the variations, the direct method of the calculus of variations does not directly apply. 

The second and third authors recently introduced in \cite{PP2} the second order equation
\begin{equation}\label{RME}
	H+c_o=\frac{\nu_3}{A-z}\,,
\end{equation}
and showed that, with enough regularity, it is a sufficient condition for \eqref{EulerLagrange} to hold. For this reason, this equation is referred to as the \emph{reduced membrane equation}. Here, $z$ is a coordinate of the surface, that as customary will be taken to be the vertical one; $\nu_3$ is the vertical component of the surface normal $\nu$; and, $A$ is a real constant, which can be assumed to be zero after a suitable translation of the vertical coordinate, if necessary. The reduced membrane equation \eqref{RME} is a necessary condition for the Euler-Lagrange equation \eqref{EulerLagrange} to hold if, for instance, the surface is assumed to be an axially symmetric topological sphere which is sufficiently regular so as to be a weak solution of \eqref{EulerLagrange}. 

In this paper we begin by showing in Theorem \ref{z=0} that $\mathcal{C}^3$ closed surfaces satisfying \eqref{RME} are Helfrich surfaces which, in addition, intersect the horizontal plane $\{z=0\}$ orthogonally at closed geodesics. The set of points of the surface where $z=0$ holds is a singularity of the equation \eqref{RME}. Thus, in order to obtain closed genus zero solutions it is necessary to glue together  two disc type surfaces along the points where $z=0$ holds. Even though there exist non-axially symmetric surfaces satisfying the reduced membrane equation as well \cite{PP3}, we focus on the axially symmetric case. In Section 3 we describe geometrically the generating curves of axially symmetric surfaces satisfying \eqref{RME} which meet the axis of rotation. In particular, we prove that infinitely many of them intersect the axis $z=0$ orthogonally (Theorems \ref{tc1} and \ref{TypeII}). Consequently, it is possible to generate closed surfaces satisfying \eqref{RME} by gluing together suitable axially symmetric disc type surfaces along their geodesic boundary circle located at $z=0$. Additionally, we then prove that the closed surfaces obtained this way are real analytic everywhere except perhaps, at $z=0$ where these surfaces have $\mathcal{C}^2$ regularity (Theorem \ref{regularity}). However, if $c_o\neq 0$, they may not have $\mathcal{C}^3$ regularity at $z=0$. It turns out that $\mathcal{C}^3$ regularity is a sufficient and necessary condition to obtain critical points of the Helfrich energy $\mathcal{H}$ (Lemma \ref{2}). When $c_o\neq 0$, this $\mathcal{C}^3$ regularity condition is equivalent to the mean value of $H+c_o$ on the surface being identically zero; an integral condition arising from deformations of the surface through rescalings. Therefore, we conclude in Theorem \ref{rescaling} with a characterization of axially symmetric Helfrich topological spheres (other than round ones) as solutions of the reduced membrane equation \eqref{RME} which satisfy the additional rescaling condition 
$$\int_\Sigma (H+c_o)\,d\Sigma=0\,.$$
In particular, the above rescaling condition is automatically satisfied if the mean value of $H+c_o$ is zero on both axially symmetric disc type surfaces glued together to obtain the topological sphere. We show in Proposition \ref{symmetry} that these two conditions imply that the closed surface is symmetric with respect to the plane $\{z=0\}$. Exploiting the characterization of Theorem \ref{rescaling} and the geometric property of Proposition \ref{symmetry}, in Section 5 we present several non-trivial examples of axially symmetric Helfrich surfaces which are symmetric with respect to the plane $\{z=0\}$ (illustrations of these surfaces can be found in Figure \ref{Helfrich} and in the last few pages of this paper.). Finally, we conjecture that, in fact, axially symmetric Helfrich topological spheres must be symmetric with respect to the plane $\{z=0\}$. Our conjecture is supported by convincing numerical evidence and its validity would show that our family of examples includes all axially symmetric Helfrich topological spheres with non-constant mean curvature, which in turn would lead to the complete classification of such surfaces.

\section{Reduced Membrane Equation}\label{s2}

Let $\r^3$ denote the three dimensional Euclidean space whose canonical coordinates are represented by $(x,y,z)$ and let $E_i$, $i=1,2,3$, be the constant unit vector fields in the direction of the coordinate axes.

Let $\Sigma$ be a closed, connected, and oriented surface. If, in addition, $\Sigma$ has genus zero we refer to it as a \emph{topological sphere}. Consider a $\mathcal{C}^2$ immersion $X:\Sigma\longrightarrow\mathbb{R}^3$ of the surface $\Sigma$ in $\mathbb{R}^3$ and denote by $\nu:\Sigma\longrightarrow\mathbb{S}^2$ a choice of unit normal vector field. 

\begin{rem}\label{embedding} Although most of our results are valid only assuming that $X:\Sigma\longrightarrow\r^3$ is an immersion, from now on we will restrict our attention to embedded surfaces since they are physically the most relevant ones. In this case, the normal field $\nu:\Sigma\longrightarrow\mathbb{S}^2$ is chosen to be the outward pointing one. 
\end{rem}

For the immersion $X:\Sigma\longrightarrow\r^3$ the \emph{Helfrich energy} \cite{H} is given by
\begin{equation}\label{energy}
	\mathcal{H}[\Sigma]:=\int_\Sigma (H+c_o)^2\,d\Sigma\,,
\end{equation}
where $H$ is the mean curvature of $X$ and $c_o\in\r$ is a constant. The specific case $c_o=0$ is the classical Willmore energy \cite{W}. It is worth noticing that, for $c_o\neq 0$, the Helfrich energy is not conformally invariant. Moreover, it is not invariant by a change of orientation either, since that will transform the spontaneous curvature as $c_o\mapsto -c_o$.

Let $\psi\in\mathcal{C}_o^\infty(\Sigma)$ be a compactly supported smooth function defined on $\Sigma$ and consider the variation vector field $\delta X=\psi\nu$, which is the linear term of a deformation $X_\epsilon:=X+\epsilon\delta X+\mathcal{O}(\epsilon^2)$. For arbitrary variations $X_\epsilon$ of the immersion $X:\Sigma\longrightarrow\r^3$ with sufficient regularity, we have the following variation formula (for details see Appendix A of \cite{PP1})
\begin{equation}\label{fvf}
	\delta\mathcal{H}[\Sigma]=\int_\Sigma\left(-\nabla H\cdot\nabla\psi+2(H+c_o)\left(H(H-c_o)-K\right)\psi\right)d\Sigma\,,
\end{equation}
where $K$ denotes the Gaussian curvature of the immersion.

\begin{definition}
Critical points (or, equilibria) for the Helfrich energy $\mathcal{H}$ are called Helfrich surfaces.
\end{definition}

For immersions $X:\Sigma\longrightarrow\r^3$ of class $\mathcal{C}^4$, we deduce integrating by parts the first term in \eqref{fvf} and applying the Fundamental Lemma of the Calculus of Variations that the Euler-Lagrange equation associated with the Helfrich energy $\mathcal{H}$ is the fourth order nonlinear elliptic partial differential equation
\begin{equation}\label{EL}
	\Delta H+2\left(H+c_o\right)\left(H(H-c_o)-K\right)=0\,,
\end{equation}
where $\Delta$ denotes the Laplace-Beltrami operator. In other words, Helfrich surfaces of class $\mathcal{C}^4$ are characterized by the classical solutions of \eqref{EL}.

\begin{rem}\label{elliptic} 
	By elliptic regularity (see Theorem 6.6.1 of \cite{M}), it follows that immersions $X:\Sigma\longrightarrow\r^3$ of class $\mathcal{C}^3$ that are Helfrich surfaces are real analytic. Thus, classical solutions of \eqref{EL}.
\end{rem}

Any surface with constant mean curvature $H= -c_o$ trivially satisfies \eqref{EL} and, hence, it is a Helfrich surface. Employing the moving plane method, Alexandrov showed that the only closed and embedded constant mean curvature surfaces in $\r^3$ are round spheres \cite{A}. Moreover, Hopf proved that topological spheres immersed in $\mathbb{R}^3$ with constant mean curvature must be round as well \cite{Ho}. The spheres satisfying $H= -c_o$ are not only Helfrich surfaces, but are also obviously the absolute minimizers of the Helfrich energy $\mathcal{H}$. 

In order to find non-trivial solutions of \eqref{EL}, we consider the second order equation
\begin{equation}\label{RME0}
	H+c_o=-\frac{\nu_3}{z}\,,
\end{equation}
which is known as the \emph{reduced membrane equation} \cite{PP2}. Observe that $\nu_3=\nu\cdot E_3$ is the vertical component of the unit normal $\nu$. We point out here that, when $c_o\neq 0$, this equation is not invariant by a change of orientation. Indeed, changing the orientation takes $c_o\mapsto -c_o$, as is the case of the Helfrich energy itself. However, a useful property of this equation is that it remains invariant after horizontal translations, rotations around the $z$-axis, reflections across vertical planes as well as across the horizontal plane $\{z=0\}$.

\begin{proposition}\label{reflection}
	Let $X:\Sigma\longrightarrow\r^3$ be an immersion satisfying \eqref{RME0}. Then, horizontal translations of $X$, rotations of $X$ about the $z$-axis, reflections of $X$ across any vertical plane, and the reflection of $X$ across the horizontal plane $\{z=0\}$ also satisfy \eqref{RME0}.
\end{proposition}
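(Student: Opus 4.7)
The plan is to treat each of the four symmetries as a case and verify, one factor at a time, that all three ingredients of the reduced membrane equation ($H$, $\nu_3$, and $z$) are compatible with the transformation, so that the equation $H+c_o=-\nu_3/z$ is preserved. In each case, the new immersion is $\tilde X=F\circ X$ for an affine isometry $F$ of $\r^3$, and I will choose the normal of the transformed immersion to be the pushforward $\tilde\nu=dF(\nu)$; this is the geometric choice that keeps the outward-pointing convention when $\Sigma$ is embedded.

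First I would establish, once and for all, the invariance of the mean curvature under any such $F$. Since $dF$ is a constant orthogonal map, the shape operator of $\tilde X$ at $\tilde p=F(p)$ is $\tilde S_{\tilde p}=dF\circ S_p\circ dF^{-1}$, so $\tilde H=\tfrac12\trace\tilde S_{\tilde p}=H$ regardless of whether $F$ is orientation-preserving or orientation-reversing. With this in hand, only the transformation of $\nu_3=\nu\cdot E_3$ and of the height function $z=X\cdot E_3$ need to be tracked.

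For a horizontal translation $F(x,y,z)=(x+a,y+b,z)$ the map $dF$ is the identity and the $z$-coordinate is untouched, so $\tilde\nu_3=\nu_3$ and $\tilde z=z$, and \eqref{RME0} is trivially preserved. For a rotation about the $z$-axis, $dF$ fixes $E_3$, hence $\tilde\nu_3=\langle dF(\nu),E_3\rangle=\langle \nu,dF^{-1}(E_3)\rangle=\nu_3$; the $z$-coordinate is again preserved, and the equation follows. For a reflection across a vertical plane, say $\{y=0\}$, one has $dF(E_3)=E_3$, so the same argument gives $\tilde\nu_3=\nu_3$ and $\tilde z=z$. Finally, for the reflection across $\{z=0\}$ one has $dF(E_3)=-E_3$, giving $\tilde\nu_3=-\nu_3$ and $\tilde z=-z$; the two sign flips cancel in the ratio $\tilde\nu_3/\tilde z=\nu_3/z$, and together with $\tilde H=H$ this gives back \eqref{RME0}.

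The only subtle point—and the one I would flag as the main source of error—is the sign of the mean curvature under orientation-reversing isometries. A careless computation that fixes the orientation of the abstract surface $\Sigma$ (rather than the geometric outward normal in $\r^3$) produces $\tilde H=-H$ together with $\tilde\nu_3=-\nu_3$, which only works out when $c_o=0$. The cleanest way to avoid this is to state at the outset that one uses the pushforward normal $\tilde\nu=dF(\nu)$ throughout, so that the shape-operator conjugation argument above applies uniformly and $\tilde H=H$ in every case. Once that convention is fixed, each of the four verifications is a one-line check.
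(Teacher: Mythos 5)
Your proposal is correct and follows essentially the same route as the paper: a case-by-case check of how $H$, $\nu_3$, and $z$ transform under each isometry, with the two sign flips cancelling for the reflection across $\{z=0\}$. Your explicit fixing of the pushforward normal $\tilde\nu=dF(\nu)$ and the shape-operator conjugation argument make precise the convention that the paper leaves implicit when it asserts that $H$ is invariant under the orientation-reversing reflections.
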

\begin{proof} Assume that the immersion $X:\Sigma\longrightarrow\r^3$ satisfies \eqref{RME0} for a fixed constant $c_o\in\r$. The mean curvature $H$ stays invariant after any of the mentioned isometries. Moreover, it is clear that horizontal translations, rotations around the $z$-axis and reflections across vertical planes also preserve both $z$ and $\nu_3$. Hence, \eqref{RME0} remains invariant. We now consider the reflection of $X$ across the plane $\{z=0\}$. Under this transformation, $H\mapsto H$, $\nu_3\mapsto -\nu_3$ and $z\mapsto -z$. Thus, equation \eqref{RME0} is satisfied for the same $c_o\in\r$.
\end{proof}

\begin{rem}\label{cylinder} If $c_o=0$, round spheres of arbitrary radii satisfy the reduced membrane equation \eqref{RME0}. If $c_o\neq 0$, the right circular cylinder of radius $1/(2\lvert c_o\rvert)$ satisfies \eqref{RME0} (for a suitable orientation depending on the sign of $c_o$). Although the right circular cylinder is not closed, it is an axially symmetric example of Helfrich surface that will arise later on. Other examples of non-closed Helfrich surfaces can be obtained by constructing a right cylinder $X(s,t)=\gamma(s)+t E_3$, where $\gamma(s)\subset\{z=0\}$ is any elastic curve circular at rest.
\end{rem}

The reduced membrane equation \eqref{RME0} is the Euler-Lagrange equation for the action
$$\mathcal{G}[\Sigma]:=\int_\Sigma \frac{1}{z^2}\,d\Sigma-2c_o\int_\Omega \frac{1}{z^2}\,dV\,,$$
where $\Omega\subset\r^3$ is the volume enclosed by the surface $\Sigma$. For any part of the surface which lies in a half-space $\{z>0\}$ or $\{z<0\}$ of $\r^3$, this functional can be expressed as
$$\mathcal{G}[\Sigma]=\widetilde{\mathcal{A}}[\Sigma]-2c_o\int_{\widetilde{\Omega}}\lvert z\rvert\,d\widetilde{V}=\widetilde{\mathcal{A}}[\Sigma]-2c_o\,\widetilde{\mathcal{U}}[\Sigma]\,,$$
where $\widetilde{\mathcal{A}}$ denotes the area of $\Sigma$ regarded, after a conformal change of metric, as a surface in the hyperbolic space $\h^3$, $\widetilde{\Omega}$ denotes the hyperbolic volume enclosed by $\Sigma$, and $\widetilde{\mathcal{U}}$ is the gravitational potential energy of $\Sigma$, considered as a surface in $\h^3$. Consequently, the solutions of \eqref{RME0} can be viewed as minimal surfaces if $c_o=0$ or, if $c_o\ne 0$, as capillary surfaces with  a constant vertical gravitational force  in the hyperbolic space. The axially symmetric case gives rise to  hyperbolic versions of sessile or pendant drops. 
 
In Proposition 4.1 of \cite{PP2}, it was shown that if a \emph{sufficiently regular} immersion $X:\Sigma\longrightarrow\r^3$ has mean curvature satisfying \eqref{RME0}, then \eqref{EL} also holds and, consequently, the immersion is critical for the Helfrich energy $\mathcal{H}$. In the following theorem we show that $\mathcal{C}^3$ regularity is enough to obtain that the surface is a Helfrich surface and give necessary conditions for the existence of closed surfaces satisfying the reduced membrane equation. Indeed, we will see that these surfaces cannot be entirely contained in either of the half-spaces $\{z>0\}$ or $\{z<0\}$. (Although
throughout this paper we are mainly focusing on closed genus zero surfaces, this result is valid for general closed surfaces, regardless of their genera.)

\begin{theorem}\label{z=0}
	Let $\Sigma$ be a closed surface and $X:\Sigma\longrightarrow\r^3$ a $\mathcal{C}^3$ immersion satisfying the reduced membrane equation \eqref{RME0}. Then, $X(\Sigma)$ is a real analytic Helfrich surface which intersects the plane $\{z=0\}$ orthogonally at closed geodesics. Moreover, if $c_o\neq 0$, the rescaling condition
	$$\int_\Sigma\left(H+c_o\right)d\Sigma=0\,,$$
	holds.
\end{theorem}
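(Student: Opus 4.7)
The plan is to establish three facts in succession: that $X(\Sigma)$ is a Helfrich surface and therefore real analytic; that $\gamma:=\Sigma\cap\{z=0\}$ is a nonempty disjoint union of round geodesic circles along which $\Sigma$ is perpendicular to $\{z=0\}$; and that the rescaling integral vanishes when $c_o\ne 0$.

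For the Helfrich property, I would rewrite \eqref{RME0} as $\nu_3=-z(H+c_o)$, so that in local coordinates $(u,v)$ on $\Sigma$ with $\gamma=\{v=0\}$ Hadamard's lemma gives $z=v\,\beta(u,v)$ and $\nu_3=v\,\alpha(u,v)$, with $\beta\ne 0$ on $\gamma$ once $|\nabla_\Sigma z|^2=1-\nu_3^2=1$ there is established. Hence $\nu_3/z=\alpha/\beta$ is $\mathcal{C}^1$ across $\gamma$, $H\in\mathcal{C}^1(\Sigma)$, and the integrand of \eqref{fvf} is pointwise well-defined. Adapting the computation of Proposition~4.1 of \cite{PP2}, the reduced membrane equation forces $\delta\mathcal{H}[\Sigma]=0$ for every $\psi\in\mathcal{C}_o^\infty(\Sigma)$, so $\Sigma$ solves \eqref{EL} weakly; elliptic regularity (Remark \ref{elliptic}) promotes $X$ to a real analytic classical Helfrich immersion.

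For the geometry of $\gamma$, I would use on the closed $\Sigma$ the identities $\int_\Sigma\nu_3\,d\Sigma=0$ (Stokes applied to $X^*(x\,dy)$) and $\int_\Sigma H\nu_3\,d\Sigma=0$ (from $\Delta_\Sigma z=2H\nu_3$). Multiplying \eqref{RME0} by $\nu_3$ and integrating,
\[0=\int_\Sigma(H+c_o)\nu_3\,d\Sigma=-\int_\Sigma\frac{\nu_3^2}{z}\,d\Sigma.\]
Were $\Sigma$ contained in $\{z>0\}$ or $\{z<0\}$, the integrand would have constant sign and could vanish only if $\nu_3\equiv 0$, impossible on a closed surface; so $\gamma\ne\varnothing$. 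At every $p\in\gamma$, $\nu_3(p)=-z(p)(H(p)+c_o)=0$, so $\nu$ is horizontal, $T_p\Sigma$ is vertical (orthogonality), and $|\nabla_\Sigma z|=1$, making $\gamma$ a smooth compact $1$-submanifold, hence a disjoint union of topological circles. Along $\gamma$ the second derivative $\gamma''$ is horizontal while the conormal to $\gamma$ in $\Sigma$ is vertical, so the geodesic curvature of $\gamma$ vanishes. The identity $d\nu_3=0$ on $\gamma$ combined with $d\nu(T)\perp\nu$ shows $d\nu(T)\parallel T$, so the tangent $T$ to $\gamma$ and $E_3$ are principal directions on $\gamma$; a L'H\^opital limit of \eqref{RME0} in the vertical direction then yields $\kappa_2=H+c_o$ and hence $\kappa_1=H-c_o$ on $\gamma$. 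Finally, the Codazzi equations in principal coordinates near $\gamma$ (normalized so that $|X_v|\equiv 1$ along $\gamma$), combined with \eqref{RME0}, give $(\kappa_1)_u\equiv 0$, so $\gamma''=\kappa_1\nu$ has constant length and $\gamma$ is a round circle.

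For the rescaling condition with $c_o\ne 0$, I would use the dilation $X_\lambda=\lambda X$: since $H_\lambda=H/\lambda$ and $d\Sigma_\lambda=\lambda^2\,d\Sigma$, one obtains $\mathcal{H}[\lambda\Sigma]=\int_\Sigma(H+\lambda c_o)^2\,d\Sigma$, so
\[\frac{d}{d\lambda}\bigg|_{\lambda=1}\mathcal{H}[\lambda\Sigma]=2c_o\int_\Sigma(H+c_o)\,d\Sigma.\]
This dilation is the normal variation with $\psi=X\cdot\nu$, and by Helfrich criticality $\delta\mathcal{H}[\Sigma]=0$; dividing by $2c_o$ yields $\int_\Sigma(H+c_o)\,d\Sigma=0$. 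The principal obstacle is the Codazzi step: \eqref{RME0} alone yields only the pointwise identity $\kappa_1-\kappa_2=-2c_o$ on $\gamma$, so genuine constancy of $\kappa_1$ requires additional control of first-fundamental-form derivatives along $\gamma$ beyond what the reduced membrane equation gives directly.
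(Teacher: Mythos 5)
Most of your outline tracks the paper's own proof: criticality is obtained by verifying that the first variation \eqref{fvf} vanishes for all test functions $\psi$ (the paper carries out this integration by parts explicitly in its Lemma \ref{n1}, using the identity for $\Delta\nu_3$ coming from translation invariance, rather than citing Proposition 4.1 of \cite{PP2}); nonemptiness of $X(\Sigma)\cap\{z=0\}$ follows from exactly your integral identity $0=\int_\Sigma(H+c_o)\nu_3\,d\Sigma=-\int_\Sigma\nu_3^2/z\,d\Sigma$; orthogonality and the geodesic property are argued as you do; and the rescaling condition is the same dilation computation. The Hadamard-lemma discussion is superfluous once you note that a $\mathcal{C}^3$ immersion already has $H\in\mathcal{C}^1$, which is all the weak formulation needs.

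The genuine gap is the one you flag yourself: constancy of the curvature along a component $C$ of $X(\Sigma)\cap\{z=0\}$, which you try to extract from the Codazzi equations and cannot. The paper closes this without Codazzi and without any control of the first fundamental form. Differentiating \eqref{RME0} on the surface gives, along $C$ (where by Joachimsthal $\nabla z$ is a principal direction, so $\nabla\nu_3=d\nu(\nabla z)=-\kappa_1\nabla z$ with $\kappa_1$ the vertical principal curvature),
$$\nabla\left(H+c_o\right)=\frac{1}{z}\left(\kappa_1-H-c_o\right)\nabla z\,.$$
Since the surface is already known to be real analytic, $\nabla(H+c_o)$ is finite while $\nabla z\neq 0$ on $C$, so the scalar coefficient $(\kappa_1-H-c_o)/z$ has a finite limit at $z=0$; this forces $\kappa_1=H+c_o$ on $C$ (your L'H\^opital step), but it also shows that $\nabla(H+c_o)$ is \emph{parallel to} $\nabla z$, hence orthogonal to the tangent $T$ of $C$. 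Therefore $H_\sigma=\nabla(H+c_o)\cdot T=\left[(\kappa_1-H-c_o)/z\right]z_\sigma=0$ because $z_\sigma=0$ along $C$, so $H$ is constant on $C$, the normal curvature $\kappa_n=2H-\kappa_1=H-c_o$ is constant, and $\kappa^2=\kappa_g^2+\kappa_n^2=(H-c_o)^2$ is constant, making $C$ a round circle. The observation you are missing is precisely that the tangential component of $\nabla H$ along $C$ is controlled directly by the differentiated reduced membrane equation; no extra information about the metric along $\gamma$ is required.
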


\begin{rem} The previous result also holds for the Willmore case ($c_o=0$). If $\Sigma$ is a topological sphere we can conclude from Theorem 1.1 of \cite{P} that the surface is a round sphere. However, if no restriction on the genus of $\Sigma$ is made, then there exist examples of nontotally umbilical Willmore surfaces with non-constant mean curvature \cite{BB}.
\end{rem}

The proof of Theorem \ref{z=0} will follow from two lemmas. For these lemmas we let $\Sigma$ be a closed surface of arbitrary genus and $X:\Sigma\longrightarrow\mathbb{R}^3$ a $\mathcal{C}^3$ immersion satisfying \eqref{RME0}.

We first show that the immersion $X:\Sigma\longrightarrow\mathbb{R}^3$ is critical for the Helfrich energy $\mathcal{H}$.

\begin{lemma}\label{n1} A $\mathcal{C}^3$ immersion $X:\Sigma\longrightarrow\mathbb{R}^3$ satisfying \eqref{RME0} is critical for the Helfrich energy $\mathcal{H}$. In particular, $X$ is real analytic.
\end{lemma}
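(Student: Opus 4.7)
The plan is to derive the Euler--Lagrange equation \eqref{EL} from \eqref{RME0} on the open set $\Sigma^*:=\{p\in\Sigma:z(p)\ne 0\}$, upgrade this to global criticality via the first variation formula \eqref{fvf}, and then invoke Remark \ref{elliptic} to conclude real analyticity. Two preparatory observations are needed. Since $X\in\mathcal{C}^3$, we have $\nu\in\mathcal{C}^2$ and, \emph{a priori}, only $H\in\mathcal{C}^1(\Sigma)$; but on $\Sigma^*$ the identity $H=-\nu_3/z-c_o$ upgrades $H$ to class $\mathcal{C}^2$, so $\Delta H$ is classically defined there. Moreover, passing to the limit in $z(H+c_o)+\nu_3=0$ at points where $z=0$ forces $\nu_3=0$ on $\{z=0\}$, which combined with the surface identity $|\nabla z|^2=1-\nu_3^2$ gives $|\nabla z|=1\ne 0$ on $\{z=0\}$. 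By the implicit function theorem, $\{z=0\}$ is a $\mathcal{C}^3$ one-dimensional submanifold of $\Sigma$, hence of two-dimensional measure zero.

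On $\Sigma^*$, the key step is to apply the Laplace--Beltrami operator to the equivalent form $z(H+c_o)+\nu_3=0$ of \eqref{RME0}, invoking the standard identities $\Delta z=2H\nu_3$, $\Delta\nu_3=-|A|^2\nu_3-2\nabla z\cdot\nabla H$ (the third component of $\Delta\nu=-|A|^2\nu-2\nabla H$), and $|A|^2=4H^2-2K$. The two cross terms $2\nabla z\cdot\nabla H$ cancel, and after substituting $\nu_3=-z(H+c_o)$ back in and dividing by the nonzero factor $z$, one recovers \eqref{EL} pointwise on $\Sigma^*$. This is essentially the content of Proposition 4.1 of \cite{PP2}, and the computation uses no more than $H\in\mathcal{C}^2(\Sigma^*)$ together with $\nu\in\mathcal{C}^2(\Sigma)$, both of which are in hand.

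To promote pointwise \eqref{EL} on $\Sigma^*$ to global criticality, test \eqref{fvf} against an arbitrary $\psi\in\mathcal{C}^\infty(\Sigma)$ and set $F:=2(H+c_o)(H(H-c_o)-K)\in\mathcal{C}^0(\Sigma)$. Using that $\{z=0\}$ has area zero while $\nabla H$ and $F$ are continuous on $\Sigma$, dominated convergence reduces $\delta\mathcal{H}[\Sigma]$ to $\lim_{\epsilon\to 0^+}\int_{\{|z|>\epsilon\}}(-\nabla H\cdot\nabla\psi+F\psi)\,d\Sigma$. Since $H\in\mathcal{C}^2$ on this open set, integration by parts is legal: the bulk term $\int_{\{|z|>\epsilon\}}(\Delta H+F)\psi\,d\Sigma$ vanishes by the previous paragraph, while the boundary contributions along $\{z=\epsilon\}$ and $\{z=-\epsilon\}$ carry opposite outward conormals and, by the uniform continuity of $\nabla H$ and $\nabla z$ on the compact surface $\Sigma$, cancel in the limit $\epsilon\to 0^+$. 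Hence $\delta\mathcal{H}[\Sigma]=0$ and $X$ is a Helfrich surface; real analyticity then follows from Remark \ref{elliptic}. The most delicate step is this final cancellation: it relies on $\nabla H$ being continuous across $\{z=0\}$ (which requires $X\in\mathcal{C}^3$ globally, not merely on $\Sigma^*$), without which a distributional singularity of $\Delta H$ supported on $\{z=0\}$ could in principle obstruct the argument.
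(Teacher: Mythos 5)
Your proposal is correct, and it reaches the conclusion by a genuinely different route than the paper. You first observe that \eqref{RME0} upgrades $H$ to class $\mathcal{C}^2$ on $\{z\neq 0\}$ and derive the Euler--Lagrange equation \eqref{EL} classically there (essentially Proposition 4.1 of \cite{PP2}), then excise a strip $\{|z|\leq\epsilon\}$ around the singular level set, integrate by parts on the complement, and let $\epsilon\to 0^+$, using that $\{z=0\}$ is a regular level curve of measure zero and that the flux terms $\oint\partial_n H\,\psi\,d\sigma$ along $\{z=\pm\epsilon\}$ cancel in the limit because $\nabla H$ is continuous across $\{z=0\}$. The paper instead works globally from the start: it substitutes $\nabla H=\nabla(\nu_3/z)$ into the first variation formula \eqref{fvf}, integrates by parts over all of $\Sigma$, and eliminates $\Delta\nu_3$ via the translation-invariance identity $\Delta\nu_3=-\lVert d\nu\rVert^2\nu_3-2\nabla H\cdot\nabla z$, showing directly that $X$ is a weak solution of \eqref{EL} without ever isolating the set $\{z=0\}$. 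Your argument has the virtue of making explicit exactly where global $\mathcal{C}^3$ regularity enters --- the cancellation of the two conormal derivatives of $H$ along $\{z=0\}$ --- which foreshadows the obstruction quantified later in Corollary \ref{H'} and Lemma \ref{2}; the cost is the extra bookkeeping about regular level sets and the convergence of the boundary integrals. The paper's computation is shorter and avoids the excision, at the price of a somewhat delicate (and only briefly justified) term-by-term integration by parts of $\nabla\nu_3/z$ and $\nu_3\nabla z/z^2$, which are individually singular at $z=0$ even though their difference is continuous.
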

\begin{proof} We will show that the $\mathcal{C}^3$ immersion $X:\Sigma\longrightarrow\mathbb{R}^3$ is a weak solution of \eqref{EL}, by checking that \eqref{fvf} vanishes. For that, we use \eqref{RME0} to compute
$$\int_\Sigma -\nabla H\cdot\nabla\psi\,d\Sigma=\int_\Sigma \nabla\left(\frac{\nu_3}{z}\right)\cdot\nabla\psi\,d\Sigma=\int_\Sigma\left(\frac{\nabla\nu_3\cdot \nabla\psi}{z}-\frac{\nu_3 \nabla z\cdot\nabla\psi}{z^2}\right)d\Sigma\,,$$
where $\psi\in\mathcal{C}_o^\infty(\Sigma)$ is an arbitrary compactly supported smooth function defined on $\Sigma$.

Now, both terms above have sufficient regularity to be integrated by parts. Thus,
\begin{eqnarray}\label{*}
	\int_\Sigma -\nabla H\cdot\nabla\psi\,d\Sigma&=&\int_\Sigma\left(\nabla\cdot\left(\frac{\nu_3\nabla z}{z^2}\right)-\nabla\cdot\left(\frac{\nabla \nu_3}{z}\right)\right)\psi\,d\Sigma\nonumber\\
	&=&\int_\Sigma \left(2\frac{\nabla\nu_3\cdot\nabla z}{z^2}-2\frac{\nu_3\lVert\nabla z\rVert^2}{z^3}+\frac{\nu_3\Delta z}{z^2}-\frac{\Delta \nu_3}{z}\right)\psi\,d\Sigma\,.
\end{eqnarray}
Consider a variation field $\delta X=E_3=\nabla z+\nu_3\nu$. This generates a variation of the immersion $X:\Sigma\longrightarrow\mathbb{R}^3$ through translations and, hence, its mean curvature $H$ remains invariant. Therefore, we get (for details on the computation of $\delta H$, see Appendix A of \cite{PP1})
$$0=\delta H=\frac{1}{2}\Delta\nu_3+\frac{1}{2}\lVert d\nu\rVert^2\nu_3+\nabla H\cdot \nabla z\,.$$
We then deduce that
\begin{eqnarray}\label{nu3}
	\Delta\nu_3&=&-2\nabla H\cdot\nabla z-\lVert d\nu\rVert^2\nu_3=2\nabla\left(\frac{\nu_3}{z}\right)\cdot\nabla z-\left(4H^2-2K\right)\nu_3\nonumber\\
	&=&2\frac{\nabla\nu_3\cdot\nabla z}{z}-2\frac{\nu_3\lVert \nabla z\rVert^2}{z^2}-\left(4H^2-2K\right)\nu_3\,,
\end{eqnarray}
where we have used that $\lVert d\nu\rVert^2=4H^2-2K$ and \eqref{RME0}. Combining \eqref{*} and \eqref{nu3} we conclude with
\begin{eqnarray}\label{**}
	\int_\Sigma-\nabla H\cdot\nabla \psi\,d\Sigma&=&\int_\Sigma \left((4H^2-2K)\frac{\nu_3}{z}+\frac{\nu_3\Delta z}{z^2}\right)\psi\,d\Sigma\nonumber\\
	&=&\int_\Sigma -2(H+c_o)\left(2H^2-K-H(H+c_o)\right)\psi\,d\Sigma\,.
\end{eqnarray}
In the last equality we have used twice that \eqref{RME0} holds on $\Sigma$ and that the Laplacian, with respect to the metric induced on $\Sigma$, of the height function $z=X\cdot E_3$ is
\begin{equation}\label{Dz}
	\Delta z=\Delta \left(X\cdot E_3\right)=\Delta X\cdot E_3=2H\nu\cdot E_3=2H\nu_3\,.
\end{equation}
Consequently, substituting \eqref{**} in \eqref{fvf} we have 
$$\delta\mathcal{H}[\Sigma]=\int_\Sigma 2(H+c_o)\left(-2H^2+K+H(H+c_o)+H(H-c_o)-K\right)\psi\,d\Sigma=0\,.$$
Hence, $X:\Sigma\longrightarrow\mathbb{R}^3$ is a Helfrich surface. By elliptic regularity \cite{M} (c.f., Remark \ref{elliptic}), the immersion $X$ is not only of class $\mathcal{C}^3$ but also real analytic.
\end{proof}

We next show that $X(\Sigma)$ intersects the plane $\{z=0\}$ orthogonally. (The following lemma will necessitate the immersion $X:\Sigma\longrightarrow\mathbb{R}^3$ to be an embedding, c.f., Remark \ref{embedding}.)

\begin{lemma}\label{n2} The closed surface $X(\Sigma)$ intersects the plane $\{z=0\}$ orthogonally and the connected components of the intersection $X(\Sigma)\cap\{z=0\}$ are closed geodesics of $\Sigma$.
\end{lemma}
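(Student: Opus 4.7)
The key observation driving the proof is that, after multiplying the reduced membrane equation \eqref{RME0} through by $z$, one obtains the smooth pointwise identity
$$\nu_3 + (H+c_o)\,z = 0 \qquad \text{on } \Sigma.$$
Both terms are continuous on $\Sigma$, so at any point $p$ with $z(p)=0$, the product $(H+c_o)\,z$ vanishes, forcing $\nu_3(p)=0$. Geometrically, the surface normal $\nu$ is horizontal at every intersection point, or equivalently $E_3$ lies in the tangent plane $T_p\Sigma$, which is precisely the orthogonality of $\Sigma$ and $\{z=0\}$.

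I would next verify that the intersection is cut out as a smooth one-dimensional submanifold by checking that $0$ is a regular value of the height function $z\colon\Sigma\to\r$. Writing $\nabla z = E_3-\nu_3\,\nu$ for the tangential part of $E_3$, the previous step gives $\nabla z = E_3$ of norm one wherever $\nu_3=0$. Combining the implicit function theorem with the compactness of $\Sigma$ then realizes $X(\Sigma)\cap\{z=0\}$ as a finite disjoint union of smoothly embedded closed curves.

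For the geodesic claim, I would parametrize such a component $\gamma$ by arc length with unit tangent $T$. Since $\gamma$ lies in $\{z=0\}$, both $T$ and $T'$ are horizontal, and $T'\perp T$ because $|T|\equiv 1$. By the orthogonality established above, the tangent plane $T_p\Sigma$ along $\gamma$ is spanned by $T$ and $E_3$, so the only tangent direction in $T_p\Sigma$ perpendicular to $T$ is $\pm E_3$. Since $T'\cdot E_3 = 0$ by horizontality, the $\Sigma$-tangential component of $T'$ vanishes, so the geodesic curvature is identically zero and $\gamma$ is a closed geodesic of $\Sigma$.

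The step I expect to be the main obstacle is ruling out the possibility that $\Sigma$ lies entirely in one of the open half-spaces, which is needed for the intersection to be nonempty. By the reflection symmetry in Proposition \ref{reflection}, it suffices to assume $\Sigma\subset\{z>0\}$. Applying the divergence theorem to the vector field $E_3/z$ on the enclosed region $\Omega$ and then using \eqref{RME0} on the boundary produces
$$\int_\Sigma (H+c_o)\,d\Sigma \;=\; \int_\Omega \frac{1}{z^2}\,dV \;>\;0.$$
When $c_o\neq 0$, Lemma \ref{n1} makes $X$ critical for $\mathcal{H}$, so the scaling variation $\delta X = X$ yields $\delta\mathcal{H} = 2c_o\int_\Sigma(H+c_o)\,d\Sigma = 0$, contradicting the strict positivity above. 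When $c_o=0$, the identity $\Delta z = 2H\nu_3 = -2\nu_3^2/z\le 0$ makes $z$ superharmonic on the closed surface $\Sigma$ and hence constant; then $\nabla z = 0$ forces $\nu=\pm E_3$, which would confine $\Sigma$ to a horizontal plane, incompatible with $\Sigma$ being a closed surface in $\r^3$.
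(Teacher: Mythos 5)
Your argument is correct and establishes all three assertions of the lemma; the orthogonality step (continuity of $H$ forces $\nu_3=0$ where $z=0$) and the geodesic step (the conormal along the intersection is $\pm E_3$, while $T_\sigma$ is horizontal, so $\kappa_g=T_\sigma\cdot n=0$) coincide with the paper's proof, and your implicit-function-theorem verification that the intersection is a finite union of embedded closed curves is a worthwhile addition that the paper leaves implicit. Where you genuinely diverge is in ruling out $X(\Sigma)\cap\{z=0\}=\emptyset$. The paper integrates $\Delta z=2H\nu_3=-2\nu_3^2/z-2c_o\nu_3$ over the closed surface: the left side integrates to zero, the $c_o$-term vanishes by the divergence theorem applied to $E_3$, and $\int_\Sigma\nu_3^2/z\,d\Sigma\neq 0$ on a half-space-confined closed surface gives the contradiction in one stroke, uniformly in $c_o$. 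You instead split into cases: for $c_o\neq 0$ you invoke criticality (Lemma \ref{n1}) plus the rescaling variation to force $\int_\Sigma(H+c_o)\,d\Sigma=0$, against the strict positivity obtained from the divergence theorem applied to $E_3/z$ on $\Omega$; for $c_o=0$ you use superharmonicity of $z$. Both routes are sound (your $E_3/z$ computation does require observing that $\overline{\Omega}\subset\{z>0\}$, which follows from $\partial\Omega=X(\Sigma)\subset\{z\geq\delta\}$ for some $\delta>0$), but the paper's version is more elementary --- it needs neither Lemma \ref{n1} nor the case split --- whereas yours has the merit of already exhibiting the identity $\int_\Sigma(H+c_o)\,d\Sigma=\int_\Omega z^{-2}\,dV$ that underlies the rescaling condition appearing later in Theorem \ref{z=0}.
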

\begin{proof}
First we show that the surface intersects the plane $\{z=0\}$. To the contrary, suppose that $X(\Sigma)\cap\{z=0\}=\emptyset$. That is, the surface $X(\Sigma)$ is contained in its entirety in the upper (or, lower) half-space $\{z>0\}$ (respectively, $\{z<0\}$). From \eqref{Dz} and \eqref{RME0}, we compute 
$$\Delta z=2H\nu_3=2\left(-\frac{\nu_3}{z}-c_o\right)\nu_3=-2\frac{\nu_3^2}{z}-2c_o\nu_3\,.$$
Integrating this over the closed surface $\Sigma$, we get
\begin{equation}\label{idz}
	0=\int_\Sigma \Delta z\,d\Sigma=-2\int_\Sigma \frac{\nu_3^2}{z}\,d\Sigma-2c_o\int_\Sigma \nu_3\,d\Sigma\,.
\end{equation}
For the second integral on the right-hand side, we apply the divergence theorem. Denoting by $\Omega\subset\r^3$ the open domain representing the volume enclosed by $\Sigma$, we have
$$\int_\Sigma \nu_3\,d\Sigma=\int_\Sigma E_3\cdot\nu\,d\Sigma=\int_\Omega \overline{\nabla}\cdot E_3\,dV=0\,,$$
where $\overline{\nabla}\cdot$ denotes the divergence operator in $\mathbb{R}^3$. Consequently, \eqref{idz} reduces to 
$$0=-2\int_\Sigma \frac{\nu_3^2}{z}\,d\Sigma\,,$$
which is a contradiction because by assumption $z$ is positive (or, negative) everywhere on $\Sigma$ and $\nu_3$ is not identically zero for a closed surface.

Once we have showed that $X(\Sigma)\cap\{z=0\}\neq \emptyset$, we will check that this intersection is orthogonal. This is a consequence of the regularity. Since the immersion $X$ is $\mathcal{C}^3$ (indeed, we have proved in Lemma \ref{n1} that it is real analytic), its mean curvature $H$ is well defined everywhere, and so is the left-hand side of \eqref{RME0}. Hence, we deduce that, necessarily, $\nu_3=0$ at all the points where $z=0$, or otherwise we would reach a contradiction. That is, the intersection is orthogonal.

Finally, we show that as a consequence of this intersection being orthogonal, the connected components of $X(\Sigma)\cap\{z=0\}$ are geodesics of $\Sigma$. Denote by $C$ any of these connected components and assume it is parameterized by its arc length parameter $\sigma$. The vector field $T(\sigma):=C_\sigma(\sigma)$ denotes the unit tangent to $C$, which clearly lies in the plane $\{z=0\}$. Observe that the normal vector field $\nu$ along $C$ also lies in $\{z=0\}$ due to the orthogonality of the intersection. Hence, from the definition of the geodesic curvature $\kappa_g$ of $C$ (see, for instance, Section 2 of \cite{PP2}), we then compute
$$\kappa_g=T_\sigma\cdot n=0\,,$$
since the conormal $n:=T\times \nu$ to $C$ is orthogonal to the plane $\{z=0\}$ and $C$ is a planar curve. This means that $C$ is a geodesic of $\Sigma$.
\end{proof}

We can now proceed with the proof of Theorem \ref{z=0}.
\\

\noindent{\emph{Proof of Theorem \ref{z=0}.} Let $X:\Sigma\longrightarrow\mathbb{R}^3$ be a $\mathcal{C}^3$ immersion of a closed surface satisfying \eqref{RME0}. From Lemma \ref{n1}, $X$ is a Helfrich surface, while Lemma \ref{n2} shows that $X(\Sigma)$ intersects the plane $\{z=0\}$ orthogonally at closed geodesics.
	
It only remains to prove the rescaling condition. This equality arises from a rescaling argument of the Helfrich energy $\mathcal{H}$ (analogous to that of Proposition 2.1 of \cite{PP1}). For a rescaling $R\Sigma$, $R>0$, the Helfrich energy $\mathcal{H}$ is given by
$$\mathcal{H}[R\Sigma]=\int_\Sigma \left(\frac{H}{R}+c_o\right)^2R^2\,d\Sigma=\int_\Sigma\left(H+c_o R\right)^2\,d\Sigma\,.$$
Differentiating this expression with respect to $R$ at the critical immersion $X:\Sigma\longrightarrow\r^3$ characterized by $R=1$ then yields
$$0=2c_o\int_\Sigma\left(H+c_o\right)d\Sigma\,.$$
Since $c_o\neq 0$, we get the integral condition of the statement. This finishes the proof. \hfill$\square$
\\

To finish this section, we show that $\mathcal{C}^3$ closed solutions of \eqref{RME0} satisfying an extra condition at $\{z=0\}$ are axially symmetric having the $z$-axis as the axis of rotation.

\begin{proposition}\label{n4} Let $\Sigma$ be a closed surface and $X:\Sigma\longrightarrow\r^3$ a $\mathcal{C}^3$ immersion satisfying the reduced membrane equation \eqref{RME0}. If the mean curvature $H$ and its derivative in the conormal direction are constant along one connected component of $X(\Sigma)\cap\{z=0\}$, then the surface is axially symmetric and its axis of rotation is the $z$-axis.
\end{proposition}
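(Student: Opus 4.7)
\emph{Proof plan.} The strategy is to show that, after a preliminary horizontal translation, the surface $X(\Sigma)$ is invariant under every rotation $\rho_\theta$ about the $z$-axis. Let $C$ denote the connected component of $X(\Sigma)\cap\{z=0\}$ along which the conormal derivative of $H$ is constant; by Lemma \ref{n3}, $C$ is a circle in the plane $\{z=0\}$. Since horizontal translations preserve \eqref{RME0} (Proposition \ref{reflection}), I may translate so that $C$ is centered on the $z$-axis, whence $\rho_\theta(C)=C$ for every $\theta\in\r$. The Helfrich equation \eqref{EL} is invariant under rigid motions and $X(\Sigma)$ is a real analytic Helfrich surface by Lemma \ref{n1}, so each rotated copy $\rho_\theta(X(\Sigma))$ is itself a real analytic Helfrich surface containing $C$. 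The goal is then to prove $X(\Sigma)=\rho_\theta(X(\Sigma))$ for every $\theta$.

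I plan to do this via the analytic Cauchy--Kowalevski (or Holmgren) uniqueness theorem applied to \eqref{EL}, by checking that $X(\Sigma)$ and $\rho_\theta(X(\Sigma))$ share the same Cauchy data of orders zero through three along $C$. The zeroth and first order data agree because both surfaces contain $C$ and, by Lemma \ref{n2}, are orthogonal to $\{z=0\}$ along $C$; consequently their common tangent plane along $C$ is the vertical plane containing $T$, and the unit conormal is $n=\pm E_3$, manifestly invariant under $\rho_\theta$. For the second order data, Joachimsthal's theorem yields $\tau_g=0$ along $C$, and Lemma \ref{n3} shows that the principal curvatures $\kappa_1=H-c_o$ and $\kappa_2=H+c_o$ are constant along $C$; hence the full second fundamental form along $C$ is rotationally invariant.

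The third order data is where the hypothesis enters. Both $\kappa_1-H-c_o$ and $z$ vanish on $C$, so evaluating \eqref{needed} on $C$ via L'H\^opital, using $\nabla z|_C=E_3$ and $n=\pm E_3$, produces the relations
\[
\partial_n\kappa_1=2H_n,\qquad \partial_n\kappa_2=0
\]
along $C$. The hypothesis that $H_n$ is constant along $C$ therefore forces the first conormal derivative of the second fundamental form to be constant along $C$, and hence invariant under $\rho_\theta$. Parametrizing both $X(\Sigma)$ and $\rho_\theta(X(\Sigma))$ locally near $C$ as real analytic graphs over the common vertical tangent plane, \eqref{EL} becomes a fourth order analytic elliptic scalar PDE, non-characteristic along $C$; Cauchy--Kowalevski yields agreement of the two graphs in a two-sided neighborhood of $C$, and real analyticity together with connectedness propagate this to global equality $X(\Sigma)=\rho_\theta(X(\Sigma))$. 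Since $\theta$ is arbitrary, $X(\Sigma)$ is axially symmetric about the $z$-axis.

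The main obstacle is the translation between geometric data ($\kappa_i$, $\partial_n\kappa_i$, and derivatives of $\nu$) and the analytic Cauchy data $(f,f_z,f_{zz},f_{zzz})$ of the graph formulation of \eqref{EL}: one must verify that the four orders of rotationally invariant geometric information listed above suffice to determine all four pieces of scalar Cauchy data. Once this identification is made, the Cauchy--Kowalevski step is standard.
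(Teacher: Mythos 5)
Your argument is correct and follows essentially the same route as the paper: the paper likewise reduces the statement to the facts that $C$ is a circle along which $H$ is constant and $\partial_n H$ is constant, and then invokes Lemma 4.1 of \cite{PP2}, whose content is precisely the Cauchy--Kowalevski/analytic-continuation uniqueness argument you sketch, so you are in effect supplying the proof of the cited lemma. The only slip is notational: with your labeling $\kappa_2=H+c_o$ for the conormal principal curvature, the relations obtained from \eqref{needed} should read $\partial_n\kappa_2=2H_n$ and $\partial_n\kappa_1=0$, which does not affect the conclusion that the third-order data along $C$ is rotation-invariant.
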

\begin{proof} Denote by $C$ any connected component of $X(\Sigma)\cap\{z=0\}$ and by $\sigma$ the arc length parameter of $C$. Recall from Lemma \ref{n2} that $C$ is a geodesic of $\Sigma$. In addition, Lemma \ref{n2} also shows that the surface $X(\Sigma)$ intersects the plane $\{z=0\}$ orthogonally. It then follows from the classical Joachimsthal theorem that $X(\Sigma)\cap\{z=0\}$ is composed by lines of curvature of $\Sigma$, that is, $\tau_g= 0$ holds everywhere on $C$, where $\tau_g$ denotes the geodesic torsion of $C$.

Differentiating the reduced membrane equation \eqref{RME0}, we obtain
\begin{equation}\label{needed}
	\nabla\left(H+c_o\right)=\nabla\left(-\frac{\nu_3}{z}\right)=\frac{1}{z}\left(-\nabla\nu_3+\frac{\nu_3}{z}\nabla z\right)=\frac{1}{z}\left(-\nabla\nu_3-(H+c_o)\nabla z\right).
\end{equation}

Since the immersion $X:\Sigma\longrightarrow\mathbb{R}^3$ is of class $\mathcal{C}^3$, $\nabla(H+c_o)$ is well defined everywhere on $\Sigma$. In particular, along $C$, $z=0$ holds so it must follow that
$$-\nabla\nu_3-(H+c_o)\nabla z=\left(\kappa_1-H-c_o\right)\nabla z=\left(H-\kappa_n-c_o\right)\nabla z=0\,,$$
where $\kappa_1$ is the principal curvature such that $\kappa_1=2H-\kappa_n$ along $C$ because $\tau_g=0$ holds (here, $\kappa_n$ denotes the normal curvature of $C$). Observe that in the first equality above we have used $\nabla\nu_3=d\nu\left(\nabla z\right)=-\kappa_1\nabla z$. Therefore, $H-\kappa_n-c_o=0$ along $C$, because $\nabla z\neq 0$ due to the orthogonality of the intersection (c.f., Lemma \ref{n2}).

Since $H$ is constant along $C$, we deduce from $\kappa_n=H-c_o$ that the normal curvature of $C$ is also constant. Then, the curvature $\kappa$ of $C$ as a curve in $\{z=0\}\subset\mathbb{R}^3$ satisfies
$$\kappa^2=\kappa_g^2+\kappa_n^2=\kappa_n^2\,,$$
which is constant. Here, $\kappa_g$ denotes the geodesic curvature of $C$, which vanishes since $C$ is a geodesic of $\Sigma$ (Lemma \ref{n2}). Hence, since $C$ is closed and has constant curvature $\kappa$, it must be a circle.
	
Combining this information with $\partial_nH$ being constant on $C$ (recall that $n$ denotes the conormal to the circle $C$ and so $\partial_n$ represents the derivative in the conormal direction), we are in a condition to apply Lemma 4.1 of \cite{PP2}. Hence, concluding that the surface is axially symmetric. Observe that in Lemma 4.1 of \cite{PP2} the axis of rotation is a vertical line passing through the center of the circle $C$.
\end{proof} 

\section{Axially Symmetric Solutions}

Theorem \ref{z=0} (more precisely, Lemma \ref{n2}) shows that the singularity $z=0$ of \eqref{RME0} cannot be avoided if one aims to obtain closed Helfrich surfaces employing the second order reduction \eqref{RME0}. This complicates our task since many techniques coming from the theory of partial differential equations do not apply at this singularity. For instance, the maximum principle and elliptic regularity results cannot be employed directly to study closed solutions of \eqref{RME0}. In this section we will study the case of axially symmetric topological spheres. We will first describe their generating curves and then use those that cut the plane $\{z=0\}$ to construct closed axially symmetric surfaces satisfying \eqref{RME0} everywhere.

\subsection{Geometric Description of the Generating Curves}

Let $X:\Sigma\longrightarrow\r^3$ be an axially symmetric surface whose axis of rotation is the $z$-axis and assume that \eqref{RME0} holds on $\Sigma$. Denote by $\gamma(s)=(r(s),z(s))$ the generating curve of $\Sigma$ included in the half-plane $\{x=r\geq 0\,,\,y=0\}$ and suppose that $\gamma$ is parameterized by the arc length. We then define a function $\varphi(s)$ by  $r'(s)=\cos\varphi(s)$ and $z'(s)=\sin\varphi(s)$, where $\left(\,\right)'$ denotes the derivative with respect to the arc length parameter $s$. The function $\varphi(s)$ represents the angle between the positive part of the $r$-axis and the tangent vector to $\gamma(s)$. It follows from \eqref{RME0}, that   $\gamma(s)$ satisfies the following system of first order ordinary differential equations:
\begin{eqnarray}
	r'(s)&=&\cos\varphi(s)\,,\label{system1}\\
	z'(s)&=&\sin\varphi(s)\,,\label{system2}\\
	\varphi'(s)&=&-2\frac{\cos\varphi(s)}{z(s)}-\frac{\sin\varphi(s)}{r(s)}-2c_o\,.\label{system3}
\end{eqnarray}

\begin{rem}\label{+-} As noticed in \cite{PP2,PP3}, equation \eqref{system3} carries both the positive and negative signs in front of $c_o$, depending on whether the arc length parameter $s$ is measured from the north or south `poles'. However, up to the transformation $z\mapsto -z$, $s$ may be assumed to be measured from the north `pole' and this sign can be fixed to be negative.
\end{rem}

We will now impose the initial conditions at $s=0$. Since we are looking for solutions that cut the axis of rotation, i.e., the $z$-axis, we have $r(0)=0$. The initial height will be considered as a parameter, so we take $z(0)=z_0\neq 0$. Moreover, for regularity purposes the initial angle must satisfy $\varphi(0)=0$. Indeed, we next show that if a solution of \eqref{system1}-\eqref{system3} cuts the $z$-axis, this cut must be orthogonal.

\begin{proposition}\label{orthogonal}
	Let $\{r(s),z(s),\varphi(s)\}$ be a solution of \eqref{system1}-\eqref{system3} such that $r(0)=0$ with $r'(0)\neq 0$ and $z(0)=z_0\neq 0$. Then, $\varphi(0)=0$ holds. In other words, the curve $\gamma(s)=(r(s),z(s))$ intersects the $z$-axis orthogonally.
\end{proposition}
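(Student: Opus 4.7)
The plan is to exploit the fact that equation \eqref{system3} carries a singular term $-\sin\varphi(s)/r(s)$ at $s=0$ and that any genuine solution must keep $\varphi$ continuous (hence bounded) as $s\to 0^+$. This forces $\sin\varphi(0)=0$, and combined with $\cos\varphi(0)=r'(0)\neq 0$ this yields $\varphi(0)\equiv 0\pmod{\pi}$; the sign is then fixed by the orientation convention on the arc length parameter.

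First I would exploit \eqref{system1} together with the hypotheses $r(0)=0$ and $r'(0)=\cos\varphi(0)\neq 0$ to write the Taylor expansion
\[
r(s)=\cos\varphi(0)\,s+o(s) \qquad (s\to 0).
\]
Since $z(0)=z_0\neq 0$, the terms $-2\cos\varphi(s)/z(s)$ and $-2c_o$ on the right-hand side of \eqref{system3} remain uniformly bounded on some small interval $[0,\delta)$, so the only potential obstruction to $\varphi'$ staying integrable at $s=0$ is the term $-\sin\varphi(s)/r(s)$.

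Next I would argue by contradiction: suppose $\sin\varphi(0)\neq 0$. Using the Taylor expansion of $r$ and the continuity of $\sin\varphi$, there exist $\delta>0$ and $M>0$ such that for every $s\in(0,\delta)$,
\[
\left|\varphi'(s)+\frac{\tan\varphi(0)}{s}\right|\leq M.
\]
Integrating from $s$ to $\delta$ produces
\[
|\varphi(\delta)-\varphi(s)|\geq |\tan\varphi(0)|\,\bigl|\ln(\delta/s)\bigr|-M(\delta-s),
\]
and the right-hand side tends to $+\infty$ as $s\to 0^+$, contradicting the continuity of $\varphi$ at $s=0$. Therefore $\sin\varphi(0)=0$, and since $\cos\varphi(0)\neq 0$, necessarily $\varphi(0)\in\pi\mathbb{Z}$; either value encodes the orthogonal intersection of $\gamma$ with the $z$-axis, and normalizing $s$ so that $r$ is increasing at $s=0$ (i.e.\ $\cos\varphi(0)>0$) fixes $\varphi(0)=0$.

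The main obstacle is making the heuristic ``$\varphi'(s)\sim -\tan\varphi(0)/s$'' precise: one must carefully absorb the $o(s)$ error from the Taylor expansion of $r$ and the variation of $\sin\varphi(s)$ near $\sin\varphi(0)$ into a single bounded remainder in order to honestly extract the logarithmic divergence on integration. Once that estimate is in place, the contradiction with continuity of $\varphi$ is immediate and the proposition follows.
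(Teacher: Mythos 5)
Your argument is correct in its essentials, but it takes a genuinely different route from the paper. The paper does not argue by contradiction: it uses $r'(0)\neq 0$ to write $\gamma$ locally as a graph $z=z(r)$, recasts \eqref{system3} in divergence form, integrates from $0$ to $r$, and bounds the resulting integral using $\lvert z\rvert\geq\lambda$ to obtain the direct estimate $\bigl|z_r/\sqrt{1+z_r^2}\bigr|\leq(1/\lambda+\lvert c_o\rvert)r$, whence $z_r(0)=0$. Your proof instead isolates the singular term $-\sin\varphi/r$ in the $\varphi$-equation and shows that $\sin\varphi(0)\neq 0$ would force a logarithmic blow-up of $\varphi$, contradicting its continuity at $s=0$. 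Both exploit the same $1/r$ singularity; the paper's version buys a clean quantitative rate ($\sin\varphi=O(r)$) with no case analysis, while yours is more elementary ODE reasoning that avoids the divergence-form rewriting. One technical caveat: your intermediate inequality $\bigl|\varphi'(s)+\tan\varphi(0)/s\bigr|\leq M$ is stronger than what the hypotheses give, since $r(s)=\cos\varphi(0)\,s+o(s)$ and the continuity of $\sin\varphi$ only yield an error of size $o(1/s)$, not $O(1)$, and this cannot in general be absorbed into a bounded remainder as your final paragraph suggests. This does not sink the proof: it suffices to note that for $s$ small enough $\varphi'(s)$ has a fixed sign and satisfies $\lvert\varphi'(s)\rvert\geq\tfrac{1}{4}\lvert\tan\varphi(0)\rvert/s$, which upon integration still produces the divergent logarithm and the desired contradiction. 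With that adjustment, and the orientation convention $r'(0)>0$ (forced anyway by $r\geq 0$ and $r(0)=0$) to exclude $\varphi(0)=\pi$, your proof is complete.
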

\begin{proof}
	Let $\gamma(s)=(r(s),z(s))$ be the curve constructed from a solution of \eqref{system1}-\eqref{system3} such that $r(0)=0$ and $z(0)=z_0\neq 0$. Since $r'(0)\neq 0$, near $r=0$, we can describe the curve $\gamma$ as a graph $z= z(r)$. Then, \eqref{system3} can be rewritten as
	\begin{equation*}\label{er}
		\left(\frac{r z_r}{2\sqrt{1+z_r^2}}\right)_r=-\frac{r}{z\sqrt{1+z_r^2}}-c_or\,.
	\end{equation*}
	Integrating from $r=0$ to $r$, we have
	$$\frac{z_r}{\sqrt{1+z_r^2}}=-\frac{2}{r}\int_0^r \frac{t}{z(t)\sqrt{1+z_r^2(t)}}\,dt-c_o r\,.$$
	Since $z(0)=z_0\neq 0$, for sufficiently small $\epsilon>0$, there exists a positive constant $\lambda>0$ (depending on $\epsilon$) such that $\lvert z(r)\rvert \geq \lambda$ for all $r\in[0,\epsilon]$. Thus,
	\begin{equation*}
			\left|\frac{z_r}{\sqrt{1+z_r^2}}\right|\leq\frac{2}{r}\int_0^r\frac{t}{\lvert z\rvert\sqrt{1+z_r^2}}\,dt+\lvert c_o\rvert r\leq \frac{2}{r \lambda}\int_0^r t\,dt+\lvert c_o\rvert r=\left(\frac{1}{\lambda}+\lvert c_o\rvert \right)r\,.
	\end{equation*}
	Letting $r\to 0$, we conclude that $z_r(0)=0$, as claimed.
\end{proof}

To sum up, the appropriate initial conditions for the system \eqref{system1}-\eqref{system3} are:
\begin{equation}\label{conditions}
	r(0)=0\,,\quad\quad\quad z(0)=z_0\neq 0\,,\quad\quad\quad\varphi(0)=0\,.
\end{equation}

\begin{rem}\label{circle} If $c_o=0$, the unique solution to the system \eqref{system1}-\eqref{system3} with initial conditions \eqref{conditions} is (a part of) a circle of radius $\lvert z_0\rvert$. A more general result was obtained using Bryant's quartic differential in Theorem 1.1 of \cite{P}.
\end{rem}

Next, we will show that the value $c_o\neq 0$ can be fixed, possibly changing the initial height.

\begin{proposition}\label{+}
	Let $\{r(s),z(s),\varphi(s)\}$ be a solution of \eqref{system1}-\eqref{system3} with initial conditions \eqref{conditions}. Then, for every $\lambda\neq 0$,
	$$\widetilde{r}(s)=\lambda r(s/\lambda)\,,\quad\quad\quad \widetilde{z}(s)=\lambda z(s/\lambda)\,,\quad\quad\quad \widetilde{\varphi}(s)=\varphi(s/\lambda)\,,$$
	is a solution of \eqref{system1}-\eqref{system3} changing $c_o$ by $c_o/\lambda$ and with initial conditions
	$$\widetilde{r}(0)=0\,,\quad\quad\quad \widetilde{z}(0)=\lambda z_0\neq 0\,,\quad\quad\quad \widetilde{\varphi}(0)=0\,.$$
\end{proposition}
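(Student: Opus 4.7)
The proof is a direct verification. The plan is to substitute the ansatz $\{\widetilde{r}(s),\widetilde{z}(s),\widetilde{\varphi}(s)\}=\{\lambda r(s/\lambda),\lambda z(s/\lambda),\varphi(s/\lambda)\}$ into each of \eqref{system1}--\eqref{system3}, and check that all three ODEs hold after replacing $c_o$ by $c_o/\lambda$, and that the three initial conditions of the statement are met.

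First I would compute derivatives using the chain rule. Writing $u=s/\lambda$, one has $\widetilde{r}'(s)=r'(u)=\cos\varphi(u)=\cos\widetilde{\varphi}(s)$, which verifies \eqref{system1} for the new triple, and similarly $\widetilde{z}'(s)=z'(u)=\sin\varphi(u)=\sin\widetilde{\varphi}(s)$ gives \eqref{system2}. In particular, $\widetilde{\gamma}$ is arc-length parameterized because $\widetilde{r}'^2+\widetilde{z}'^2=1$. For the third equation,
\begin{equation*}
\widetilde{\varphi}'(s)=\tfrac{1}{\lambda}\varphi'(u)=\tfrac{1}{\lambda}\left(-2\tfrac{\cos\varphi(u)}{z(u)}-\tfrac{\sin\varphi(u)}{r(u)}-2c_o\right),
\end{equation*}
and distributing the $1/\lambda$ and using $\lambda z(u)=\widetilde{z}(s)$, $\lambda r(u)=\widetilde{r}(s)$ converts this exactly into
\begin{equation*}
\widetilde{\varphi}'(s)=-2\tfrac{\cos\widetilde{\varphi}(s)}{\widetilde{z}(s)}-\tfrac{\sin\widetilde{\varphi}(s)}{\widetilde{r}(s)}-2\tfrac{c_o}{\lambda},
\end{equation*}
which is \eqref{system3} with spontaneous curvature $c_o/\lambda$. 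Finally, the initial conditions $\widetilde{r}(0)=\lambda r(0)=0$, $\widetilde{z}(0)=\lambda z(0)=\lambda z_0\neq 0$ and $\widetilde{\varphi}(0)=\varphi(0)=0$ follow immediately from \eqref{conditions}.

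There is no real obstacle: the content of the proposition is simply that the Euclidean homothety of $\r^3$ by $\lambda$, reparameterized by arc length, rescales $c_o$ by $1/\lambda$ while preserving the form of the system \eqref{system1}--\eqml{system3}. The only subtlety worth flagging is that if $\lambda<0$, the factor $1/\lambda$ in the chain rule must be handled consistently in all three derivatives, but since every equation is first order in $s$, the same factor $1/\lambda$ appears throughout and cancels correctly, so the verification goes through unchanged.
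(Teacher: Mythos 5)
Your verification is correct and is essentially identical to the paper's own proof: a direct chain-rule computation of $\widetilde{r}'$, $\widetilde{z}'$, $\widetilde{\varphi}'$, substitution of the original system, and rewriting in terms of the rescaled variables to produce the factor $c_o/\lambda$. The added remarks on arc-length preservation and the sign of $\lambda$ are harmless and do not change the argument.
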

\begin{proof} Assume that the functions $\{r(s),z(s),\varphi(s)\}$ satisfy the system \eqref{system1}-\eqref{system3}. Differentiating the new function $\widetilde{r}(s)$ with respect to $s$, we have
	$$\widetilde{r}\,'(s)=\lambda\frac{d}{ds}\left(r(s/\lambda)\right)=r'(s/\lambda)=\cos\varphi(s/\lambda)=\cos\widetilde{\varphi}(s)\,,$$
	where we have used that $r$ satisfies \eqref{system1} and the definition of $\widetilde{\varphi}$. Similarly, we check that $\widetilde{z}\,'(s)=\sin\widetilde{\varphi}(s)$, while for $\widetilde{\varphi}\,'(s)$ we have
	$$\widetilde{\varphi}\,'(s)=\frac{1}{\lambda}\left(-2\frac{\cos\widetilde{\varphi}(s)}{\widetilde{z}(s)/\lambda}-\frac{\sin\widetilde{\varphi}(s)}{\widetilde{r}(s)/\lambda}-2c_o\right)=-2\frac{\cos\widetilde{\varphi}(s)}{\widetilde{z}(s)}-\frac{\sin\widetilde{\varphi}(s)}{\widetilde{r}(s)}-2\frac{c_o}{\lambda}\,.$$
	Finally, evaluating at $s=0$, the new initial conditions follow immediately. 
\end{proof}

From now on, we will assume that the spontaneous curvature is a fixed positive constant, that is, $c_o>0$. This can be done without any loss of generality, possibly applying Proposition \ref{+} for $\lambda=-1$. Since our initial height is going to be a parameter, such a transformation will not eliminate any cases. Notice that from Proposition \ref{+}, not only the sign, but also the magnitude of $c_o$ may be fixed. However, due to the physical meaning of the spontaneous curvature, we will only restrict its sign while keeping $c_o>0$ everywhere to highlight the role it plays on the shape of the generating curves.

Before describing the solutions to \eqref{system1}-\eqref{system3} with initial conditions \eqref{conditions}, we need to clarify their existence. Notice that the system \eqref{system1}-\eqref{system3} is singular at $r=0$ and, hence, the existence of solution for the initial value problem posed above is not guaranteed by the standard theory of ordinary differential equations. Nevertheless, using an argument from \cite{CDO}, in Proposition 2.2 of \cite{PP3} the existence was shown as well as the continuous dependence on the parameters $c_o$ and $z_0$. We state the result here for the sake of clarity.

\begin{proposition}\label{existence} 
	Let $c_o> 0$ and $z_0\neq 0$. Then, the system of first order differential equations \eqref{system1}-\eqref{system3} with initial conditions \eqref{conditions} has analytic solutions that depend continuously on the parameters $c_o$ and $z_0$.
\end{proposition}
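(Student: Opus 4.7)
The main obstacle is the singularity of equation \eqref{system3} at $r=0$, which coincides with the initial condition and prevents a direct application of the Picard--Lindel\"of theorem. The plan is to desingularize the system, recast it as an integral equation on a small interval $[0,\epsilon]$, and apply a Banach fixed-point argument with parameter dependence.

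First I would eliminate the singular term by observing that, using \eqref{system1}--\eqref{system3}, the combination $r\sin\varphi$ satisfies
\begin{equation*}
\frac{d}{ds}\bigl(r(s)\sin\varphi(s)\bigr)=-2\,r(s)\cos\varphi(s)\left(\frac{\cos\varphi(s)}{z(s)}+c_o\right),
\end{equation*}
in which the cancellation kills the $\sin\varphi/r$ term, so the right-hand side is regular at $s=0$. Integrating from $0$ to $s$ and using the initial conditions $r(0)=\varphi(0)=0$ produces the equivalent integral system
\begin{align*}
r(s) &= \int_0^s \cos\varphi(t)\,dt, \\
z(s) &= z_0 + \int_0^s \sin\varphi(t)\,dt, \\
r(s)\sin\varphi(s) &= -2\int_0^s r(t)\cos\varphi(t)\left(\frac{\cos\varphi(t)}{z(t)}+c_o\right)dt.
\end{align*}

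Next, on the Banach space $\mathcal{C}([0,\epsilon];\r^3)$ I would take the closed convex subset of triples $(r,z,\varphi)$ satisfying, for instance, $s/2\le r(s)\le 2s$, $\lvert z(s)-z_0\rvert\le \lvert z_0\rvert/2$, and $\lvert \varphi(s)\rvert\le 1$, and define an operator $\mathcal{F}$ whose three components are the right-hand sides above (the third component being recovered by dividing by $r(s)$, which is bounded below on this set). For $\epsilon>0$ sufficiently small in terms of $c_o$ and $z_0$, elementary estimates show that $\mathcal{F}$ preserves this set and is a contraction in the sup norm, so the Banach fixed-point theorem yields a unique continuous solution. A standard bootstrap using the smoothness of the integrands in $(r,z,\varphi)$ yields $\mathcal{C}^\infty$ regularity, and continuous dependence on the parameters $(c_o,z_0)$ follows immediately from the parametric version of the fixed-point theorem, the contraction constant being uniform on compact parameter ranges.

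Finally, to upgrade $\mathcal{C}^\infty$ regularity to analyticity I would substitute formal power series $r(s)=s+\sum_{k\ge 2}a_ks^k$, $z(s)=z_0+\sum_{k\ge 1}b_ks^k$, $\varphi(s)=\sum_{k\ge 1}c_ks^k$ into the desingularized system, verify that all coefficients are uniquely determined recursively (in particular one recovers $c_1=-1/z_0-c_o$, the value of $\varphi'(0)$ forced by consistency of \eqref{system3} at $s=0$), and establish convergence on a disk $\lvert s\rvert<\rho$ by a majorant argument of Briot--Bouquet type. The main technical difficulty is finding the right desingularization and securing the contraction estimate uniformly in the parameters; once this is in place, existence, uniqueness, regularity, analyticity, and continuous dependence all follow from standard ODE machinery.
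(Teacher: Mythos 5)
Your overall strategy is sound, and it is worth noting that the paper does not actually prove Proposition \ref{existence}: it simply cites Proposition 2.2 of \cite{PP3}, which in turn adapts an argument from \cite{CDO} for singular ODEs of exactly this type. What you propose is essentially a self-contained reconstruction of that cited argument. Your desingularizing identity is correct and is the same first integral the paper records as \eqref{eqi} (indeed $\tfrac{d}{ds}(r\sin\varphi+c_or^2)=-2r\cos^2\varphi/z$ is equivalent to your formula), and your recursive determination of $c_1=-1/z_0-c_o$ matches \eqref{ai}. The one step you should not wave through is the claim that ``elementary estimates show that $\mathcal{F}$ \ldots is a contraction in the sup norm.'' If the third component of $\mathcal{F}$ divides by the \emph{input} $r$, the estimate fails to produce a small constant: the difference of the integrals is only $O(s)\,\|\delta\|$ (the integrand is Lipschitz in $r$ with constant $O(1)$, not $O(s)$), and dividing by $r(s)\geq s/2$ eats the factor of $s$, leaving a Lipschitz constant of order $1/|z_0|+c_o$ that does not shrink with $\epsilon$. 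This is fixable in any of the standard ways --- slave $r$ to $\varphi$ by using the updated $\tilde r(s)=\int_0^s\cos\varphi$ inside the third component (so that component depends only on $(z,\varphi)$ and every term then gains a factor of $s$), or use a weighted norm on the triple, or show that $\mathcal{F}^2$ is a contraction --- but one of these must be said explicitly, since this division-by-$r$ loss is precisely the reason the classical Picard--Lindel\"of theorem does not apply here in the first place. Finally, for analyticity you could avoid the Briot--Bouquet majorant computation altogether by following the paper's own chain: the proof of Theorem \ref{regularity} shows $\varphi''(0)=0$, hence the rotated surface is $\mathcal{C}^3$ across $r=0$, hence a Helfrich surface by Theorem \ref{z=0}, hence real analytic by elliptic regularity (Remark \ref{elliptic}); your direct power-series argument is a legitimate alternative but is the more laborious of the two.
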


We now proceed with the description and classification of the curves $\gamma(s)=(r(s),z(s))$ constructed from solutions of \eqref{system1}-\eqref{system3} with initial conditions \eqref{conditions}. We will first obtain an integral of equation \eqref{system3}. Multiplying \eqref{system3} by $r\cos\varphi$, we obtain
$$\frac{d}{ds}\left(r(s)\sin\varphi(s)+c_o r^2(s)\right)=-2\frac{r(s)\cos^2\varphi(s)}{z(s)}\,.$$
Hence, integrating this from $s=0$ to $s$, and taking into account that $r(0)=0$, we have
\begin{equation}\label{eqi}
	r(s)\left(\sin\varphi(s)+r(s)c_o\right)=-2\int_0^s\frac{r(t)\cos^2\varphi(t)}{z(t)}\,dt\,.
\end{equation}

The classification of the curves $\gamma(s)=(r(s),z(s))$ will depend on the sign of $\varphi'(0)$. Applying L'H\^{o}pital's rule to \eqref{system3} and using the initial conditions \eqref{conditions}, we have at $s=0$
\begin{equation}\label{ai}
	\varphi'(0)=-\frac{1}{z_0}-c_o\,.
\end{equation}
Therefore, there exist three essentially different cases, namely, $z_0>0$, $-1/c_o<z_0<0$, and $z_0<-1/c_o$. We refer to the curves of these cases as \emph{unduloid-type}, \emph{ovaloid-type}, and \emph{nodoid-type}, respectively.

\begin{rem}\label{line}
	From the existence and uniqueness of solution to the system \eqref{system1}-\eqref{system3} with initial conditions \eqref{conditions}, the case $z_0=-1/c_o$ corresponds with a horizontal straight line (see the fifth curve of Figure \ref{Profiles}).
\end{rem}

We first consider the unduloid-type curves (that is, the case $z_0>0$).

\begin{theorem}\label{tc1}
Let $c_o>0$, $z_0>0$ and consider the solution $\{r(s),z(s),\varphi(s)\}$ of \eqref{system1}-\eqref{system3} with initial conditions \eqref{conditions}. Then, the curve $\gamma(s)=(r(s),z(s))$ is a graph over the $z$-axis, $r= r(z)<1/c_o$, defined on $[0,z_0]$ and which intersects the $r$-axis orthogonally.
\end{theorem}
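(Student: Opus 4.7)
The approach is to combine the first integral \eqref{eqi} with a continuation argument for the ODE system \eqref{system1}--\eqref{system3}. First, by Proposition \ref{existence} the initial value problem has a unique real-analytic local solution, and \eqref{ai} gives $\varphi'(0) = -1/z_0 - c_o < 0$. Hence for small $s > 0$, $\varphi(s) \in (-\pi/2, 0)$, so $r(s) > 0$ and $z(s) < z_0$. Let $[0, s_*)$ denote the maximal interval on which $r > 0$ and $z > 0$. Applying \eqref{eqi}, the integrand $r\cos^2\varphi/z$ is non-negative and not identically zero on $(0, s_*)$, so the right-hand side is strictly negative, giving
$$\sin\varphi(s) + c_o\, r(s) < 0,$$
from which $r(s) < -\sin\varphi(s)/c_o \leq 1/c_o$ and $\sin\varphi(s) < 0$. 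The latter makes $z$ strictly decreasing on $[0, s_*)$, so $\gamma$ is a graph $r = r(z)$ with $r$ strictly bounded by $1/c_o$.

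Second, I would show $s_* < \infty$ with $z(s_*) = 0$ and $r(s_*) > 0$, which yields the domain $[0, z_0]$ for the graph. The trajectory lies in the compact region $[0, 1/c_o] \times [0, z_0]$, so a standard continuation argument forces $s_* < \infty$ together with $r(s_*) = 0$ or $z(s_*) = 0$ (or both). The case $r(s_*) = 0$ with $z(s_*) > 0$ is ruled out by taking limits in \eqref{eqi}: the left-hand side tends to $0$ while the right-hand side tends to a strictly negative limit (the integrand is bounded when $z(s_*) > 0$), which is a contradiction. The alternative $s_* = \infty$ would force the trajectory to approach an $\omega$-limit set inside the compact region, but no fixed points of \eqref{system1}--\eqref{system3} exist since $\cos\varphi$ and $\sin\varphi$ cannot vanish simultaneously. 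Hence $z(s_*) = 0$ and $r(s_*) > 0$.

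Finally, I would obtain the orthogonal intersection by contradiction using \eqref{eqi}. The left-hand side is bounded in absolute value by $2/c_o$ on $[0, s_*)$. Suppose $\cos\varphi(s_*) \neq 0$. Passing $\sin\varphi(s) < -c_o r(s) < 0$ to the limit, $\sin\varphi(s_*) \leq -c_o r(s_*) < 0$, so $\sin\varphi$ is bounded away from $0$ near $s_*$ and $z(s) \sim C(s_* - s)$ as $s \to s_*^-$. Since $r$ and $\cos^2\varphi$ are also bounded below by positive constants there, the integrand $r\cos^2\varphi/z$ exceeds a positive multiple of $1/(s_* - s)$, whose integral diverges, contradicting the uniform bound. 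Thus $\cos\varphi(s_*) = 0$, and since $\varphi \in (-\pi, 0)$ throughout, $\varphi(s_*) = -\pi/2$, establishing orthogonal intersection with the $r$-axis. I expect the main obstacle to be the continuation step, specifically excluding recurrent or oscillatory trajectories in the phase space that would prevent $z$ from reaching zero in finite arc length.
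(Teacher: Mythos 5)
Your first half matches the paper's argument: the monotone first integral \eqref{eqi} gives $\sin\varphi+c_o r<0$ on the interval of positivity, hence $r<1/c_o$, $z'=\sin\varphi<0$, and (by letting $r\to 0$ in \eqref{eqi} against a right-hand side that is negative and nonincreasing) a uniform lower bound $r\geq\lambda>0$ away from $s=0$. Two of your later steps, however, have genuine gaps. First, the claim that compactness of the planar region ``forces $s_*<\infty$'' is not what the continuation theorem gives: boundedness of $(r,z)$ alone is compatible with $s_*=\infty$, and your fallback via $\omega$-limit sets does not close this case, since the $\omega$-limit set of a bounded trajectory of a three-dimensional system need not be a fixed point (and here the vector field is singular on part of the boundary of the region anyway). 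The fix is already in your hands and is exactly what the paper does: from $r\geq\lambda$ and $\sin\varphi<-c_o r$ you get $z'\leq -c_o\lambda$ past some $s=\epsilon$, so $z$ reaches $0$ in finite arc length; this simultaneously yields $s_*<\infty$, $z(s_*)=0$ and, since $|r'|\leq 1$, the existence of $r(s_*)=r_*\geq\lambda>0$. (Your case split also leaves $r(s_*)=z(s_*)=0$ formally unexcluded, though the same \eqref{eqi} argument kills it.)

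Second, your orthogonality argument presupposes that $\varphi(s_*):=\lim_{s\to s_*}\varphi(s)$ exists: ``suppose $\cos\varphi(s_*)\neq 0$'' and ``$\cos^2\varphi$ bounded below near $s_*$'' are not interchangeable with the negation of $\varphi(s)\to-\pi/2$, and a priori $\cos^2\varphi$ could oscillate with $\liminf$ zero while the integral in \eqref{eqi} stays finite. The paper avoids this by writing the curve as a graph $r=r(z)$, integrating \eqref{system3} explicitly and bounding $|r_z/\sqrt{1+r_z^2}|=|\cos\varphi|$ by a quantity tending to $0$ with $z$, which produces the full limit directly. Your divergent-integral idea can be salvaged more cheaply: the function $f(s)=r(\sin\varphi+c_o r)$ is nonincreasing by \eqref{eqi} and bounded, hence converges; since $r\to r_*>0$, it follows that $\sin\varphi$, and therefore $\cos^2\varphi$, converge as $s\to s_*$. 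With $z(s)\asymp(s_*-s)$ your divergence argument then forces $\lim\cos^2\varphi=0$, and $\sin\varphi\leq -c_o r_*<0$ pins down $\varphi(s_*)=-\pi/2$. With these two repairs your proof is correct and is essentially a variant of the paper's.
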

\begin{proof} Let $\{r(s),z(s),\varphi(s)\}$ be the solution of \eqref{system1}-\eqref{system3} with initial conditions \eqref{conditions} for an initial height $z_0>0$. It then follows from \eqref{ai} that $\varphi'(0)<0$ and, hence, $\varphi(s)$ decreases near $s=0$. Therefore, since $\varphi(0)=0$ and $\varphi$ decreases for small values of $s$, we have that $\varphi(s)<0$ and that $\sin\varphi(s)<0$ for every $s\in(0,\epsilon)$ where $\epsilon>0$ is sufficiently small. Denote by $[0,\ell)$ the maximal domain of the solution. Since $z(s)>0$ for every $s\in[0,\ell)$, it follows from \eqref{eqi} that $r(s)>0$ for every $s\in(0,\ell)$. On the contrary, the left-hand side of \eqref{eqi} would be zero while the right-hand side would be negative. In addition, the same reasoning proves that
	$$\lim_{s\to\ell} r(s)\neq 0\,.$$
Therefore, for the fixed $\epsilon>0$ as above, there exists a constant $\lambda>0$ (depending only on $\epsilon$) such that $r(s)\geq \lambda$ for every $s\in(\epsilon,\ell)$.

Similarly, we also deduce from \eqref{eqi} that $\sin\varphi(s)+c_or(s)<0$ holds for every $s\in(0,\ell)$. This implies that
$$r(s)<-\frac{\sin\varphi(s)}{c_o}\leq \frac{1}{c_o}\,.$$
Consequently, we have that $z'(s)=\sin\varphi(s)<-c_or(s)\leq -c_o\lambda<0$ for every $s\in(\epsilon,\ell)$, which implies that the function $z(s)$ decreases everywhere in its domain and $z(s)\to 0$ as $s\to\ell$. In particular, since $z(s)$ is strictly decreasing, the curve $\gamma(s)=\left(r(s),z(s)\right)$ is a graph over the $z$-axis.

Finally, we will prove that the curve $\gamma(s)=(r(s),z(s))$ intersects the $r$-axis orthogonally or, equivalently, $\varphi(s)\to-\pi/2$ as $s\to \ell$. Since $\gamma$ is a graph $r=r(z)$, we rewrite \eqref{system3} as
$$z^2\left(\frac{r_z}{z^2\sqrt{1+r_z^2}}\right)_z=\frac{1}{r\sqrt{1+r_z^2}}-2c_o\,.$$
Dividing by $z^2$ and integrating from $z$ to $z_0$, we have
$$\frac{1}{z_0^2}-\frac{r_z}{z^2\sqrt{1+r_z^2}}=\int_z^{z_0}\frac{dz}{rz^2\sqrt{1+r_z^2}}+2c_o\left(\frac{1}{z_0}-\frac{1}{z}\right),$$
since, due to the initial conditions at $s=0$,
$$\lim_{z\to z_0}\frac{r_z}{z^2\sqrt{1+r_z^2}}=\lim_{z\to z_0}\frac{1}{z^2\sqrt{1+z_r^2}}=\frac{1}{z_0^2}\,.$$
Multiplying by $z^2$ and rearranging, we obtain
$$\frac{r_z}{\sqrt{1+r_z^2}}=-z^2\int_z^{z_0}\frac{dz}{rz^2\sqrt{1+r_z^2}}-2c_o\left(\frac{z}{z_0}-1\right)z+\frac{z^2}{z_0^2}\,.$$
Let $z_\epsilon=z(\epsilon)\in(0,z_0)$. Recall that there exists a positive constant $\lambda>0$ such that $r(z)\geq \lambda$ for every $z\in(z_\epsilon,z_0)$. It then follows that
$$\left| \frac{r_z}{\sqrt{1+r_z^2}} \right|\leq \frac{1}{\lambda}\left(1-\frac{z}{z_0}\right)z+2c_o\left(1-\frac{z}{z_0}\right)z+\frac{z^2}{z_0^2}\longrightarrow 0\,,$$
when $z\to 0$. That is, $r_z(0)=0$, which proves that $\gamma$ meets the $r$-axis orthogonally.
\end{proof}

\begin{rem} When $z_0>0$ is small, the curve $\gamma$ is a graph over the $r$-axis as well. As $z_0>0$ increases, there appear points in $\gamma$ where the radius attains their local maxima and minima. The value of the radius at those points oscillates around $1/(2c_o)$, which corresponds with the radius of the critical cylinder (see Remark \ref{cylinder}). Three of these curves are shown on the first row of Figure \ref{Profiles}.
\end{rem}

We next consider the ovaloid-type curves (that is, the case $z_0\in (-1/c_o,0)$).

\begin{theorem}\label{TypeII}
	Let $c_o>0$, $z_0\in(-1/c_o,0)$ and consider the solution $\{r(s),z(s),\varphi(s)\}$ of \eqref{system1}-\eqref{system3} with initial conditions \eqref{conditions}. Then, the curve $\gamma(s)=(r(s),z(s))$ is a convex graph (over both the $z$-axis and $r$-axis) which intersects the $r$-axis orthogonally.
\end{theorem}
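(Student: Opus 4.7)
The plan is to mirror the argument of Theorem \ref{tc1} with the sign of $\varphi'(0)$ reversed. Applying \eqref{ai} with $z_0 \in (-1/c_o, 0)$ gives $\varphi'(0) = -1/z_0 - c_o > 0$, so for small $s > 0$ the angle $\varphi$ enters $(0, \pi/2)$, making $r'(s) = \cos\varphi > 0$ and $z'(s) = \sin\varphi > 0$. I would let $[0, \ell)$ denote the maximal interval on which $\varphi \in (0, \pi/2)$ and $z \in (z_0, 0)$ hold; on this interval $r$ and $z$ are strictly increasing, so $\gamma$ is automatically a graph over both coordinate axes.

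The crucial step is to prove that $\varphi'(s) > 0$ throughout $(0, \ell)$, which simultaneously gives strict convexity since $\varphi'$ is the signed curvature of the planar profile curve. Arguing by contradiction, let $s^* \in (0, \ell)$ be the first zero of $\varphi'$. Differentiating \eqref{system3} and substituting $\varphi'(s^*) = 0$, $z'(s^*) = \sin\varphi(s^*)$, $r'(s^*) = \cos\varphi(s^*)$ yields
$$\varphi''(s^*) = \sin\varphi(s^*)\cos\varphi(s^*)\left(\frac{2}{z(s^*)^2} + \frac{1}{r(s^*)^2}\right) > 0,$$
which contradicts $\varphi''(s^*) \leq 0$ at a first zero of a positive function. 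The same identity rules out $\varphi$ reaching $\pi/2$ while $z < 0$: at such a point \eqref{system3} would force $\varphi' = -1/r - 2c_o < 0$. Since $\sin\varphi$ is then bounded below for $s$ bounded away from $0$, $z$ grows at least linearly and reaches $0$ in finite arc length, so $\ell < \infty$ with $z(\ell) = 0$ and $\varphi(\ell) \in (0, \pi/2]$.

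For orthogonality at $z = 0$, I would parameterize as $r = r(z)$ and reproduce the calculation of Theorem \ref{tc1}. A direct computation using \eqref{system3} yields
$$z^2\left(\frac{r_z}{z^2\sqrt{1+r_z^2}}\right)_z = \frac{1}{r\sqrt{1+r_z^2}} + 2c_o,$$
where the sign of the $c_o$ term is opposite to Theorem \ref{tc1} because here $\cos\varphi = r_z/\sqrt{1+r_z^2}$ rather than $-r_z/\sqrt{1+r_z^2}$. Integrating from $z_0$ to $z$, using $r_z/(z^2\sqrt{1+r_z^2}) \to 1/z_0^2$ as $z \to z_0^+$ (since $\cos\varphi(0) = 1$), and exploiting the lower bound $r(z) \geq \lambda > 0$ on $[z_\epsilon, 0)$ coming from the monotonicity of $r$, every term in the resulting formula for $r_z/\sqrt{1+r_z^2} = \cos\varphi$ tends to zero as $z \to 0^-$. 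Hence $\varphi(\ell) = \pi/2$, and $\gamma$ meets the $r$-axis orthogonally.

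The hard part will be the final estimate. In particular, controlling the integral $\int_{z_0}^{z_\epsilon} dt/(r(t)t^2\sqrt{1+r_z(t)^2})$ near the starting point requires the short-time asymptotics $r(s) \sim s$ and $z(s) - z_0 \sim \varphi'(0)s^2/2$ (hence $r \sim \sqrt{2(z-z_0)/\varphi'(0)}$ and $r_z \to \infty$), which keep the integrand bounded so the integral converges. Once this is settled, the rest of the bookkeeping is parallel to Theorem \ref{tc1}.
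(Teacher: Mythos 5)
Your proposal is correct and follows essentially the same route as the paper: the sign of $\varphi'(0)$ from \eqref{ai}, the exclusion of $\varphi=\pi/2$ via $\varphi'=-1/r-2c_o<0$ there, the first-zero/second-derivative contradiction for the monotonicity of $\varphi$, and the nonparametric integration in the variable $z$ for the orthogonal intersection with the $r$-axis. The only difference is that you carry out in detail (including the sign flip of the $2c_o$ term and the boundedness of the integrand near $z=z_0$) the final estimate that the paper simply defers to the proof of Theorem \ref{tc1}.
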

\begin{proof} Let $\{r(s),z(s),\varphi(s)\}$ be the solution of \eqref{system1}-\eqref{system3} with initial conditions \eqref{conditions} for an initial height $z_0\in(-1/c_o,0)$. Denote by $[0,\ell)$ the maximal domain of this solution. We will show that $\varphi$ is strictly increasing for every $s\in[0,\ell)$ and that
	$$\lim_{s\to \ell} \varphi(s)=\frac{\pi}{2}\,,\quad\quad\quad \lim_{s\to \ell} z(s)=0\,,$$
hold.

We first observe that $\pi/2$ is not in the range of $\varphi$. Suppose by contradiction that there exists a value $s_0\in[0,\ell)$ such that $\varphi(s_0)=\pi/2$. Take it to be the first such a value, then $\varphi'(s_0)\geq 0$ must hold since $\varphi'(0)>0$ holds from \eqref{ai} and so $\varphi$ increases near $s=0$, at which $\varphi(0)=0$. However, evaluating \eqref{system3} at $s=s_0$ we get,
$$\varphi'(s_0)=-\frac{1}{r(s_0)}-2c_o<0\,,$$
which leads to a contradiction.
	
We will then show the monotonicity of $\varphi$. Recall that $\varphi(s)$ increases near $s=0$. Assume that $s_1\in[0,\ell)$ is the first value at which $\varphi'$ vanishes. In particular, $\varphi(s_1)>0$ holds. Differentiating \eqref{system3} we obtain
\begin{equation}\label{d2}
\varphi''(s)=\sin\varphi(s)\cos\varphi(s)\left(\frac{2}{z^2(s)}+\frac{1}{r^2(s)}\right)+\varphi'(s)\left(\frac{2\sin\varphi(s)}{z(s)}-\frac{\cos\varphi(s)}{r(s)}\right)\,,
\end{equation}
and, hence, at $s=s_1$ we have
$$\varphi''(s_1)=\sin\varphi(s_1)\cos\varphi(s_1)\left(\frac{2}{z^2(s_1)}+\frac{1}{r^2(s_1)}\right)>0\,,$$
since by previous observation $\varphi(s_1)<\pi/2$. This would imply that at $s=s_1$ the function $\varphi$ attains a local minimum, but this is not possible since $\varphi$ increases when $s\in[0,s_1)$. Thus, there cannot be a value $s_1\in[0,\ell)$ such that $\varphi'(s_1)=0$. Therefore, the function $\varphi$ is strictly increasing and its range is contained in $[0,\pi/2)$. Since $\varphi(s)\in (0,\pi/2)$ for every $s\in (0,\ell)$, then $r'(s)=\cos\varphi(s)\neq 0$ and $z'(s)=\sin\varphi(s)\neq 0$, proving that $\gamma$ is a graph over both the $r$-axis and $z$-axis. Moreover, since $\varphi'(s)>0$ for every $s\in[0,\ell)$, this graph is convex. 
 
In particular, since the range of $\varphi$ is contained in $[0,\pi/2)$ and $\varphi$ increases everywhere, we obtain that
$$\lim_{s\to \ell}\varphi(s)=\lambda\in(0,\pi/2]\,,$$
and, hence, the function $z'(s)=\sin\varphi(s)\to \sin \lambda>0$ when $s\to \ell$. This shows that $z(s)\to 0$ as $s\to \ell$. Then, an argument as in the last part of the proof of Theorem \ref{tc1} shows that $\varphi(s)\to \pi/2$ when $s\to \ell$. 
\end{proof}
	
Finally, we briefly mention about the nodoid-type curves (that is, the case $z_0<-1/c_o$). These curves do not intersect the $r$-axis and, hence, they will not be used throughout the paper. For this reason, we omit the proof here. The geometric description of part of these curves was formally shown in Theorem 2.2 of \cite{PP3}. Extending this argument, the following remark can be shown.

\begin{rem}
	Let $c_o>0$, $z_0<-1/c_o$ and consider the solution $\{r(s),z(s),\varphi(s)\}$ of \eqref{system1}-\eqref{system3} with initial conditions \eqref{conditions}. Then, the curve $\gamma(s)=\left(r(s),z(s)\right)$, which is defined for every $s\in[0,\infty)$, is a convex curve with self-intersections contained in $\{z<-1/c_o\}$. The last curve of Figure \ref{Profiles} represents this type.
\end{rem}

In Figure \ref{Profiles} we illustrate the one-parameter family of generating curves $\gamma(s)=(r(s),z(s))$ constructed from solutions of \eqref{system1}-\eqref{system3} with initial conditions \eqref{conditions}. As the initial height $z_0\neq 0$ varies we see that the type of curves passes through the three different cases studied above.

\begin{figure}[h!]
	\centering
	\includegraphics[height=5.4cm]{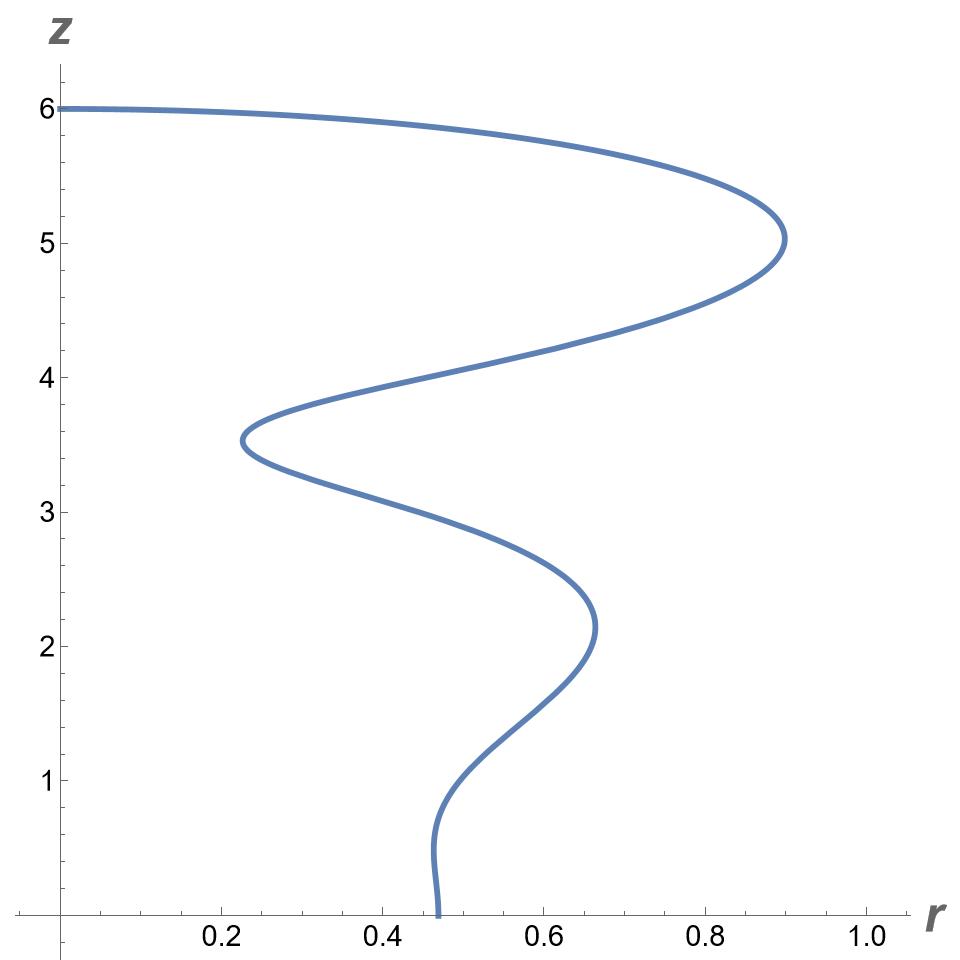}\,\,\,\,
	\includegraphics[height=5.4cm]{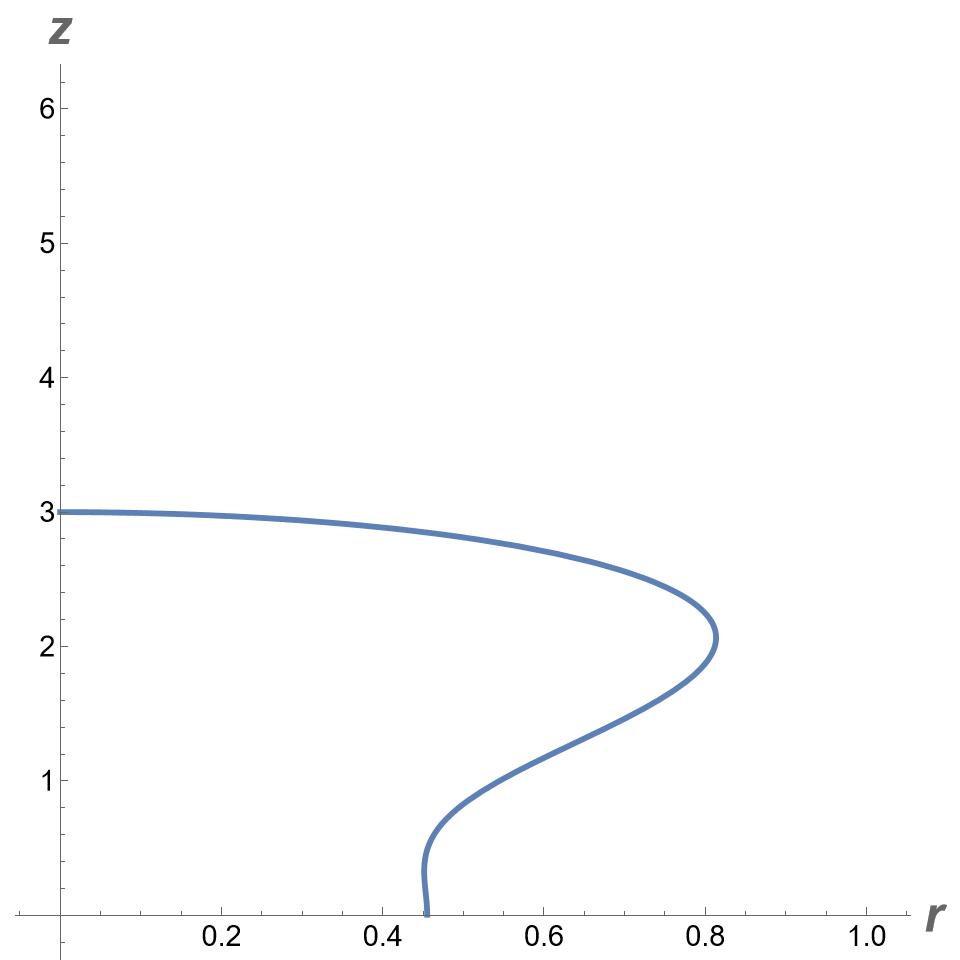}\,\,\,\,
	\includegraphics[height=5.4cm]{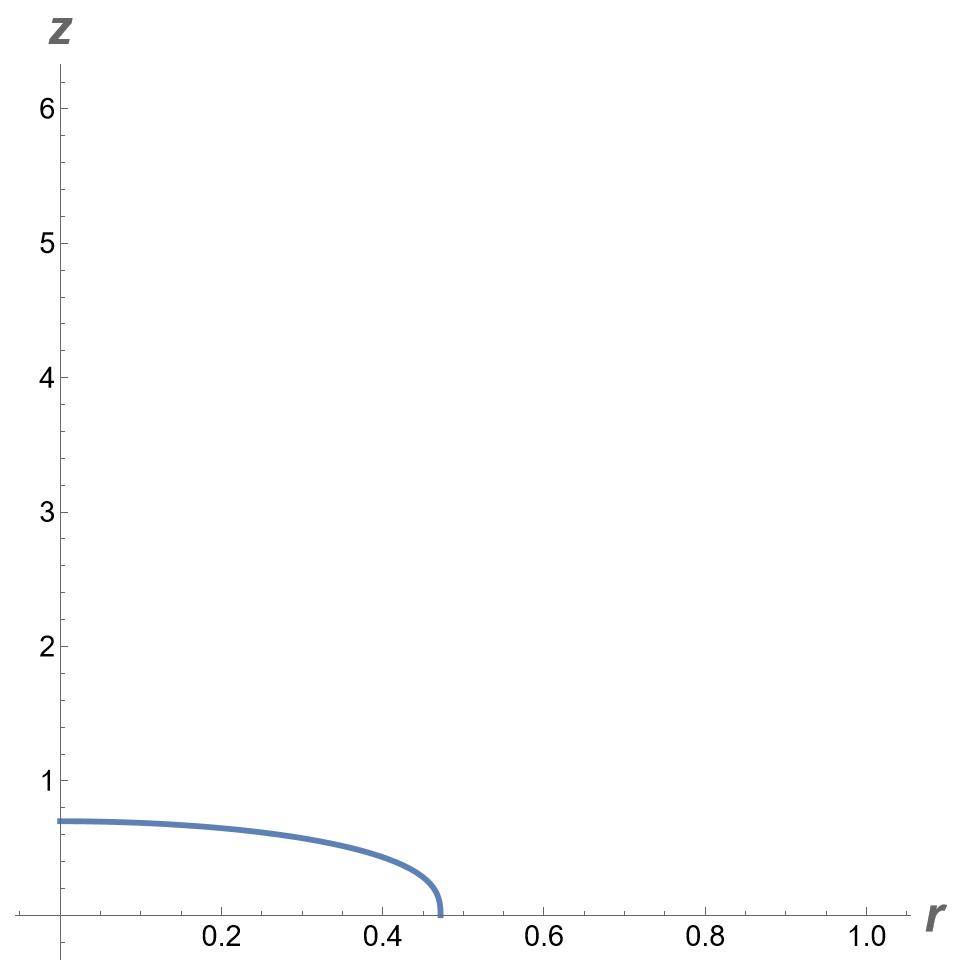}\vspace{0.75cm}
	\includegraphics[height=5.4cm]{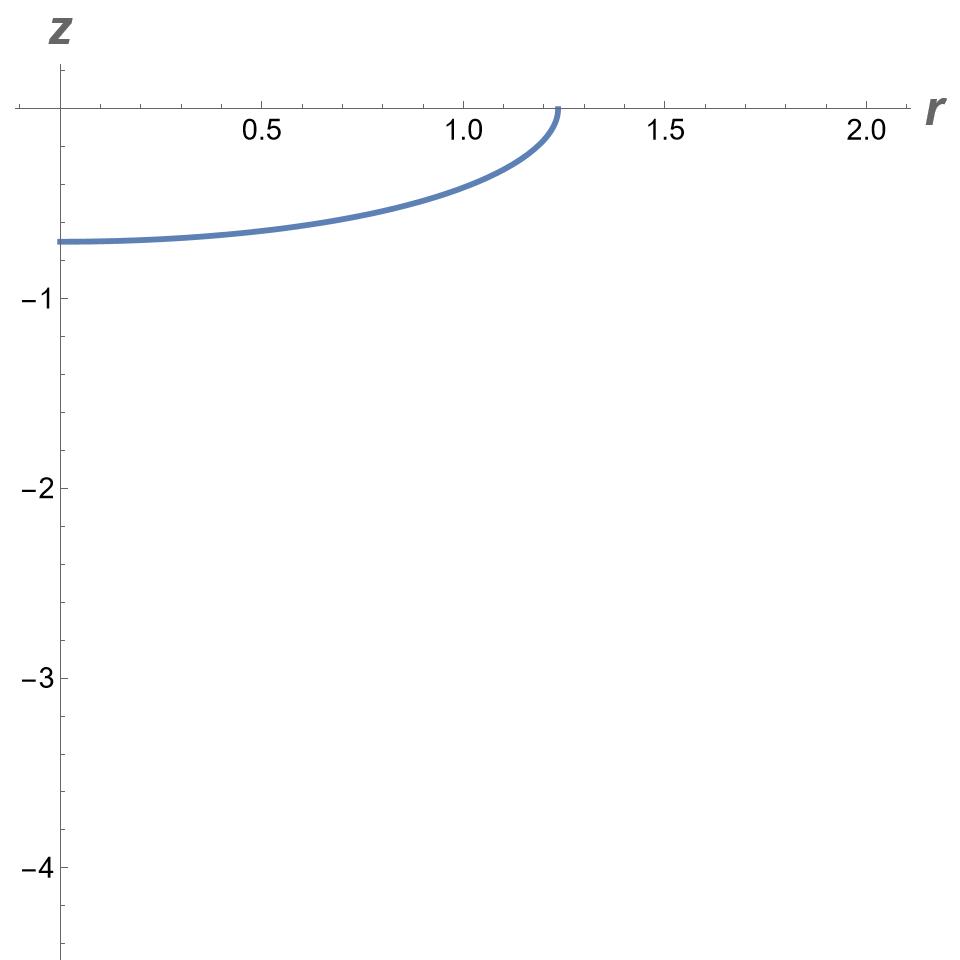}\,\,\,\,
	\includegraphics[height=5.4cm]{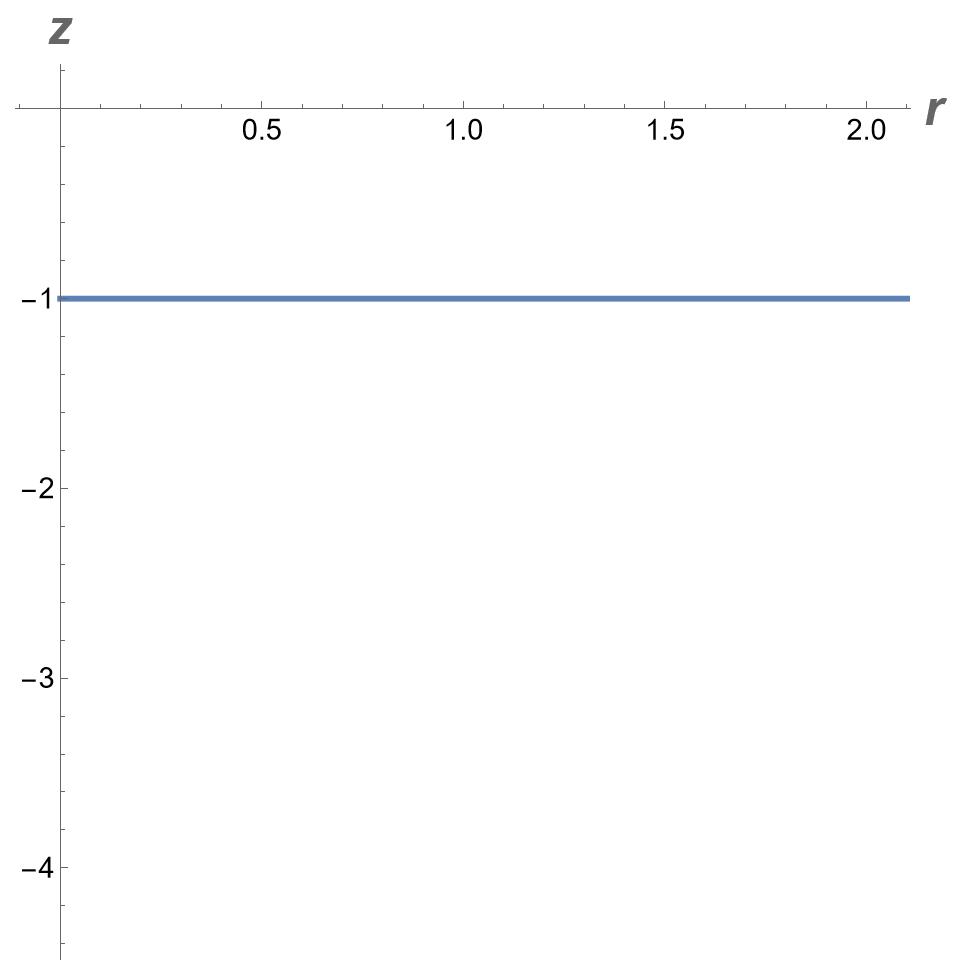}\,\,\,\,
	\includegraphics[height=5.4cm]{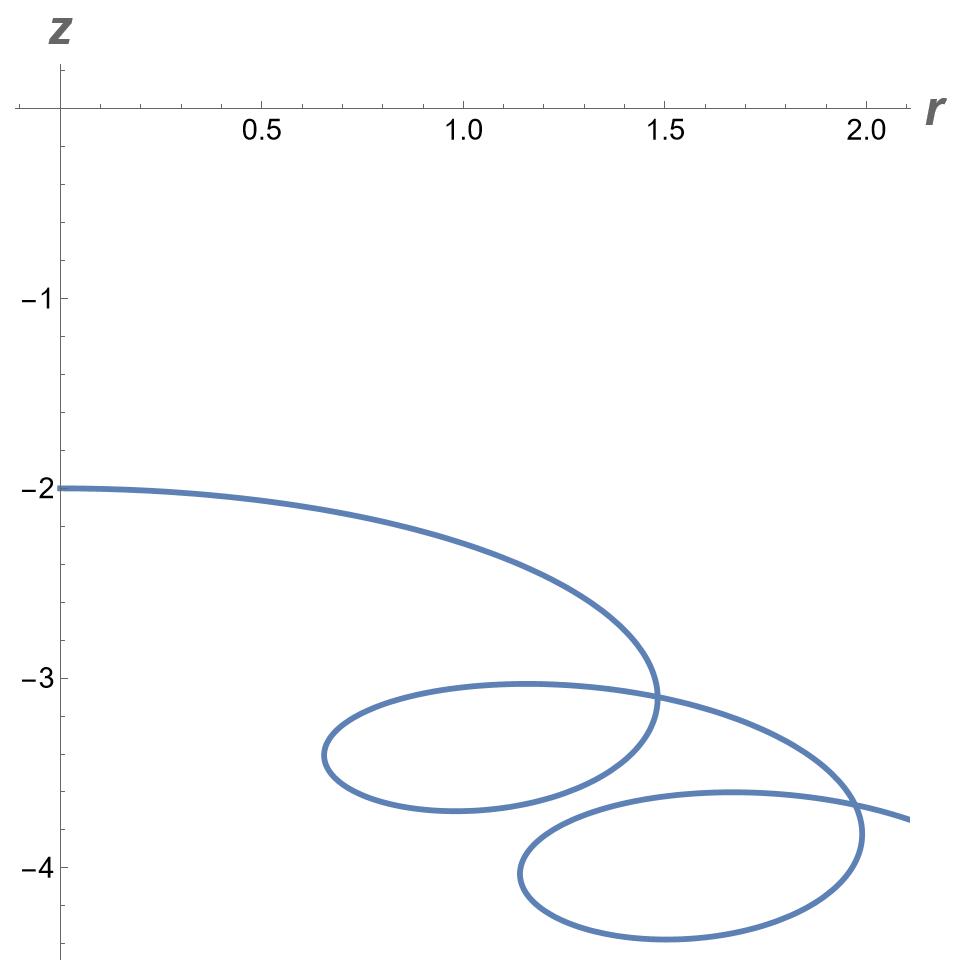}
	\caption{{\small Six curves $\gamma(s)=(r(s),z(s))$ constructed from solutions of \eqref{system1}-\eqref{system3} with initial conditions \eqref{conditions} and different initial heights $z_0\neq 0$. For all the curves we have fixed $c_o=1$. The curves on the top row are unduloid-type curves, the first curve on the bottom row is an ovaloid-type curve, and the last curve is a nodoid-type curve. The curve in the center of the bottom row is the horizontal straight line at height $z=-1/c_o$ (c.f., Remark \ref{line}). (Observe that the representations of the top and bottom rows have different scale.)}}
	\label{Profiles}
\end{figure}

\subsection{Construction and Regularity of Closed Surfaces}

In the previous paragraph, we obtained that, when either $c_o=0$ or $c_o>0$ and the initial heights $z_0\neq 0$ of the initial conditions \eqref{conditions} satisfy $z_0>-1/c_o$, the generating curves $\gamma(s)=(r(s),z(s))$ constructed from solutions of \eqref{system1}-\eqref{system3} intersect orthogonally the $r$-axis (see Remark \ref{circle} for the case $c_o=0$, Theorem \ref{tc1} for unduloid-type curves, and Theorem \ref{TypeII} for ovaloid-type curves). In particular, it follows from Theorem \ref{z=0} that meeting orthogonally the plane $\{z=0\}$ is a necessary condition to obtain closed surfaces satisfying \eqref{RME0}. Since $z=0$ is a singularity for the equation \eqref{system3}, one cannot extend the generating curves further. The procedure to construct axially symmetric surfaces satisfying \eqref{RME0} then consists of suitably gluing together along the plane $\{z=0\}$ two axially symmetric disc type surfaces, possibly one of which should be reflected. Recall that the reduced membrane equation \eqref{RME0} is invariant by the reflection across the plane $\{z=0\}$ (c.f., Proposition \ref{reflection}).

More precisely, let $c_o\geq 0$ be fixed. For a solution of \eqref{system1}-\eqref{system3} with initial conditions \eqref{conditions}, we can construct axially symmetric disc type surfaces simply by rotating (a part of) the curve $\gamma(s)=(r(s),z(s))$ around the $z$-axis. When either $c_o=0$ or $c_o>0$ and the initial height $z_0$ satisfies $z_0>-1/c_o$, the generating curve $\gamma$ meets the $r$-axis, say at $s=\ell$. We then rotate the part of the curve corresponding to $s\in[0,\ell]$. The point where the generating curve meets the $r$-axis will correspond to the boundary of the surface, which is a geodesic parallel contained in the plane $\{z=0\}$. It is then possible to glue this surface to its reflection across the plane $\{z=0\}$ (which satisfies the same reduced membrane equation \eqref{RME0}, as shown in Proposition \ref{reflection}). Of course, solutions of \eqref{system1}-\eqref{system3} with initial conditions \eqref{conditions} for different initial heights may as well be considered and the corresponding axially symmetric disc type surfaces glued together (if necessary, reflecting one of them) as long as these curves meet the $r$-axis at the same point. This gluing procedure then results in closed axially symmetric surfaces of genus zero that satisfy the reduced membrane equation \eqref{RME0} everywhere.

In the following result we study the regularity of the closed surfaces obtained from the above gluing procedure. It turns out that these surfaces are real analytic everywhere but, perhaps, on the geodesic parallel at height $z=0$.

\begin{theorem}\label{regularity}
	Let $X_+:\Sigma_+\longrightarrow\r^3$ and $X_-:\Sigma_-\longrightarrow\r^3$ be two axially symmetric immersions of disc type surfaces satisfying \eqref{RME0}. Assume, possibly after reflecting across the plane $\{z=0\}$ one of the surfaces, that:
	\begin{enumerate}
		\item $X_+\left(\Sigma_+\right)\subset\{z\geq 0\}$,
		\item $X_-\left(\Sigma_-\right)\subset\{z\leq 0\}$, and
		\item $X_+(\partial\Sigma_+)=X_-(\partial\Sigma_-)\subset\{z=0\}$.
	\end{enumerate}
	Then, $X:\Sigma_+\cup\Sigma_-\longrightarrow\r^3$, defined by $X\lvert_{\Sigma_+}=X_+$ and $X\lvert_{\Sigma_-}=X_-$, is the axially symmetric immersion of a closed surface of genus zero satisfying \eqref{RME0} everywhere. The immersion $X$ is real analytic wherever $z\neq 0$ and is of class, at least, $\mathcal{C}^2$ along the parallel $z=0$.
\end{theorem}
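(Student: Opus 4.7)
My strategy is to verify regularity in three stages: real analyticity off the gluing circle, $C^1$ matching there, and $C^2$ matching there. That $X$ satisfies \eqref{RME0} on $\{z\neq 0\}$ and is the immersion of a closed genus zero surface is immediate from the hypotheses, so the content of the theorem is the regularity assertion.

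First, I would establish real analyticity on $\{z\neq 0\}$. Each of $X_\pm$ is axially symmetric with a meridian curve $\gamma_\pm(s)=(r_\pm(s),z_\pm(s))$ solving the first order system \eqref{system1}--\eqref{system3} (with the sign in front of $c_o$ dictated by the orientation of the arc length parameter, as in Remark \ref{+-}). By Proposition \ref{existence} these meridians are real analytic; since they meet the $z$-axis orthogonally (Proposition \ref{orthogonal}), the rotational surfaces they generate are real analytic at the axis as well. Hence $X$ is real analytic everywhere with $z\neq 0$.

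Next I would verify $C^1$ matching along the gluing circle. By Theorems \ref{tc1} and \ref{TypeII} (and their analogues in the lower half) each generating curve meets the $r$-axis orthogonally, so at any point $p$ of the common boundary $X_+(\partial\Sigma_+)=X_-(\partial\Sigma_-)$ both tangent planes contain the tangent to the circle and the vertical vector $E_3$. These two directions span the same plane of $\mathbb{R}^3$, so the tangent planes from above and below agree, giving $C^1$ regularity at $p$. The heart of the proof is then $C^2$ matching. Near a point of the gluing circle, write each piece as a graph $r=r_\pm(z)$; orthogonality gives $r_\pm(0)=r^*$ (the common boundary radius) and $r'_\pm(0)=0$, so $C^2$ regularity reduces to $r''_+(0)=r''_-(0)$. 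Using $r'(z)=\cot\varphi$, differentiating once more, and applying L'H\^opital to the indeterminate $\cos\varphi/z$ in \eqref{system3} at the meeting point (where $\varphi=\mp\pi/2$ and $z=0$), a direct calculation yields
\begin{equation*}
   r''_\pm(0) \;=\; 2c_o-\frac{1}{r^*},
\end{equation*}
for both pieces. Thus the combined function $r(z)$ is $C^2$ at $z=0$, and the parametrization $X(\theta,z)=(r(z)\cos\theta,\,r(z)\sin\theta,\,z)$ inherits this regularity, so $X$ is $C^2$ along the gluing circle.

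The principal delicacy is in the last step: the sign in front of $c_o$ in \eqref{system3} depends on whether the arc length parameter is measured from the north or the south pole (Remark \ref{+-}), and the identity $r''_+(0)=r''_-(0)$ is precisely the statement that, with these sign conventions taken consistently with the outward normal of the closed surface, the curvature prescribed at the gluing circle by the reduced membrane equation coincides from above and below. Once this orientation bookkeeping is settled the two computations reduce to the same expression $2c_o-1/r^*$, and the theorem follows; I do not anticipate additional obstacles beyond this sign accounting.
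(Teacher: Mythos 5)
Your treatment of the gluing circle $\{z=0\}$ is essentially the paper's argument: $C^1$ matching follows from both meridians meeting the $r$-axis orthogonally at the same point, and $C^2$ matching follows from the L'H\^opital computation applied to \eqref{system3} at $\varphi=-\pi/2$, which gives $\varphi_\pm'(\ell_\pm)=2c_o-1/r_*$ (equivalently $r_\pm''(0)=2c_o-1/r_*$ in your graph coordinates), with the sign bookkeeping settled by reflecting the lower meridian across the $r$-axis so that both pieces satisfy the same system \eqref{system1}--\eqref{system3}. That part is correct and matches the paper.

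The gap is at the poles, i.e.\ the points with $r=0$ (these have $z=z_0^\pm\neq 0$, so they are covered by the claim of real analyticity on $\{z\neq 0\}$). You assert that because each meridian is real analytic and meets the $z$-axis orthogonally, the surface of revolution is real analytic at the axis. That implication is false in general: the analytic profile $z=z_0+r^3$ meets the axis orthogonally, yet its surface of revolution $z=z_0+(x^2+y^2)^{3/2}$ is $C^2$ but not $C^3$ at the pole. Analyticity of the rotated surface at the axis requires, roughly, that the profile be an analytic function of $r^2$, and this must actually be verified for solutions of \eqref{system1}--\eqref{system3}. The paper does this by extending $\gamma$ across the $z$-axis via the odd reflection $\widetilde r(s)=-r(-s)$, $\widetilde z(s)=z(-s)$, checking that the extended curve agrees with $\gamma$ to third order at $s=0$ --- the nontrivial point being $\varphi''(0)=0$, obtained by differentiating \eqref{system3} and applying L'H\^opital, which yields $\varphi''(0)=-\varphi''(0)/2$ --- and then invoking Lemma \ref{n1} (a $C^3$ solution of \eqref{RME0} is a Helfrich surface, hence real analytic by elliptic regularity) to upgrade $C^3$ to analytic near the pole. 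Without this step your argument establishes analyticity only on $\{z\neq 0,\ r\neq 0\}$ (where either your meridian argument or the paper's elliptic-regularity argument on the nonparametric form of \eqref{RME0} does work), not at the two poles.
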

\begin{proof} 
Let $X:\Sigma_+\cup\Sigma_-\longrightarrow\r^3$ be the axially symmetric immersion obtained from $X_-$ and $X_+$. These surfaces satisfy the reduced membrane equation \eqref{RME0} and, hence, by the gluing procedure explained above $X(\Sigma_+\cup\Sigma_-)$ is a closed surface of genus zero satisfying \eqref{RME0} everywhere. This shows the first assertion. 

With respect to regularity, wherever $z\neq 0$ and $r\neq 0$, by writing equation \eqref{RME0} in a nonparametric form we get a second order elliptic partial differential equation with coefficients that depend analytically on the height function and its derivatives. It then follows by elliptic regularity \cite{M} that the immersion $X$ is real analytic wherever $z\neq 0$ and $r\neq 0$ holds.

For the cases $r=0$ and $z=0$, since the immersion $X$ is axially symmetric, it is enough to study the regularity for the generating curves $\gamma_+$ and $\gamma_-$, respectively. For simplicity with the signs (c.f., Remark \ref{+-}), we will assume after reflecting $\gamma_-$ across the $r$-axis that both $\gamma_+$ and $\gamma_-$ are contained in the half-plane $\{z\geq 0\}$ and that their arc length parameter $s\in[0,\ell_\pm]$ is measured beginning from the top.

We now consider what happens at $r=0$, which is a singularity of \eqref{RME0} (c.f., \eqref{system3}) and, hence, the classical theory does not apply. Throughout this part, to simplify the notation, we will avoid writing the subindexes in the functions because the argument works the same for $\gamma_+$ and $\gamma_-$. In this case, the generating curve $\gamma\equiv\gamma_\pm$ is a solution of \eqref{system1}-\eqref{system3} with initial conditions \eqref{conditions}. To check the regularity at $r=0$ we first extend the curve $\gamma$ to the values $r<0$, reflecting it across the $z$-axis. The reflection of $\gamma(s)=(r(s),z(s))$ across the $z$-axis is defined by $\widetilde{\gamma}(s)=(\widetilde{r}(s),\widetilde{z}(s))$, where
\begin{equation*}
	\widetilde{r}(s)=-r(-s)\,,\quad\quad\quad
	\widetilde{z}(s)=z(-s)\,,\quad\quad\quad
	\widetilde{\varphi}(s)=-\varphi(-s)\,.
\end{equation*}
From Proposition \ref{reflection}, reflections across vertical planes preserve \eqref{RME0} and, hence, it follows that $\{\widetilde{r}(s),\widetilde{z}(s),\widetilde{\varphi}(s)\}$ satisfies \eqref{system1}-\eqref{system3}. Due to the initial conditions \eqref{conditions}, the cut with the $z$-axis is orthogonal and, hence, $X$ is of class $\mathcal{C}^1$ at $r=0$. In fact, we have $\widetilde{\gamma}(0)=(0,z_0)=\gamma(0)$ and $\widetilde{\gamma}'(0)=(1,0)=\gamma'(0)$, since $\widetilde{\varphi}(0)=-\varphi(0)=0$. For the second derivative, we have
$$\gamma''(s)=\left(-\varphi'(s)\sin\varphi(s),\varphi'(s)\cos\varphi(s)\right),$$
and
$$\widetilde{\gamma}''(s)=\left(\varphi'(-s)\sin\varphi(-s),\varphi'(-s)\cos\varphi(-s)\right).$$
Using that $\varphi(0)=0$, we obtain that $\gamma''(0)=\widetilde{\gamma}''(0)$ as long as $\varphi'(0)$ is well defined. From \eqref{ai}, we have $\varphi'(0)=-1/z_0-c_o$. We then conclude that the immersion $X$ belongs to $\mathcal{C}^2$ at $r=0$. Differentiating $\gamma$ and $\widetilde{\gamma}$ once again, we get
$$\gamma'''(s)=\left(-\varphi''(s)\sin\varphi(s)-\varphi'(s)^2\cos\varphi(s),\varphi''(s)\cos\varphi(s)-\varphi'(s)^2\sin\varphi(s)\right),$$
while,
$$\widetilde{\gamma}'''(s)=\left(-\varphi''(-s)\sin\varphi(-s)-\varphi'(-s)^2\cos\varphi(-s),-\varphi''(-s)\cos\varphi(-s)+\varphi'(-s)^2\sin\varphi(-s)\right).$$
Since $\varphi(0)=0$ and $\varphi'(0)=-1/z_0-c_o$, we deduce that $\gamma'''(0)=\widetilde{\gamma}'''(0)$ if and only if $\varphi''(0)=0$. Differentiating \eqref{system3} and using the initial conditions \eqref{conditions}, we get
$$\varphi''(0)=\lim_{s\to 0}\left(-\frac{\varphi'(s)\cos\varphi(s)}{r(s)}+\frac{\sin\varphi(s)\cos\varphi(s)}{r^2(s)}\right)=\lim_{s\to 0} \frac{-\varphi'(s)r(s)+\sin\varphi(s)}{r^2(s)}=-\frac{\varphi''(0)}{2}\,,$$
where in the last equality we have applied L'H\^{o}pital's rule again. This shows that $\varphi''(0)=0$ and, therefore, $X$ belongs to $\mathcal{C}^3$ at $r=0$. We now recall that from Theorem \ref{z=0}, a $\mathcal{C}^3$ immersion satisfying \eqref{RME0} is a Helfrich surface and, hence, real analytic (c.f., Remark \ref{elliptic}). Our immersion $X$ belongs to $\mathcal{C}^3$ around $r=0$ and so it is real analytic there.

We study now the regularity of the immersion $X$ at $z=0$. In order to do that, we will proceed in a similar way to the case $r=0$. Observe that for simplicity with the signs we are assuming that both $\gamma_+$ and $\gamma_-$ are contained in the half-plane $\{z\geq 0\}$. Consequently, we need to study the regularity with what $\gamma_+$ meets $\widetilde{\gamma}_-$, i.e., the reflection of $\gamma_-$ across the $r$-axis. The curve $\widetilde{\gamma}_-$ is defined by $\widetilde{\gamma}_-(s)=(\widetilde{r}_-(s),\widetilde{z}_-(s))$ where
\begin{equation*}
	\widetilde{r}_-(s)=r_-(\ell_--s)\,,\quad\quad\quad
	\widetilde{z}_-(s)=-z_-(\ell_--s)\,,\quad\quad\quad
	\widetilde{\varphi}_-(s)=-\pi-\varphi_-(\ell_--s)\,.
\end{equation*}
Observe that the arc length parameter of $\widetilde{\gamma}_-$ is measured beginning on the $r$-axis, while that of $\gamma_+$ ends on the $r$-axis. Since both $\gamma_+$ and $\gamma_-$ meet the $r$-axis orthogonally at the same point we have $\gamma_+(\ell_+)=(r_*,0)=\widetilde{\gamma}_-(0)$ and $\gamma'_+(\ell_+)=(0,-1)=\widetilde{\gamma}'_-(0)$. This shows that $X$ is of class $\mathcal{C}^1$ at $z=0$. The second derivatives of $\gamma_+$ and $\widetilde{\gamma}_-$ are
$$\gamma''_+(s)=\left(-\varphi_+'(s)\sin\varphi_+(s),\varphi'_+(s)\cos\varphi_+(s)\right),$$
and
$$\widetilde{\gamma}_-''(s)=\left(-\varphi_-'(\ell_--s)\sin\varphi_-(\ell_--s),-\varphi_-'(\ell_--s)\cos\varphi_-(\ell_--s)\right).$$
Using that $\varphi_+(\ell_+)=\varphi_-(\ell_-)=-\pi/2$, we conclude that $\gamma_+''(\ell_+)=\widetilde{\gamma}_-''(0)$ if and only if $\varphi_+'(\ell_+)=\varphi_-'(\ell_-)$ and they are well defined. Applying L'H\^{o}pital's rule to \eqref{system3}, we get $\varphi_+'(\ell_+)=2c_o-1/r_*=\varphi'_-(\ell_-)$. Consequently, the immersion $X$ is of class $\mathcal{C}^2$ at $z=0$. 
\end{proof}

Computing the third derivative of $\gamma_+$ and $\widetilde{\gamma}_-$ at $z=0$ (see the proof of Theorem \ref{regularity}) we deduce the following consequence.  
    
\begin{corollary}\label{H'}
Let $X:\Sigma_+\cup\Sigma_-\longrightarrow\r^3$ be the axially symmetric immersion constructed with the gluing procedure of Theorem \ref{regularity}. Then $X$ is of class $\mathcal{C}^3$ at $z=0$  (and, hence, real analytic), if and only if $\partial_{n_+}H=-\partial_{n_-}H$ at $z=0$, where $n_+$ and $n_-$ are the outward pointing conormals to $\partial\Sigma_+$ and $\partial\Sigma_-$, respectively.
\end{corollary}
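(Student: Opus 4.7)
The plan is to continue the derivative-matching argument from the proof of Theorem \ref{regularity} one order higher. Differentiating the expressions there obtained for $\gamma_+''(s)$ and $\widetilde\gamma_-''(s)$ and evaluating at the gluing boundary (where $\varphi_+(\ell_+)=\widetilde\varphi_-(0)=-\pi/2$, so $\cos\varphi=0$ and $\sin\varphi=-1$), I expect
\begin{align*}
\gamma_+'''(\ell_+) &= \bigl(\varphi_+''(\ell_+),\,(\varphi_+'(\ell_+))^2\bigr),\\
\widetilde\gamma_-'''(0) &= \bigl(-\varphi_-''(\ell_-),\,(\varphi_-'(\ell_-))^2\bigr).
\end{align*}
The second components already agree since $\varphi_+'(\ell_+)=\varphi_-'(\ell_-)=2c_o-1/r_*$ was established in Theorem \ref{regularity}. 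Consequently, the glued immersion $X$ is of class $\mathcal{C}^3$ at $z=0$ if and only if $\varphi_+''(\ell_+)=-\varphi_-''(\ell_-)$.

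The next step is to translate this scalar equality into a statement about $\partial_n H$. For an axially symmetric surface the mean curvature along a generating curve satisfies $2H=\varphi'(s)+\sin\varphi(s)/r(s)$, and differentiating in arc length yields
$$2H_s=\varphi''(s)+\frac{\cos\varphi(s)\,\varphi'(s)}{r(s)}-\frac{\sin\varphi(s)\cos\varphi(s)}{r^2(s)}.$$
At $s=\ell_\pm$ both $\cos\varphi$-terms drop out, so $H_s(\ell_\pm)=\tfrac{1}{2}\varphi_\pm''(\ell_\pm)$. In the reflected parameterization of Theorem \ref{regularity}, the tangent vector $(0,-1)$ at $s=\ell_+$ is exactly the outward conormal $n_+$ to $\partial\Sigma_+$, hence $\partial_{n_+}H=\tfrac{1}{2}\varphi_+''(\ell_+)$. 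For $\Sigma_-$ one argues via the reflection across $\{z=0\}$: by Proposition \ref{reflection} this reflection preserves both the reduced membrane equation and $H$, while it sends the actual outward conormal $(0,1)$ of $\partial\Sigma_-$ to the direction $(0,-1)$ of increasing arc length in the reflected parameterization; I therefore expect $\partial_{n_-}H=\tfrac{1}{2}\varphi_-''(\ell_-)$, and the matching condition becomes $\partial_{n_+}H=-\partial_{n_-}H$.

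The upgrade from $\mathcal{C}^3$ to real analyticity is then immediate: any $\mathcal{C}^3$ closed surface satisfying \eqref{RME0} is a Helfrich surface by Theorem \ref{z=0}, and hence real analytic by elliptic regularity (Remark \ref{elliptic}). The one point requiring care is the sign bookkeeping in the reflection step, since $\widetilde\gamma_-$ is parameterized opposite to the natural outward direction on $\Sigma_-$; tracking this orientation reversal is what makes the final condition antisymmetric in $\pm$ rather than symmetric.
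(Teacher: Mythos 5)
Your proposal is correct and follows essentially the same route as the paper: differentiate the reflected parameterizations once more, use $\varphi_\pm(\ell_\pm)=-\pi/2$ and $\varphi_+'(\ell_+)=\varphi_-'(\ell_-)$ to reduce $\mathcal{C}^3$ matching to $\varphi_+''(\ell_+)=-\varphi_-''(\ell_-)$, and then convert this via $H'(\ell_\pm)=\tfrac12\varphi_\pm''(\ell_\pm)$ from \eqref{H} into the conormal-derivative condition. Your explicit tracking of the orientation reversal under the reflection is exactly the sign bookkeeping the paper compresses into the remark that the arc length derivative points in the outward direction.
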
 
\begin{proof}
Consider the axially symmetric immersion $X:\Sigma_+\cup\Sigma_-\longrightarrow\mathbb{R}^3$ obtained in Theorem \ref{regularity} from $X_+:\Sigma_+\longrightarrow\mathbb{R}^3$ and $X_-:\Sigma_-\longrightarrow\mathbb{R}^3$ and denote by $\gamma_+$ and $\gamma_-$ the generating curves of the top $X_+(\Sigma_+)$ and bottom $X_-(\Sigma_-)$ disc type surfaces, respectively. In addition, as already done in the proof of Theorem \ref{regularity}, assume for simplicity with the signs that both $\gamma_+$ and $\gamma_-$ are contained in $\{z\geq 0\}$ (for this we need to reflect first $\gamma_-$ across the $r$-axis).

Under this assumptions we are in the same setting as in Theorem \ref{regularity}. Hence, we can differentiate again $\gamma''_+(s)$ and $\widetilde{\gamma}''_-(s)$ (recall that $\widetilde{\gamma}_-$ is the reflection of $\gamma_-$ across the $r$-axis as defined in the proof of Theorem \ref{regularity}) obtaining
$$\gamma_+'''(s)=\left(-\varphi_+''(s)\sin\varphi_+(s)-\varphi'_+(s)^2\cos\varphi_+(s),\varphi_+''(s)\cos\varphi_+(s)-\varphi_+'(s)^2\sin\varphi_+(s)\right),$$
and
\begin{eqnarray*}
\widetilde{\gamma}_-'''(s)&=&(\varphi_-''(\ell_--s)\sin\varphi_-(\ell_--s)+\varphi_-'(\ell_--s)^2\cos\varphi_-(\ell_--s),\\
&&\varphi_-''(\ell_--s)\cos\varphi_-(\ell_--s)-\varphi_-'(\ell_--s)^2\sin\varphi_-(\ell_--s)).
\end{eqnarray*}
Using that $\varphi_+(\ell_+)=\varphi_-(\ell_-)=-\pi/2$ and $\varphi_+'(\ell_+)=\varphi'_-(\ell_-)=2c_o-1/r_*$ we deduce that $\gamma_+'''(\ell_+)=\widetilde{\gamma}_-'''(0)$ if and only if $\varphi_+''(\ell_+)=-\varphi_-''(\ell_-)$.

Finally, for both $X_\pm(\Sigma_\pm)$ the expression of the mean curvature is given by
\begin{equation}\label{H}
	H(s)=\frac{z'(s)}{2r(s)}-\frac{r''(s)}{2z'(s)}=\frac{\sin\varphi(s)}{2r(s)}+\frac{\varphi'(s)}{2}\,,
\end{equation}
for our choice of orientation and arc length parameter $s$ (measured beginning from the top of $\gamma\equiv\gamma_\pm$). Differentiating \eqref{H} and employing that $\varphi_+''(\ell_+)=-\varphi_-''(\ell_-)$, we obtain
$$H'(\ell_+)=\frac{\varphi_+''(\ell_+)}{2}=\frac{-\varphi_-''(\ell_-)}{2}=-H'(\ell_-)\,,$$
where, due to the choice of arc length parameter $s$, the derivative with respect to $s$ points in the outward direction. This finishes the proof.
\end{proof}

\section{Condition for Axially Symmetric Helfrich Spheres}

When the surface has sufficient regularity, the reduced membrane equation \eqref{RME0} is a sufficient condition for \eqref{EL} to hold. In particular, this is true if the immersion is of class $\mathcal{C}^3$ (c.f., Theorem \ref{z=0}). Unfortunately, even though the reduced membrane equation \eqref{RME0} is satisfied everywhere along the closed surfaces constructed in the previous section, those surfaces which are only $\mathcal{C}^2$ (and not $\mathcal{C}^3$) at the intersection with the plane $\{z=0\}$ are not Helfrich surfaces. They are not critical points of the Helfrich energy $\mathcal{H}$, not even in a weak sense (see Lemma \ref{2}  below).

In the following result we prove a necessary and sufficient condition for an axially symmetric closed surface of genus zero satisfying \eqref{RME0} everywhere to be a Helfrich surface.

\begin{theorem}\label{rescaling}
	Let $\Sigma$ be a closed genus zero surface and $X:\Sigma\longrightarrow\r^3$ an axially symmetric immersion with non-constant mean curvature. The immersion is critical for the Helfrich energy $\mathcal{H}$ if and only if, after a suitable rigid motion and translation of the vertical coordinate, it satisfies the reduced membrane equation \eqref{RME0} and the rescaling condition
\begin{equation}\label{rc}
\int_{\Sigma}\left(H+c_o\right)d\Sigma=0\,.
\end{equation}
\end{theorem}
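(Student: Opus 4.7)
The plan is to reduce both implications to a single scalar boundary condition on the equator circle $C = X(\Sigma)\cap\{z=0\}$, and to match that condition with the rescaling integral \eqref{rc} by computing the first variation of $\mathcal{H}$ along the rescaling vector field in two different ways.

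To set the stage under the assumption that \eqref{RME0} holds (after the rigid motion and vertical translation permitted in the statement), Lemma \ref{n2} forces $X(\Sigma)$ to meet $\{z=0\}$ orthogonally in geodesic circles; since $\Sigma$ is an axially symmetric genus-zero surface this intersection is a single circle $C$ of radius $r_*>0$, splitting $\Sigma$ into two axially symmetric disc-type pieces $\Sigma_\pm$. By Theorem \ref{regularity} the immersion is real analytic on $\Sigma\setminus C$ and of class $\mathcal{C}^2$ across $C$, and by Lemma \ref{n1} the Euler-Lagrange equation \eqref{EL} holds pointwise on the interior of each piece. For any smooth $\psi$ on $\Sigma$ I would compute the first variation of $\mathcal{H}$ in the form $\delta\mathcal{H}(\psi)=\int_\Sigma[(H+c_o)\Delta\psi+2(H+c_o)(H(H-c_o)-K)\psi]\,d\Sigma$ (which needs only $\mathcal{C}^2$ regularity of $X$), apply Green's second identity on each $\Sigma_\pm$ separately, use that the outward conormals satisfy $n_+=-n_-$ along $C$ so the $\partial_{n_\pm}\psi$ boundary terms cancel, and cancel the bulk term via \eqref{EL}. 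What survives is
\[
\delta\mathcal{H}(\psi)=-\int_C\bigl(\partial_{n_+}H+\partial_{n_-}H\bigr)\psi\,ds,
\]
so $X$ is critical for $\mathcal{H}$ if and only if $\partial_{n_+}H+\partial_{n_-}H\equiv 0$ along $C$, which by axial symmetry is a single scalar condition.

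The necessity of \eqref{RME0} is the characterization of axially symmetric Helfrich topological spheres established in \cite{PP2} and recalled in the introduction, and the necessity of \eqref{rc} follows from the standard rescaling argument: differentiating $\mathcal{H}[RX]=\int_\Sigma(H+c_oR)^2\,d\Sigma$ at $R=1$ yields $2c_o\int_\Sigma(H+c_o)\,d\Sigma=0$, whence \eqref{rc} once we note that the non-constancy of $H$ rules out $c_o=0$ (a Willmore topological sphere satisfying \eqref{RME0} is round by Theorem 1.1 of \cite{P}, as in the remark following Theorem \ref{z=0}). For sufficiency, assume \eqref{RME0} and \eqref{rc} and evaluate the boundary formula above on the normal component $\psi=X\cdot\nu$ of the rescaling vector field $\delta X = X$. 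Along $C$, orthogonality with $\{z=0\}$ forces $\nu$ to be horizontal and radial so $X\cdot\nu\equiv r_*$; axial symmetry makes $\partial_{n_\pm}H$ constant along $C$; and $C$ has length $2\pi r_*$. Hence the boundary formula gives $\delta\mathcal{H}(X\cdot\nu)=-2\pi r_*^2\bigl(\partial_{n_+}H+\partial_{n_-}H\bigr)$, while direct differentiation yields $\left.\tfrac{d}{dR}\right|_{R=1}\mathcal{H}[RX]=2c_o\int_\Sigma(H+c_o)\,d\Sigma$, and these two quantities agree because $\delta\mathcal{H}$ depends only on the normal component of the variation. Equating them delivers the key identity
\[
c_o\int_\Sigma(H+c_o)\,d\Sigma=-\pi r_*^2\bigl(\partial_{n_+}H+\partial_{n_-}H\bigr),
\]
so with $c_o\neq 0$ and $r_*>0$ the rescaling condition \eqref{rc} is equivalent to $\partial_{n_+}H+\partial_{n_-}H=0$, which by the first paragraph is equivalent to $X$ being critical.

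The main obstacle is the low regularity at $C$: because $X$ is in general only $\mathcal{C}^2$ there, the fourth-order equation \eqref{EL} does not hold across $C$ in any distributional sense and the entire obstruction to criticality concentrates in the jump $\partial_{n_+}H+\partial_{n_-}H$ of the normal derivative of $H$ across $C$. The content of the theorem is the identification of this local scalar jump with the global integral \eqref{rc}, and it is exactly the two-ways evaluation of $\delta\mathcal{H}$ along the rescaling that accomplishes this, using only the orthogonality of $\Sigma$ to $\{z=0\}$, axial symmetry, and the radial nature of $\nu$ on $C$. Equivalently, one could invoke Corollary \ref{H'} to turn $\partial_{n_+}H+\partial_{n_-}H=0$ into $\mathcal{C}^3$ regularity and then apply Theorem \ref{z=0}, but the variational identity above is cleaner and self-contained.
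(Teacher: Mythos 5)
Your proposal is correct and its overall architecture coincides with the paper's: necessity of \eqref{RME0} via the flux argument of \cite{PP2} (the paper's Lemma \ref{1}), reduction of criticality to the single scalar jump condition $\partial_{n_+}H+\partial_{n_-}H=0$ along the equator $C$ (the paper's Lemma \ref{2} together with Corollary \ref{H'}), and the identification of that jump with the rescaling integral through the identity $2c_o\int_\Sigma(H+c_o)\,d\Sigma=-2\pi r_*^2\left(\partial_{n_+}H+\partial_{n_-}H\right)$ (the paper's Lemma \ref{3}). The one place where you genuinely depart from the paper is in how you obtain that last identity: the paper integrates the conservation law $2c_o(H+c_o)=\nabla\cdot\mathcal{J}_X$, imported from \cite{PP2}, over each hemisphere and applies the divergence theorem, whereas you test the first-variation boundary formula against the normal component $q=X\cdot\nu\equiv r_*$ of the rescaling field and compare with the direct computation $\frac{d}{dR}\big\vert_{R=1}\mathcal{H}[RX]=2c_o\int_\Sigma(H+c_o)\,d\Sigma$. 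These are two faces of the same Noether-type identity for the scaling symmetry, so your route buys self-containedness (no need to quote the explicit form of $\mathcal{J}_X$) at the cost of a small extra justification: your boundary formula was derived for $\psi\in\mathcal{C}_o^\infty(\Sigma)$, while $q$ is only $\mathcal{C}^1$ across $C$ (and you must also argue that the tangential part of $\delta X=X$ contributes nothing, which is fine for a closed surface). Both points are handled by running the computation on $\Sigma_{\epsilon_\pm}=\Sigma\cap\{\pm z>\epsilon\}$, where everything is real analytic, and letting $\epsilon\to 0$ exactly as the paper does in Lemmas \ref{2} and \ref{3}; the surviving boundary term is $-q\,\partial_{n_\pm}H$ with $q=r_*$, so your constants agree with the paper's. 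This is a technical elision rather than a gap, but it should be made explicit.
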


\begin{rem}\label{rnew} Requesting that the mean curvature of the immersion is non-constant necessarily implies that $c_o\neq 0$. Indeed, if $c_o=0$, the only axially symmetric topological spheres critical for the Helfrich energy, which in this case is just the Willmore energy (recall that the Willmore energy is rescaling invariant), are round spheres. This is expected since the unique solution to \eqref{system1}-\eqref{system3} with initial conditions \eqref{conditions} is (a part of) a circle (c.f., Remark \ref{circle}) which after rotation around the $z$-axis and the gluing procedure of previous section gives rise to a round sphere of arbitrary radius (c.f., Remark \ref{cylinder}). On the other hand, when $c_o\neq 0$, the rescaling condition \eqref{rc} makes sense (see Theorem \ref{z=0}).
\end{rem}
  
The proof of Theorem \ref{rescaling} will follow from three lemmas. Throughout these lemmas, let $X:\Sigma\longrightarrow\r^3$ be an axially symmetric immersion with non-constant mean curvature of a closed surface of genus zero. Without loss of generality we may assume, after a rigid motion if necessary, that the axis of rotation is the $z$-axis.

We first show that if $X$ is critical for the Helfrich energy $\mathcal{H}$, then the reduced membrane equation \eqref{RME0} holds.

\begin{lemma}\label{1}
	Assume that the immersion $X$ is critical for the Helfrich energy $\mathcal{H}$. Then, after a suitable translation of the vertical coordinate, \eqref{RME0} holds on $\Sigma$.
\end{lemma}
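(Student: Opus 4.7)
The strategy is to exhibit two elements of the kernel of a single second order linear elliptic operator and then use axial symmetry to force them to be proportional. Set $F := z(H+c_o) + \nu_3$ and $L := \Delta + 2\bigl(H(H-c_o) - K\bigr)$. The first step is to show that both $L(H+c_o) = 0$ and $LF = 0$ on $\Sigma$. The first identity is simply the Euler--Lagrange equation \eqref{EL}. For the second, two ingredients that hold without assuming \eqref{RME0} are available: the identity $\Delta z = 2H\nu_3$ of \eqref{Dz}, and the identity
\[
\Delta \nu_3 = -(4H^2 - 2K)\nu_3 - 2\,\nabla H\cdot\nabla z,
\]
which follows from $\delta H = 0$ under the rigid vertical translation variation $\delta X = E_3$ (this is the identity displayed immediately before \eqref{nu3} in the proof of Lemma \ref{n1}, obtained prior to invoking \eqref{RME0}). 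Combining these via the product rule for the Laplacian and simplifying yields
\[
\Delta F + 2\bigl(H(H-c_o) - K\bigr)F \;=\; z\bigl[\Delta H + 2(H+c_o)\bigl(H(H-c_o) - K\bigr)\bigr],
\]
whose right-hand side vanishes by \eqref{EL}. Hence $F$ and $H+c_o$ both lie in $\ker L$, and, by Remark \ref{elliptic}, they are real analytic on $\Sigma$.

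Next, I exploit axial symmetry to count dimensions. Parameterizing the generating curve of $\Sigma$ by arc length $s\in[0,\ell]$ with $s=0$ and $s=\ell$ the two poles on the axis of rotation, any axially symmetric $\mathcal{C}^2$ function is a function of $s$ alone, and $Lf = 0$ reduces to the second order ODE
\[
f''(s) + \frac{\cos\varphi(s)}{r(s)}\, f'(s) + 2\bigl(H(H-c_o) - K\bigr)(s)\, f(s) = 0.
\]
At each pole, smoothness of the surface of revolution forces $|\cos\varphi| = 1$ and $r(s)$ to vanish linearly, so the coefficient $\cos\varphi/r$ has a simple pole of residue $\pm 1$. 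Each pole is therefore a regular singular point whose indicial equation has the double root $\lambda = 0$. Standard Frobenius theory then provides, at each pole, one analytic local solution and one that necessarily carries a nontrivial $\log$ term. Imposing regularity (in fact $\mathcal{C}^0$) at both poles therefore cuts the two-dimensional ODE solution space down to a subspace of dimension at most one. Since the hypothesis that $H$ is non-constant precludes $H + c_o \equiv 0$, this subspace is exactly one-dimensional and is spanned by $H + c_o$.

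Consequently, the real analytic axially symmetric function $F$ must be a constant multiple of $H + c_o$: there exists $A \in \r$ with $z(H+c_o) + \nu_3 \equiv A(H+c_o)$, equivalently $(z - A)(H+c_o) + \nu_3 \equiv 0$ on $\Sigma$. Translating the vertical coordinate by $z \mapsto z - A$ converts this into $z(H+c_o) + \nu_3 = 0$, which is precisely the reduced membrane equation \eqref{RME0}. The main obstacle is the dimension count in the second paragraph: one must rule out a two-dimensional axially symmetric kernel, and this is guaranteed because the double indicial root at each regular singular point necessarily produces a genuine logarithmic second solution, so regularity at either pole alone already forces the kernel to be at most one-dimensional.
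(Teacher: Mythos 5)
Your proof is correct, and it takes a genuinely different route from the one in the paper. The paper's proof truncates the surface to an axially symmetric disc $\Sigma_\epsilon$ near a pole, invokes the flux argument of Theorem 4.1 of \cite{PP2} to obtain \eqref{RME0} there (after a vertical translation), and then propagates the identity to all of $\Sigma$ by real analyticity. You instead make the mechanism behind that flux argument explicit and self-contained: the verification that $F=z(H+c_o)+\nu_3$ and $H+c_o$ both lie in $\ker L$ for $L=\Delta+2\left(H(H-c_o)-K\right)$ is correct and uses only \eqref{EL}, the identity \eqref{Dz}, and the translation identity for $\Delta\nu_3$ from the proof of Lemma \ref{n1}, none of which presuppose \eqref{RME0}; and the Frobenius analysis at a pole (double indicial root $0$, so the second solution necessarily carries a logarithm) correctly reduces the axially symmetric, continuous part of $\ker L$ to the span of $H+c_o$, which is nontrivial exactly because $H$ is non-constant --- the same place where the paper implicitly uses $H\not\equiv -c_o$. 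Your argument is in fact the ODE-theoretic counterpart of the flux argument: for two solutions $f,g$ of the reduced ODE the quantity $r(fg'-gf')$ is constant, i.e.\ the field $(H+c_o)\nabla F-F\nabla(H+c_o)$ is divergence free, and its vanishing at the pole forces proportionality; your regular-singular-point analysis replaces the shrinking-flux computation. What your version buys is independence from \cite{PP2}; what it requires in exchange is the analyticity of the profile data $r,\varphi,H,K$ at the poles, which follows from the real analyticity of Helfrich surfaces (Remark \ref{elliptic}) --- the same elliptic-regularity input the paper's proof already relies on to pass from ``critical'' to ``real analytic classical solution of \eqref{EL}''.
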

\begin{proof} Cut the axially symmetric surface $X(\Sigma)$ with a suitable horizontal plane so that the part above this plane and close enough to the $z$-axis is an axially symmetric disc type surface, which we denote by $\Sigma_\epsilon$. Since the Euler-Lagrange equation \eqref{EL} was satisfied on the original surface $\Sigma$, it also holds on $\Sigma_\epsilon$.

Then, the flux argument of Theorem 4.1 of \cite{PP2} employed on $\Sigma_\epsilon$ shows that the reduced membrane equation \eqref{RME0} must hold on $\Sigma_\epsilon$, possibly after a suitable translation of the vertical coordinate. We point out here that to apply this argument we are implicitly using that $H\neq -c_o$ holds.

Recall that, by elliptic regularity, Helfrich surfaces are real analytic. Since on a part of $\Sigma$, namely on $\Sigma_\epsilon$, \eqref{RME0} holds, it follows from the real analyticity that the same equation must hold on the entire $\Sigma$. 
\end{proof}

Before stating the next lemma, we recall that a sufficiently regular immersion $X$ of a closed surface satisfying \eqref{RME0} must intersect the plane $\{z=0\}$ (see Theorem \ref{z=0}). We next prove that $\mathcal{C}^3$ regularity at the points where $z=0$ holds is equivalent to the criticality of the immersion. 

\begin{lemma}\label{2}
	Assume that the immersion $X$ satisfies \eqref{RME0}. Then, $X$ is critical for the Helfrich energy $\mathcal{H}$ if and only if $X$ is of class $\mathcal{C}^3$ at the parallel $z=0$.
\end{lemma}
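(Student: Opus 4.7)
The forward direction is immediate: if $X$ is of class $\mathcal{C}^3$ at the parallel $z=0$, then combined with Theorem \ref{regularity} (which gives real analyticity on $\Sigma\setminus\{z=0\}$), $X$ is globally $\mathcal{C}^3$. Since $X$ satisfies \eqref{RME0}, Lemma \ref{n1} then shows that $X$ is critical for $\mathcal{H}$.

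For the converse, assume $X$ is critical for $\mathcal{H}$ and denote $C:=X^{-1}(\{z=0\})$ and $\Sigma = \Sigma_+\cup\Sigma_-$ the decomposition along $C$. By Theorem \ref{regularity}, $X$ is real analytic on each half and satisfies \eqref{RME0} there; thus, by Lemma \ref{n1}, the classical fourth order Euler-Lagrange equation \eqref{EL} holds pointwise on $\Sigma\setminus C$. I will compute $\delta\mathcal{H}[\Sigma]$ for an arbitrary $\psi\in\mathcal{C}_o^\infty(\Sigma)$ starting from the raw form valid for any $\mathcal{C}^2$ immersion,
\begin{equation*}
\delta\mathcal{H}[\Sigma] = \int_\Sigma (H+c_o)\Delta\psi\,d\Sigma + \int_\Sigma 2(H+c_o)\bigl(H(H-c_o)-K\bigr)\psi\,d\Sigma\,,
\end{equation*}
then integrate the first term by parts twice separately on $\Sigma_+$ and $\Sigma_-$, which is legitimate because $H$ is real analytic on each open piece, collecting boundary contributions along $C$.

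Summing the two halves, the bulk contributions combine into $\int_\Sigma[\Delta H+2(H+c_o)(H(H-c_o)-K)]\psi\,d\Sigma$, which vanishes identically by \eqref{EL} on $\Sigma\setminus C$, while the boundary contributions along $C$ reduce to
\begin{equation*}
\int_C (H+c_o)\bigl(\partial_{n_+}\psi+\partial_{n_-}\psi\bigr)d\sigma - \int_C\bigl(\partial_{n_+}H+\partial_{n_-}H\bigr)\psi\,d\sigma\,,
\end{equation*}
where $n_\pm$ denotes the outward conormal to $\partial\Sigma_\pm$. Since Theorem \ref{regularity} gives $\mathcal{C}^2$ matching across $C$, the tangent plane to $\Sigma$ is continuous along $C$ and the two outward conormals therefore satisfy $n_+=-n_-$ as vectors in $\r^3$; for a smooth test function this gives $\partial_{n_+}\psi+\partial_{n_-}\psi=0$, so the first boundary integral vanishes. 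Criticality $\delta\mathcal{H}[\Sigma]=0$ for all $\psi$ then reduces to $\int_C(\partial_{n_+}H+\partial_{n_-}H)\psi\,d\sigma=0$ for all $\psi$, whence $\partial_{n_+}H=-\partial_{n_-}H$ along $C$. By Corollary \ref{H'} this is precisely the $\mathcal{C}^3$ condition at $z=0$, completing the proof.

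The main obstacle I anticipate is correctly tracking the boundary terms generated by the double integration by parts on each half: because $X$ is only $\mathcal{C}^2$ at $C$, the conormal derivative of $H$ may jump and the boundary contributions from the two pieces do not cancel on general grounds. The geometric identity $n_+=-n_-$ arising from the $\mathcal{C}^2$ compatibility of tangent planes is what makes the $\partial_n\psi$ boundary term disappear and leaves the jump of $\partial_n H$ across $C$ as the sole obstruction to criticality, linking the variational condition to the regularity statement of Corollary \ref{H'}.
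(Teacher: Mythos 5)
Your proposal is correct and follows essentially the same route as the paper: decompose $\Sigma$ into the two halves meeting at the parallel $z=0$, observe that the bulk term vanishes because \eqref{EL} holds away from the singular circle, and reduce criticality to the cancellation of the boundary contributions, where $n_+=-n_-$ kills the $(H+c_o)\,\partial_{n_\pm}\psi$ terms and leaves the jump $\partial_{n_+}H+\partial_{n_-}H$ as the sole obstruction, identified with $\mathcal{C}^3$ regularity via Corollary \ref{H'}. The only cosmetic difference is that the paper truncates at $z=\pm\epsilon$ and lets $\epsilon\to 0$ rather than integrating by parts directly up to the geodesic circle.
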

\begin{proof} Let $\epsilon>0$ be sufficiently small and denote by $\Sigma_{\epsilon_+}$ the part of $\Sigma$ such that $X(\Sigma_{\epsilon_+})\subset\{z>\epsilon\}$. Similarly, let $\Sigma_{\epsilon_-}\subset\Sigma$ be such that $X(\Sigma_{\epsilon_-})\subset\{z<-\epsilon\}$. Then, for normal variations $\delta X=\psi\nu$, $\psi\in\mathcal{C}^\infty_o(\Sigma)$, we have (see Appendix A of \cite{PP1} for details)
\begin{eqnarray*}
	\delta\left(\int_{\Sigma_{\epsilon_+}\cup\Sigma_{\epsilon_-}}\left(H+c_o\right)^2d\Sigma\right)&=&\int_{\Sigma_{\epsilon_+}\cup\Sigma_{\epsilon_-}}\left(\Delta H+2(H+c_o)\left(H(H-c_o)-K\right)\right)\psi\,d\Sigma\\
	&&+\oint_{\partial\Sigma_{\epsilon_+}}\left((H+c_o)\partial_{n_+}\psi-\partial_{n_+}H\psi\right)d\sigma\\
	&&+\oint_{\partial\Sigma_{\epsilon_-}}\left((H+c_o)\partial_{n_-}\psi-\partial_{n_-}H\psi\right)d\sigma\,,
\end{eqnarray*}
where $n_\pm$ are the (outward pointing) conormals to $\partial\Sigma_{\epsilon_\pm}$, respectively, and $\partial_{n_\pm}$ represent the derivatives in these directions. Since on $\Sigma_{\epsilon_+}\cup\Sigma_{\epsilon_-}$ the reduced membrane equation \eqref{RME0} holds and, from Theorem \ref{regularity}, the surface is real analytic there, it follows that \eqref{EL} is also satisfied (Proposition 4.1 of \cite{PP2}). We then have only the two boundary integrals. The boundaries $\partial\Sigma_{\epsilon_\pm}$ are two parallel circles at heights $z=\pm\epsilon$ and of radius $r_{\epsilon_\pm}$, respectively. Due to the axial symmetry of $X$ the corresponding integrands are constant along these parallels. Hence, we obtain
\begin{eqnarray*}
	\delta\left(\int_{\Sigma_{\epsilon_+}\cup\Sigma_{\epsilon_-}}\left(H+c_o\right)^2d\Sigma\right)&=&2\pi r_{\epsilon_+}\left((H+c_o)\partial_{n_+}\psi-\partial_{n_+}H\psi\right)_{z=\epsilon_+}\\&&+2\pi r_{\epsilon_-}\left((H+c_o)\partial_{n_-}\psi-\partial_{n_-}H\psi\right)_{z=\epsilon_-}\,.
\end{eqnarray*}
Observe that as $\epsilon\to 0$, $r_{\epsilon_\pm}\to r_*$, the radius of the geodesic parallel at height $z=0$, and $H+c_o\to 2c_o-1/r_*$. The last assertion follows from \eqref{H} in combination with $\varphi(\ell)=-\pi/2$ and $\varphi'(\ell)=2c_o-1/r_*$ (c.f., last part of the proof of Theorem \ref{regularity}). Moreover, since at $\epsilon=0$, $n_+=-n_-$, the first and third terms above cancel out. Thus,
$$\delta\mathcal{H}[\Sigma]=\lim_{\epsilon\to 0} \delta\left(\int_{\Sigma_{\epsilon_+}\cup\Sigma_{\epsilon_-}}\left(H+c_o\right)^2d\Sigma\right)=-2\pi r_*\left(\partial_{n_+}H+\partial_{n_-}H\right)_{z=0}\psi\lvert_{z=0}\,.$$
Consequently, $X$ is a critical point for the Helfrich energy $\mathcal{H}$ if and only if, at $z=0$, $\partial_{n_+}H=-\partial_{n_-}H$. Recall that as mentioned in Corollary \ref{H'}, this condition is equivalent to $X$ belonging to $\mathcal{C}^3$ at $z=0$. 
\end{proof}

In the last lemma we show that, if $c_o\neq 0$, $\mathcal{C}^3$ regularity at $z=0$ is equivalent to the mean value of $H+c_o$ vanishing on the surface.

\begin{lemma}\label{3}
	Assume that the immersion $X$ satisfies \eqref{RME0} for $c_o\neq 0$. Then, $X$ is critical for the Helfrich energy $\mathcal{H}$ if and only if \eqref{rc} holds.
\end{lemma}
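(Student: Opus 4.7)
The plan is to invoke Lemma \ref{2}, which reduces the claim to the equivalence between the integral condition \eqref{rc} and the matching condition $(\partial_{n_+}H+\partial_{n_-}H)\big|_{z=0}=0$ characterizing $\mathcal{C}^3$ regularity at $z=0$ via Corollary \ref{H'}. The bridge between the two will come from applying the rescaling argument of Theorem \ref{z=0} not to $\Sigma$ as a whole, but to each of the axially symmetric disc type pieces $\Sigma_+$ and $\Sigma_-$ individually. This converts $\int_{\Sigma_\pm}(H+c_o)\,d\Sigma$ into a boundary integral of $\partial_{n_\pm}H$ along the parallel circle at $z=0$.

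For each piece, the scaling identity $\mathcal{H}[R\Sigma_\pm]=\int_{\Sigma_\pm}(H+c_o R)^2\,d\Sigma$ differentiated at $R=1$ yields $2c_o\int_{\Sigma_\pm}(H+c_o)\,d\Sigma$. On the other hand, the same derivative can be computed via the first variation formula of $\mathcal{H}$ on the surface with boundary $\Sigma_\pm$ applied to the ambient variation field $\delta X=X$. The interior integrand vanishes because the Euler--Lagrange equation \eqref{EL} holds in the interior by Theorem \ref{z=0}, so only the boundary contribution along $\partial\Sigma_\pm\subset\{z=0\}$ survives. At this boundary the surface meets $\{z=0\}$ orthogonally in a horizontal circle of radius $r_*$, so $\nu$ is horizontal radial, $n_\pm=\mp E_3$ is vertical, and in particular $X=r_*\nu$ is purely normal. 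Thus $\psi:=X\cdot\nu=r_*$ is constant along the boundary, $X\cdot n_\pm=0$ (so no conormal boundary-motion term is produced), and $\partial_{n_\pm}\psi=-(H+c_o)(X\cdot n_\pm)=0$, using that $n_\pm$ is a principal direction with principal curvature $H+c_o$ (see the proof of Lemma \ref{n3}). The boundary term then collapses to $-\oint_{\partial\Sigma_\pm}r_*\,\partial_{n_\pm}H\,d\sigma=-2\pi r_*^2\,\partial_{n_\pm}H$ by axial symmetry, giving
$$2c_o\int_{\Sigma_\pm}(H+c_o)\,d\Sigma=-2\pi r_*^2\,\partial_{n_\pm}H\big|_{z=0}.$$

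Adding the $+$ and $-$ identities and dividing by $2c_o\neq 0$ produces
$$\int_\Sigma(H+c_o)\,d\Sigma=-\frac{\pi r_*^2}{c_o}\bigl(\partial_{n_+}H+\partial_{n_-}H\bigr)\big|_{z=0}.$$
The stated equivalence is then immediate: \eqref{rc} holds if and only if $\partial_{n_+}H+\partial_{n_-}H=0$ at $z=0$, if and only if (Corollary \ref{H'}) $X$ is of class $\mathcal{C}^3$ there, if and only if (Lemma \ref{2}) $X$ is critical for $\mathcal{H}$. The main delicate point is the use of the first variation formula on the surfaces with boundary $\Sigma_\pm$ with a variation field that is not compactly supported in the interior; this step goes through cleanly precisely because $\delta X=X$ happens to be purely normal at each boundary circle, so the usual boundary data $\psi$ and $\partial_{n_\pm}\psi$ take the simple values computed above.
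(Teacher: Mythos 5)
Your proposal is correct, and it reaches exactly the same pivotal identity as the paper, namely
$$2c_o\int_{\Sigma_\pm}(H+c_o)\,d\Sigma=-2\pi r_*^2\,\partial_{n_\pm}H\big|_{z=0}\,,$$
after which the endgame (sum the two halves, divide by $2c_o\neq 0$, and invoke Corollary \ref{H'} together with Lemma \ref{2}) coincides with the paper's. The difference lies in how you derive that identity. The paper quotes from \cite{PP2} the pointwise conservation law $2c_o(H+c_o)=\nabla\cdot\mathcal{J}_X$ with $\mathcal{J}_X=(H+c_o)\nabla q-q\nabla(H+c_o)+(H+c_o)^2\nabla X^2/2$, integrates it over each half, and evaluates the resulting flux using $q=r_*$, $X\cdot n_\pm=0$ and $\nabla q=-\kappa_1\nabla X^2/2$ on the geodesic parallel. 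You instead compute the derivative of $\mathcal{H}[R\Sigma_\pm]$ at $R=1$ in two ways: directly from the scaling law, and via the first variation formula with the (non-compactly supported) field $\delta X=X$, observing that along the parallel the field is purely normal with $\psi=q=r_*$ constant, $\partial_{n_\pm}\psi=0$ and $X\cdot n_\pm=0$, so only the $-\oint q\,\partial_{n_\pm}H\,d\sigma$ term survives. These are two faces of the same fact --- the divergence identity is precisely the Noether current of the scaling symmetry --- but your version is self-contained modulo the first variation formula with boundary terms for a general variation field, whereas the paper's leans on the cited identity; conversely, the paper's route avoids having to justify the extra transport term $\oint(H+c_o)^2(\delta X\cdot n_\pm)\,d\sigma$ (which you correctly note vanishes since $X\cdot n_\pm=0$). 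Your boundary computations ($\nu$ horizontal radial, $n_\pm$ vertical, $n_\pm$ a principal direction by Joachimsthal with principal curvature $H+c_o$ from Lemma \ref{n3}) are all consistent with what the paper establishes, so the argument goes through.
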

\begin{proof} Consider the domain $\Sigma_+\subset\Sigma$ to be the part of the surface such that $X(\Sigma_+)\subset\{z\geq 0\}$. Since \eqref{RME0} holds on $\Sigma_+$ and the singularity $z=0$ is not contained in the interior of $\Sigma_+$, \eqref{EL} holds. Thus, $X(\Sigma_+)$ is a Helfrich surface (with boundary). In \cite{PP2}, it was shown that for any Helfrich surface
$$2c_o\left(H+c_o\right)=\nabla\cdot\left[(H+c_o)\nabla q-q\nabla (H+c_o)+(H+c_o)^2\nabla X^2/2\right]=\nabla\cdot \mathcal{J}_X\,,$$
holds, where $\mathcal{J}_X$ denotes the vector field in the square brackets.  Here, $q:=X\cdot\nu$ is the support function. Integrating this over $\Sigma_+$ and applying the divergence theorem we get
$$2c_o\int_{\Sigma_+}\left(H+c_o\right)d\Sigma=\int_{\Sigma_+}\nabla\cdot\mathcal{J}_X\,d\Sigma=\oint_{\partial\Sigma_+}\mathcal{J}_X\cdot n_+\,d\sigma\,,$$
where $n_+$ is the (outward pointing) conormal to the boundary $\partial\Sigma_+$.

The boundary $\partial\Sigma_+$ is a geodesic parallel and, hence, along it $\nabla q=-\kappa_1 \nabla X^2/2$ and $\nabla X^2/2\cdot n_+=X\cdot n_+=0$. Consequently,
\begin{equation}\label{cond+}
	2c_o\int_{\Sigma_+}\left(H+c_o\right)d\Sigma=\oint_{\partial\Sigma_+}-q\,\partial_{n_+}\left(H+c_o\right)d\sigma=-2\pi r_*^2\,\partial_{n_+} \left(H+c_o\right)_{z=0},
\end{equation}
since along $\partial\Sigma_+$ we have that $q=X\cdot \nu=r_*$ is the radius of the boundary circle.

Repeating the same computations for the domain $\Sigma_-\subset\Sigma$ (defined as the part of the surface such that $X(\Sigma_-)\subset\{z\leq 0\}$) we obtain the analogous relation, namely,
\begin{equation}\label{cond-}
	2c_o\int_{\Sigma_-}\left(H+c_o\right)d\Sigma=-2\pi r_*^2 \,\partial_{n_-}\left(H+c_o\right)_{z=0},
\end{equation}
where $n_-$ is the (outward pointing) conormal to the boundary $\partial\Sigma_-$. Observe that $n_-=-n_+$.

Thus, from \eqref{cond+} and \eqref{cond-} we have
$$2c_o\int_\Sigma(H+c_o)\,d\Sigma=2c_o\int_{\Sigma_+}(H+c_o)\,d\Sigma+2c_o\int_{\Sigma_-}(H+c_o)\,d\Sigma=-2\pi r_*^2\left( \partial_{n_+}H+\partial_{n_-}H\right)_{z=0}.$$
From Lemma \ref{2}, $X$ is critical for the Helfrich energy $\mathcal{H}$ if and only if $X$ is of class $\mathcal{C}^3$ at $z=0$, which in turn is equivalent to $\partial_{n_+}H=-\partial_{n_-}H$ at $z=0$ (Corollary \ref{H'}). Thus, since $c_o\neq 0$, from the above relation we have an equivalence between criticality for the Helfrich energy and the condition regarding the mean value of $H+c_o$. \end{proof}

\begin{rem}\label{sphere} If $c_o=0$, an axially symmetric immersion $X:\Sigma\longrightarrow\mathbb{R}^3$ satisfying \eqref{RME0} is a round sphere (c.f., Remark \ref{rnew}).
\end{rem}

We can now proceed to show the statement of Theorem \ref{rescaling}.
\\

\noindent{\emph{Proof of Theorem \ref{rescaling}.} We begin by observing that since the mean curvature of the immersion $X$ is assumed to be non-constant, then $c_o\neq 0$ must hold. To the contrary, if $c_o=0$, by the explanation of Remark \ref{rnew}, the surface $X(\Sigma)$ would be a round sphere which has constant mean curvature, contradicting our request.
	
We next prove the forward implication. Assume that $X$ is critical for the Helfrich energy $\mathcal{H}$. By Lemma \ref{1}, the reduced membrane equation \eqref{RME0} holds on $\Sigma$. Moreover, from a rescaling argument of the Helfrich energy $\mathcal{H}$ as done in the proof of Theorem \ref{z=0} and because $c_o\neq 0$, we have that the rescaling condition \eqref{rc} is satisfied (see also Lemma \ref{3}).

For the converse, assume that \eqref{RME0} and \eqref{rc} are satisfied. From Lemma \ref{3} (which can be applied since $c_o\neq 0$ holds), we conclude that $X$ is critical for the Helfrich energy $\mathcal{H}$. This concludes the proof of the theorem. \hfill$\square$
\\

The result of Theorem \ref{rescaling} introduces a necessary and sufficient condition to determine whether or not closed axially symmetric surfaces satisfying the reduced membrane equation \eqref{RME0} for $c_o\neq 0$ are Helfrich surfaces. This condition is analytical in the sense that involves the computation of an integral, which represents the mean value of $H+c_o$ on the surface. Combining Lemmas \ref{2} and \ref{3}, this means that those surfaces which lack sufficient regularity at the parallel $z=0$ (they are $\mathcal{C}^2$ but not $\mathcal{C}^3$), are not critical points of $\mathcal{H}$.

\section{Examples of Axially Symmetric Helfrich Spheres}

In this section we will obtain examples of axially symmetric Helfrich spheres (other than round ones). For this, we are going to employ Theorem \ref{rescaling} and look for those closed solutions of the reduced membrane equation \eqref{RME0} for $c_o\neq 0$ that satisfy the rescaling condition \eqref{rc}. One possibility for \eqref{rc} to hold is that the mean value of $H+c_o$ on the top $\Sigma_+$ and bottom $\Sigma_-$ parts of the axially symmetric surface are both identically zero. 

Let $\Sigma$ be a closed genus zero surface and $X:\Sigma\longrightarrow\mathbb{R}^3$ an axially symmetric immersion satisfying the reduced membrane equation \eqref{RME0} for $c_o\neq 0$. Denote by $\Sigma_+\subset\Sigma$ the part of the surface such that $X(\Sigma_+)\subset\{z\geq 0\}$ and by $\Sigma_-\subset\Sigma$ the part such that $X(\Sigma_-)\subset\{z\leq 0\}$. If
\begin{equation}\label{hh}
\int_{\Sigma_+}(H+c_o)\,d\Sigma=\int_{\Sigma_-}(H+c_o)\,d\Sigma=0\,,
\end{equation}
holds, then \eqref{rc} is automatically satisfied and, hence, the immersion is critical for the Helfrich energy $\mathcal{H}$.

\begin{rem}\label{sec}
	From formulas \eqref{cond+} and \eqref{cond-}, it follows that the two conditions \eqref{hh} are equivalent to $\partial_{n_+} H=0=\partial_{n_-}H$ at $z=0$. In terms of the generating curve $\gamma\equiv\gamma_\pm$ this means that $\varphi''(\ell)=0$ (c.f., Corollary \ref{H'}).
\end{rem}

We next show that conditions \eqref{hh} imply that the surface is symmetric with respect to the plane $\{z=0\}$.

\begin{proposition}\label{symmetry}
	 Let $\Sigma$ be a closed genus zero surface and $X:\Sigma\longrightarrow\r^3$ an axially symmetric immersion satisfying \eqref{RME0} for $c_o\neq 0$. If \eqref{hh} hold, then the immersion is critical for the Helfrich energy $\mathcal{H}$ and $\{z=0\}$ is a plane of symmetry. Moreover, the vertical component $\nu_3$ of the unit normal $\nu$ has, at least, one change of sign in $X(\Sigma)\cap\{z>0\}$.
\end{proposition}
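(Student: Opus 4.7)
The criticality of $X$ is immediate from Theorem~\ref{rescaling}: adding the two identities in \eqref{hh} yields the rescaling condition \eqref{rc}, so together with the already-assumed \eqref{RME0}, the immersion is critical for $\mathcal{H}$. In particular, Lemma~\ref{2} combined with Lemma~\ref{n1} promotes $X$ to $\mathcal{C}^3$ at the parallel $\{z=0\}$ and hence to real analytic on all of $\Sigma$.

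For the plane of symmetry, my plan is to build a reflection-symmetric competitor $\Sigma^\star$ and then identify it with $X(\Sigma)$ via analytic continuation. Denote by $R$ the reflection across $\{z=0\}$; by Proposition~\ref{reflection}, $R(\Sigma_+)\subset\{z\leq 0\}$ still satisfies \eqref{RME0} and meets the boundary parallel of $\Sigma_+$ orthogonally in the same circle. Glue as in Theorem~\ref{regularity} to obtain a closed axially symmetric topological sphere $\Sigma^\star:=\Sigma_+\cup R(\Sigma_+)$ satisfying \eqref{RME0}, a priori only $\mathcal{C}^2$ at the parallel. To upgrade this to $\mathcal{C}^3$ (and therefore, via Lemma~\ref{n1}, to real analytic), invoke Corollary~\ref{H'}: one must check $\partial_{n_+}H+\partial_{n_-}H=0$ along the parallel. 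Formula \eqref{cond+} combined with the first equality in \eqref{hh} gives $\partial_{n_+}H=0$ on $\Sigma_+$, and since $R(\Sigma_+)$ is a rigid reflection of $\Sigma_+$ (preserving $H$ and turning the outward conormal of $\Sigma_+$ into that of $R(\Sigma_+)$), the analogous vanishing $\partial_{n_-}H=0$ holds on $R(\Sigma_+)$, so the sum is zero.

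Both $\Sigma$ and $\Sigma^\star$ are now closed, real analytic, axially symmetric topological spheres satisfying \eqref{RME0} that share the open piece $\Sigma_+$. Passing to their generating arcs $\gamma,\gamma^\star$, parametrized by arc length from the common north pole on closed intervals $[0,L]$ and $[0,L^\star]$, both are real analytic and agree on $[0,\ell_+]$. By the identity theorem for real analytic maps on a connected interval, they coincide on $[0,\min(L,L^\star)]$. Since a generating arc for a topological sphere meets the $z$-axis only at its two endpoints, the shared interval cannot terminate at an interior point of either arc, forcing $L=L^\star$ and $\gamma=\gamma^\star$. In particular $\Sigma_-=R(\Sigma_+)$, which is the desired plane of symmetry.

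Finally, for the sign change of $\nu_3$, use \eqref{RME0} to rewrite the first equation in \eqref{hh} as $\int_{\Sigma_+}(\nu_3/z)\,d\Sigma=0$. The denominator is positive on the interior $\Sigma_+\cap\{z>0\}$, and at the north pole the outward normal points upward, so $\nu_3=1$ in a neighborhood there. Were $\nu_3$ nonnegative throughout $\Sigma_+\cap\{z>0\}$, the integrand would be a continuous nonnegative function strictly positive near the pole, forcing the integral to be positive and contradicting the vanishing identity; hence $\nu_3$ must take negative values on $\Sigma_+\cap\{z>0\}$. The main obstacle in this plan lies in the identity step of the preceding paragraph, since \eqref{system1}--\eqref{system3} is singular at $z=0$ and uniqueness across the parallel cannot be read off from standard ODE theory; instead one relies on the global real analyticity furnished by \eqref{hh} via Lemmas~\ref{2} and~\ref{n1} together with the identity theorem for analytic maps.
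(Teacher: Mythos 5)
Your proof is correct and follows essentially the same strategy as the paper: deduce criticality from Theorem \ref{rescaling}, build the reflected competitor $\Sigma_+\cup R(\Sigma_+)$, show it is real analytic, and identify it with $X(\Sigma)$ by unique continuation, then read the sign change of $\nu_3$ off the identity $\int_{\Sigma_+}(\nu_3/z)\,d\Sigma=0$. The only (harmless) differences are that you reach the analyticity of the competitor via Corollary \ref{H'} and \eqref{cond+} rather than by applying Theorem \ref{rescaling} to it directly, and you spell out the analytic-continuation step on the generating curves, which the paper leaves implicit.
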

\begin{proof} 
Let $X:\Sigma\longrightarrow\mathbb{R}^3$ be an axially symmetric immersion of a genus zero surface satisfying \eqref{RME0} for $c_o\neq 0$ and assume that conditions \eqref{hh} also hold. It then follows that the rescaling condition \eqref{rc} holds as well and, from Theorem \ref{rescaling}, the surface is Helfrich. 

Recall that Theorem \ref{z=0} guarantees that $X(\Sigma)$ intersects the plane $\{z=0\}$. Assume by contradiction that $X(\Sigma)$ is not symmetric with respect to the plane $\{z=0\}$.  
Then,   $X(\Sigma_+)$ and its reflection $\widetilde{X}(\Sigma_+)$   across the plane $\{z=0\}$  generates a closed genus zero surface $X(\Sigma_+)\cup \widetilde{X}(\Sigma_+)$, symmetric with respect to $\{z=0\}$, and such that \eqref{RME0} holds everywhere. In addition, since by assumption the mean value of $H+c_o$ on $\Sigma_+$ is zero, the rescaling condition \eqref{rc} is satisfied for $X(\Sigma_+)\cup \widetilde{X}(\Sigma_+)$. Hence, applying Theorem \ref{rescaling} this surface is critical for the Helfrich energy and so it is real analytic. We then have two different real analytic surfaces, namely, $X(\Sigma_+)\cup \widetilde{X}(\Sigma_+)$ and $X(\Sigma)$, which coincide on the part contained in $\{z>0\}$. This is impossible and, hence, we have a contradiction.

For the final assertion of the statement, since \eqref{RME0} holds and the surface $X(\Sigma)$ satisfies
$$\int_{\Sigma_+}\left(H+c_o\right)d\Sigma=0\,,$$
we deduce that
$$\int_{\Sigma_+}\frac{\nu_3}{z}\,d\Sigma=0\,.$$
Thus, since $z\geq 0$ holds on $\Sigma_+$, it then follows that $\nu_3$ must have a change of sign in $X(\Sigma)\cap\{z>0\}$. 
\end{proof}

When $-1/c_o<z_0<0$, the solutions of \eqref{system1}-\eqref{system3} with initial conditions \eqref{conditions} are convex graphs over the $r$-axis (see Theorem \ref{TypeII}). Hence, on the (disc type) surface obtained rotating the corresponding curve $\gamma(s)=(r(s),z(s))$, $s\in[0,\ell]$ (recall that $s=\ell$ denotes the value of the arc length parameter at which $\gamma$ meets the $r$-axis), $\nu_3$ does not have a change of sign. Therefore, to get axially symmetric closed Helfrich surfaces of genus zero symmetric with respect to the plane $\{z=0\}$, it is enough to consider only positive values of the initial height $z_0$. In addition, the rescaling conditions \eqref{hh} must be satisfied. These conditions are equivalent to $\varphi''(\ell)=0$ (see Remark \ref{sec}). In conclusion, one needs to find the solutions of \eqref{system1}-\eqref{system3} for $c_o>0$ fixed with initial conditions \eqref{conditions} and a suitable $z_0>0$ such that $\varphi''(\ell)=0$. 

\begin{figure}[h!]
	\centering
	\includegraphics[height=12cm]{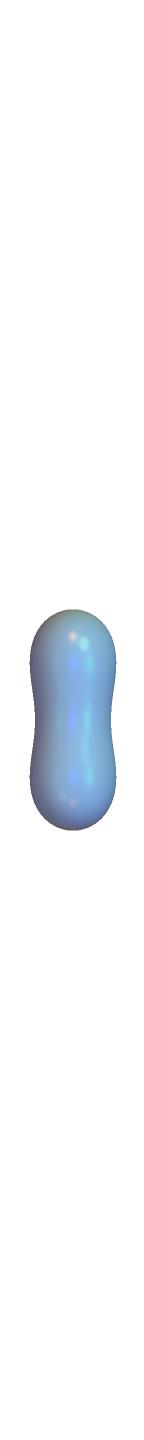}\quad\quad
	\includegraphics[height=12cm]{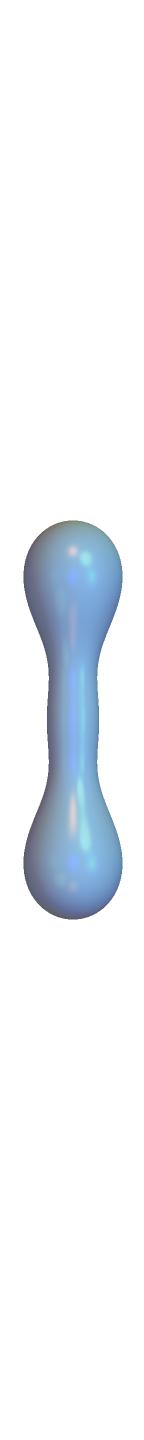}\quad\quad
	\includegraphics[height=12cm]{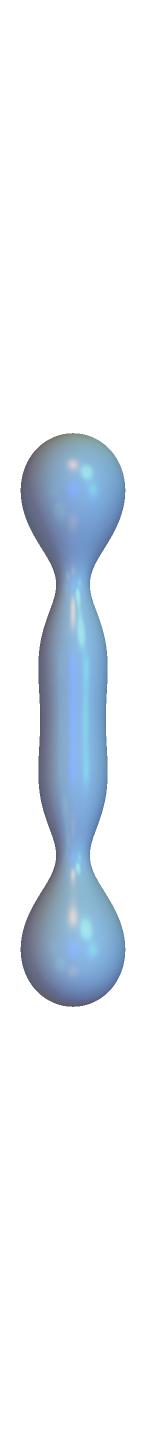}\quad\quad
	\includegraphics[height=12cm]{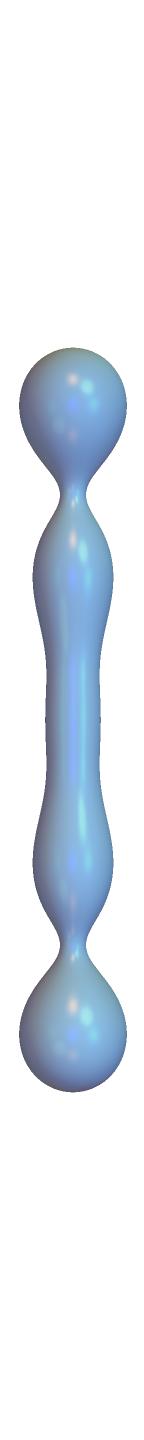}\quad\quad
	\includegraphics[height=12cm]{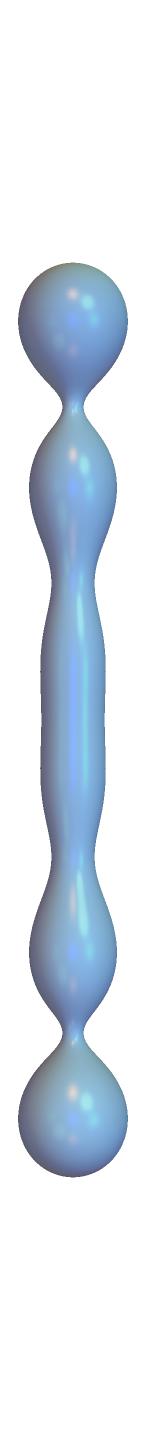}\quad\quad
	\includegraphics[height=12cm]{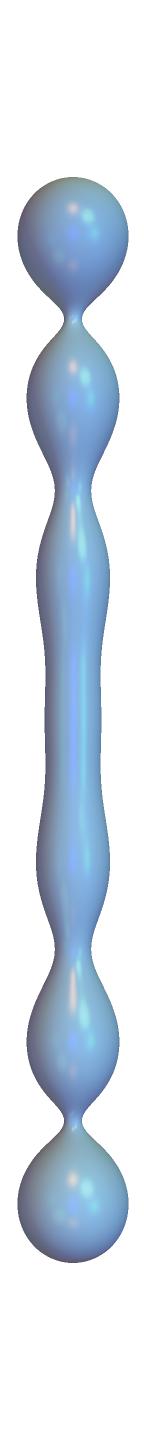}\quad\quad
	\includegraphics[height=12cm]{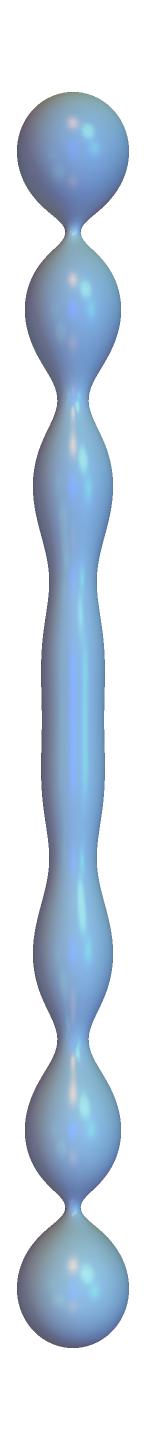}\quad\quad
	\includegraphics[height=12cm]{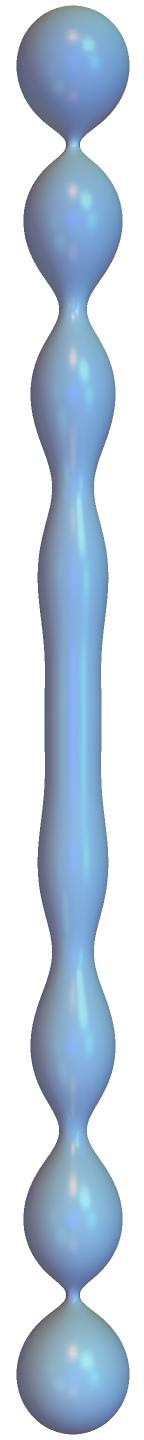}
	\caption{{\small Axially symmetric closed Helfrich surfaces of genus zero with non-constant mean curvature.}}
	\label{Helfrich}
\end{figure}

Figure \ref{Helfrich} reproduces eight axially symmetric closed Helfrich surfaces of genus zero with non-constant mean curvature which, in addition, are symmetric with respect to the plane $\{z=0\}$. The surfaces of Figure \ref{Helfrich} are the first ones that appear as $z_0$ increases from $0$. More examples are given on the final page. We observe that these surfaces can be approximated by gluing together parts of constant mean curvature unduloids. Unduloids depend on two parameters, the constant mean curvature $H_o<0$ and a constant parameter $\tau\in(0,1/\lvert 2H_o\rvert)$. These completely determine an unduloid the following way: if a point $(r,z)$ on the generating curve of an unduloid is related to the
coordinates $(u,v)$ on the generating curve of the sphere via the Gauss map, then
$$2ur+2H_or^2=\tau\,,\quad\quad\quad dz= r_u\:dv\:$$
holds. These equations can be used to parameterize parts of the unduloid via the inverse of its Gauss map. If $H_o<0$ is fixed, the limiting case $\tau=0$ gives a sphere and the case $\tau=1/\lvert 2H_o\rvert$ yields a cylinder. Each fixed surface in the family of axially symmetric Helfrich topological spheres symmetric with respect to $\{z=0\}$ can be approximated by gluing together a sequence of unduloids whose parameter $\tau$ increases from the top part of the surface to the center part. The parameter $H_o$ oscillates  around $-c_o$ in each unduloid of the sequence with very little oscillation. In addition, as $z_0>0$ increases and the surface gets elongated, $H_o$ tends to $-c_o$ and the parameter $\tau$ approaches zero in the top part while it tends to $1/\lvert 2H_o\rvert=1/(2c_o)$ in the center part. This suggests that the top part of the surface approaches spheres with $H=-c_o$ and the center part of the surface approaches a cylinder with mean curvature $H=-c_o$. (This last assertion can also be observed in the center graph on Figure \ref{graphs}.)

In Figure \ref{graphs} we represent the graphs of the functions $\varphi''(\ell)$ (left) and $r_*=r(\ell)$ (center) as the initial height $z_0>0$ varies. To this end, we have fixed several heights $z_0>0$ and solve the system \eqref{system1}-\eqref{system3} for $c_o>0$ with initial conditions \eqref{conditions}. For each solution we have numerically computed the values of $\varphi''$ and $r$ at $z=0$ (i.e., at $s=\ell$). This gave us the red points shown in Figure \ref{graphs}. Then, we have interpolated between these red points to obtain a reliable continuous approximation of the graphs of $\varphi''(\ell)$ and $r(\ell)$ as functions of $z_0$. We observe that the second derivative of the angle $\varphi$ at $z=0$ oscillates around zero as it goes approaching it. This suggests the existence of infinitely many axially symmetric closed Helfrich surfaces of genus zero symmetric with respect to the plane $\{z=0\}$. The first eight zeroes of $\varphi''(\ell)$ correspond to the surfaces of Figure \ref{Helfrich} (the surfaces corresponding to the next eight zeroes can be seen in the last page). Looking at the graph in the center of Figure \ref{graphs} we notice that the radius at which the curve $\gamma(s)=(r(s),z(s))$ solution of \eqref{system1}-\eqref{system3} for the initial conditions \eqref{conditions} meets the $r$-axis oscillates around the value $r=1/(2c_o)$ approaching it (the green straight line). This special value corresponds with the right circular cylinder of radius $1/(2c_o)$ (c.f., Remark \ref{cylinder}). Finally, the planar curve on the right of Figure \ref{graphs} has been constructed by pairing together $\varphi''(\ell)$ and $r(\ell)$. When this curve cuts the vertical axis we obtain $\varphi''(\ell)=0$ and, hence, an axially symmetric closed Helfrich surface of genus zero symmetric with respect to the plane $\{z=0\}$ (see Figure \ref{Helfrich}).

\begin{figure}[h!]
	\centering
	\includegraphics[height=5.4cm]{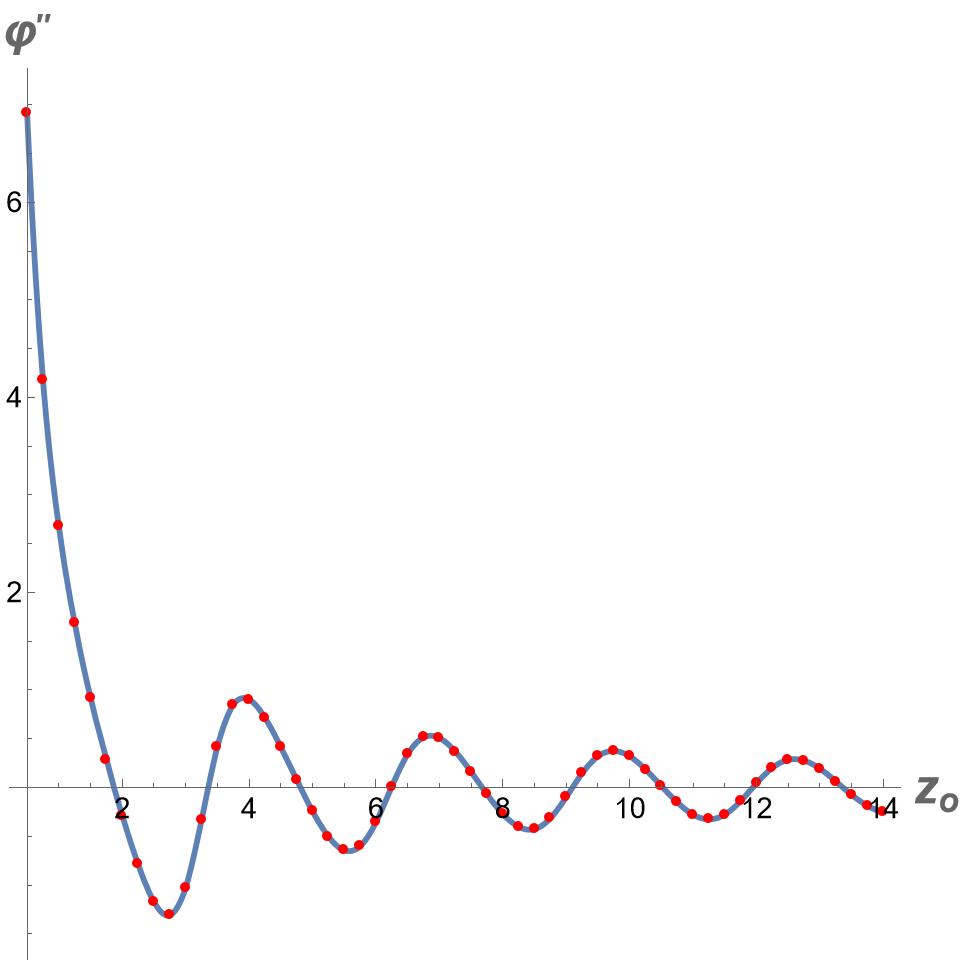}\,\,\,\,
	\includegraphics[height=5.4cm]{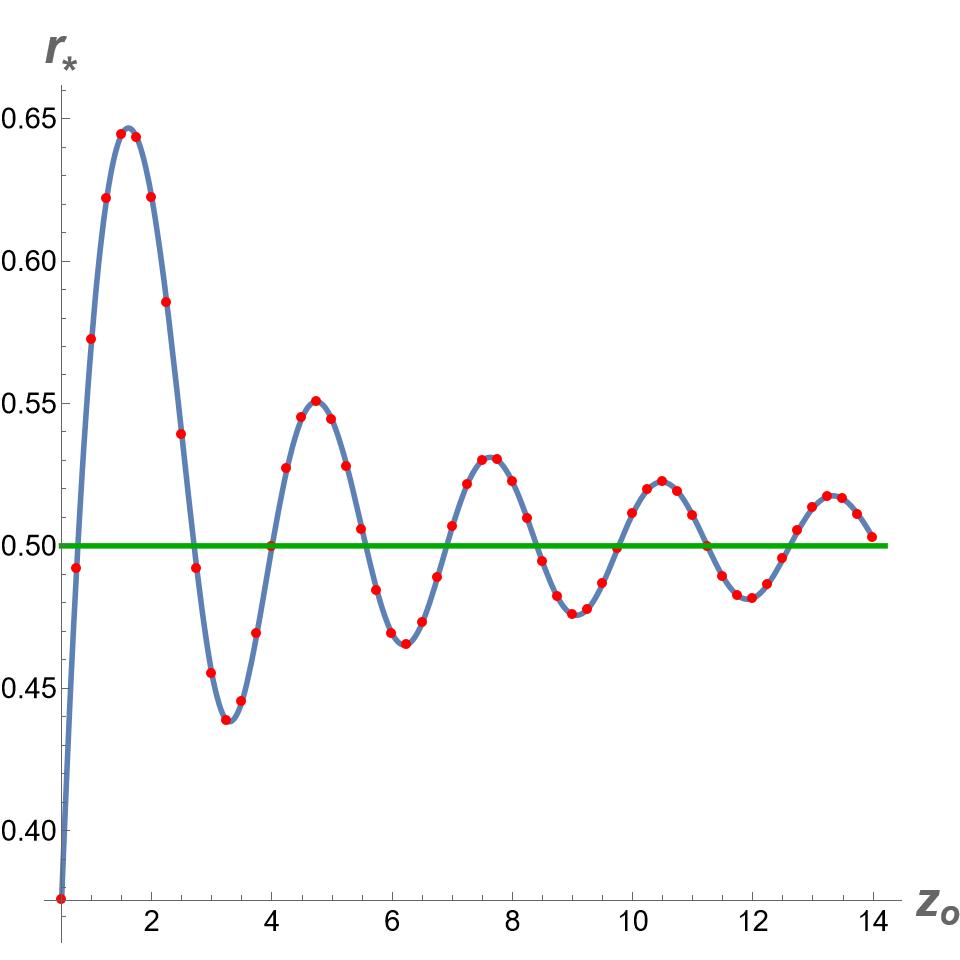}\,\,\,\,
	\includegraphics[height=5.4cm]{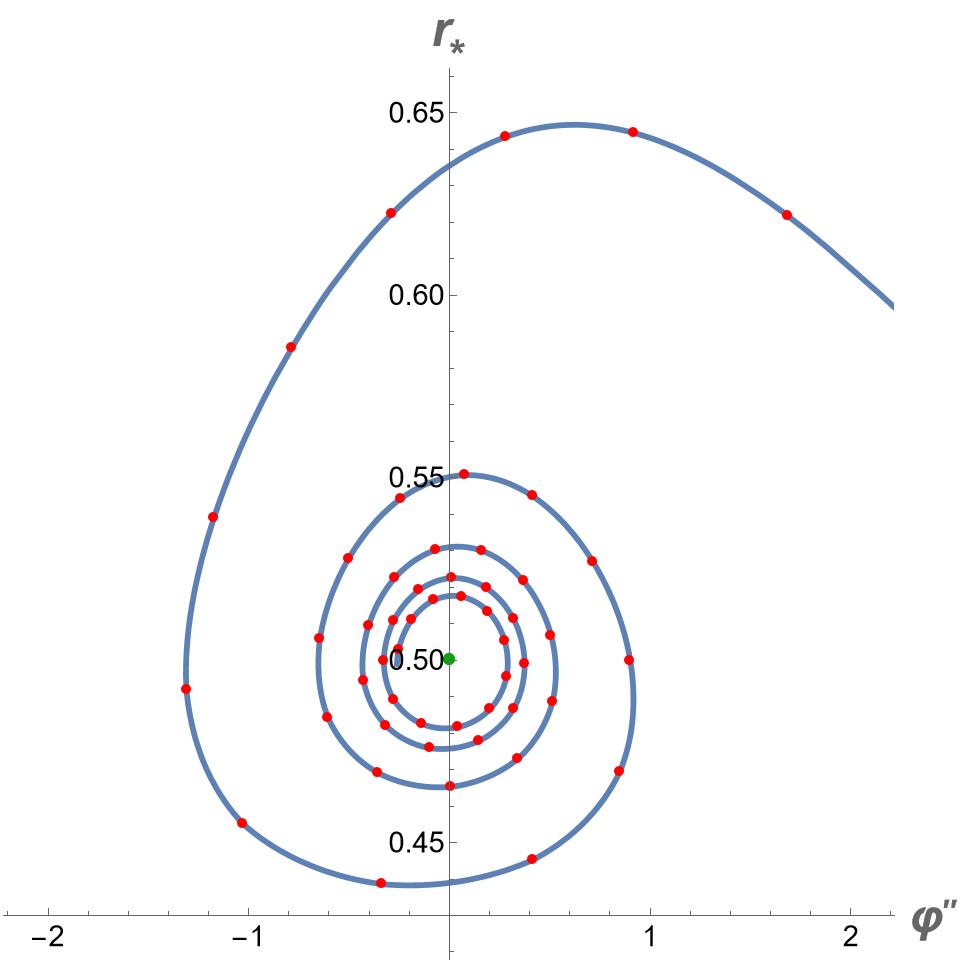}
	\caption{{\small Left: The graph of $\varphi''(\ell)$ as a function of the initial height $z_0>0$. Center: The graph of $r_*=r(\ell)$ as a function of the initial height $z_0>0$. The green horizontal line represents the radius of the critical cylinder, i.e., $r=1/(2c_o)$. Right: A piece of the planar curve $z_0>0\longmapsto (\varphi''(\ell),r_*)$. The green point corresponds to the values of the critical cylinder, i.e., $(0,1/(2c_o))$. For the three cases we have fixed $c_o=1$.}}
	\label{graphs}
\end{figure} 

It is natural to raise the question of whether or not axially symmetric closed Helfrich surfaces of genus zero that are not symmetric with respect to $\{z=0\}$ exist. We recall that a $\mathcal{C}^3$ regularity at $z=0$ is essential (Lemma \ref{2}) and, hence, we must find two solutions $\gamma_+$ and $\gamma_-$ of \eqref{system1}-\eqref{system3} with initial conditions \eqref{conditions} for different initial heights $z_0>0$, such that, $r_+(\ell)=r_-(\ell)$ and $\varphi_+''(\ell)=-\varphi_-''(\ell)$ (see Corollary \ref{H'} and Remark \ref{sec}). These are sufficient and necessary conditions to guarantee that the gluing procedure has sufficient regularity so that the resulting surface is critical for the Helfrich energy $\mathcal{H}$. However, we observe from the representation of the planar curve on the right of Figure \ref{graphs} that these conditions are not compatible. Indeed, by fixing an arbitrary radius $r_*=r(\ell)$, which corresponds with an horizontal line in the figure, we see that the curve $(\varphi''(\ell),r_*)$ may cut this horizontal line several times but never at the same distance with respect to the vertical axis, given the ``spiral-type'' shape of the planar curve. 

This numerical evidence then allows us to propose the following conjecture.

\begin{conjecture}\label{conjecture} Let $\Sigma$ be a closed genus zero surface and $X:\Sigma\longrightarrow\mathbb{R}^3$ an axially symmetric immersion with non-constant mean curvature. The immersion is critical for the Helfrich energy $\mathcal{H}$ if and only if, after a suitable rigid motion and translation of the vertical coordinate, it satisfies the reduced membrane equation \eqref{RME0} and the rescaling conditions \eqref{hh} are satisfied.  
\end{conjecture}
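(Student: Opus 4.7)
The backward implication is established directly by Proposition~\ref{symmetry}. My plan addresses the forward implication. Let $X:\Sigma\to\r^3$ be an axially symmetric Helfrich sphere with non-constant mean curvature. By Theorem~\ref{rescaling}, after a rigid motion and a vertical translation, $X$ satisfies \eqref{RME0} and \eqref{rc}. By Theorem~\ref{z=0}, $X(\Sigma)\cap\{z=0\}$ is a disjoint union of geodesic circles; for the sake of this sketch (and consistently with the construction of Section~3), I focus on the case of a single such circle $C$ of radius $r_*>0$, which splits $\Sigma=\Sigma_+\cup\Sigma_-$. The two halves are generated by curves $\gamma_\pm$; after reflecting $\gamma_-$ across the $r$-axis if needed (Proposition~\ref{reflection}), both are solutions of \eqref{system1}-\eqref{system3} with initial conditions \eqref{conditions} and initial heights $z_{0,\pm}>0$, and the gluing conditions read
\begin{equation}\label{glue}
r_*(z_{0,+})=r_*(z_{0,-})=r_*\,,\qquad \varphi_+''(\ell_+)+\varphi_-''(\ell_-)=0\,,
\end{equation}
the second being equivalent to \eqref{rc} via Corollary~\ref{H'}. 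If one proves that \eqref{glue} forces $z_{0,+}=z_{0,-}$, then $\gamma_+\equiv\gamma_-$ by uniqueness of the analytic initial value problem at the pole (cf.\ Proposition~\ref{existence}), so $\varphi_+''(\ell_+)=\varphi_-''(\ell_-)=0$ from \eqref{glue}, and Remark~\ref{sec} then yields \eqref{hh}.

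The conjecture therefore reduces to a rigidity property of the shooting map
\[
\Phi:(0,\infty)\longrightarrow\r^2\,,\qquad \Phi(z_0):=\bigl(r_*(z_0),\,\varphi''(\ell)(z_0)\bigr),
\]
namely: if $\Phi(z_{0,+})=(r_*,a)$ and $\Phi(z_{0,-})=(r_*,-a)$, then $a=0$ and $z_{0,+}=z_{0,-}$. The right panel of Figure~\ref{graphs} suggests that $\Phi$ traces a \emph{monotone spiral} accumulating at the critical cylinder point $(1/(2c_o),0)$, and such a structure implies the desired rigidity: successive crossings of the vertical line $\{r=r_*\}$ by $\Phi$ occur with strictly decreasing $|\varphi''(\ell)|$, so two points of $\Phi$ on this line with opposite second coordinates must coincide at the center. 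My plan to establish the monotone-spiral structure has three steps: (i)~rewrite the system \eqref{system1}-\eqref{system3} in autonomous variables measuring the deviation from the critical cylinder $r\equiv 1/(2c_o)$, $\varphi\equiv -\pi/2$, exposing the cylinder as a degenerate limit of the dynamics; (ii)~carry out a singular-perturbation analysis of the large-$z_0$ regime, in which generating curves oscillate near the cylinder over long arcs, to extract the leading rotation and contraction rates of $\Phi$; and (iii)~propagate these local estimates globally by identifying a Lyapunov-type functional along the shooting trajectory, a natural candidate being the hyperbolic-area-plus-gravity action $\mathcal{G}[\Sigma]$ introduced after Remark~\ref{cylinder}, restricted to the one-parameter family of axially symmetric solutions.

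The principal obstacle is step~(iii). The cylinder is a degenerate limit of the shooting flow rather than an honest fixed point, so the linear analysis in~(i)-(ii) only controls an accumulation regime, and upgrading it to a global monotone-spiral estimate demands quantitative control of the nonlinear dynamics of \eqref{system1}-\eqref{system3} far from the cylinder. A practical fallback is to complement the large-$z_0$ asymptotics with a perturbative expansion near $z_0=0$, where by Remark~\ref{circle} the generating curves are $\mathcal{C}^1$-close to circles of radius $z_0$ and $\Phi$ admits an explicit expansion, then bridge the two regimes on a compact intermediate window by a rigorous computer-assisted verification of the monotonicity. Orthogonally, it is worth pursuing a moving-plane argument applied directly to the second order equation \eqref{RME0}, sliding horizontal planes $\{z=t\}$ downward and invoking the maximum principle for the quasilinear operator governing \eqref{RME0}; the delicate point there is controlling the singularity at $z=0$, precisely where the reflected and original surfaces would be forced to touch.
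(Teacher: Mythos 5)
This statement is labelled a \emph{conjecture} in the paper, and the paper does not prove it: the authors only reduce the forward implication to a rigidity property of the shooting map and then appeal to the numerical picture in Figure \ref{graphs} (the ``spiral-type'' shape of $z_0\mapsto(\varphi''(\ell),r_*)$). Your proposal reaches exactly the same point. The backward implication via Proposition \ref{symmetry}/Theorem \ref{rescaling} is correct, and your reduction of the forward implication --- Theorem \ref{rescaling} gives \eqref{RME0} and \eqref{rc}, the two generating arcs must satisfy $r_+(\ell_+)=r_-(\ell_-)$ and $\varphi_+''(\ell_+)=-\varphi_-''(\ell_-)$ by Corollary \ref{H'} and Lemma \ref{3}, and \eqref{hh} would follow from Remark \ref{sec} once $z_{0,+}=z_{0,-}$ is forced --- is precisely the reformulation the paper gives in the paragraph preceding the conjecture. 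So the logical skeleton is sound and faithful to the paper.

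The genuine gap is that the decisive step is not proved, by you or by the paper. Everything hinges on the ``monotone spiral'' rigidity of $\Phi(z_0)=(r_*(z_0),\varphi''(\ell)(z_0))$: that two points of this curve on a common vertical line $\{r=r_*\}$ with opposite second coordinates must coincide at $(1/(2c_o),0)$. Your steps (i)--(iii) are a programme, not an argument: the linearization near the cylinder (which, as you note, is a degenerate limit rather than a hyperbolic fixed point of the shooting flow) would at best control the accumulation regime, and no Lyapunov functional is actually exhibited --- $\mathcal{G}$ is only named as a candidate. The fallback options (computer-assisted verification on a compact window, or a moving-plane argument for \eqref{RME0} whose key difficulty at the singular set $\{z=0\}$ you leave unresolved) are likewise unexecuted. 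One further loose end: you restrict to both caps being unduloid-type ($z_{0,\pm}>0$); to close the forward implication one must also rule out a closed surface assembled from an unduloid-type cap and an ovaloid-type cap ($z_0\in(-1/c_o,0)$), which the paper excludes only in the symmetric setting of Section 5 via the sign of $\nu_3$. In short: the proposal is a reasonable research plan that reproduces the paper's own open reduction, but it does not constitute a proof of the conjecture.
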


\begin{rem} One of the implications of Conjecture \ref{conjecture} has already been shown in Theorem \ref{rescaling}. Indeed, if the rescaling conditions \eqref{hh} are satisfied, then the rescaling condition \eqref{rc} on the whole surface $\Sigma$ is satisfied automatically. The validity of the converse would imply that Proposition \ref{symmetry} holds for every axially symmetric Helfrich sphere and, hence, these surfaces would necessary be symmetric with respect to the plane $\{z=0\}$. In addition, the proof of the conjecture would lead to the complete classification of axially symmetric Helfrich spheres with non-constant mean curvature. These surfaces would be constructed from the reduced membrane equation \eqref{RME0} looking for those generating curves such that $\varphi''(\ell)=0$ holds. Therefore, the surfaces of Figure \ref{Helfrich} would be the first ones of the family. \end{rem}

\section{Appendix. Circular Biconcave Discoids}

In the literature, there exist examples of non-trivial (other than round spheres) explicit solutions of \eqref{EL} which do not satisfy the reduced membrane equation \eqref{RME0}. For example, the axially symmetric family of circular biconcave discoids  that appeared in \cite{NOO}. These surfaces satisfy the conditions of Theorem \ref{rescaling}, that is, they are axially symmetric topological spheres with non-constant mean curvature. However, they fail to be $\mathcal{C}^2$ at the two cuts with the axis of rotation, a singularity previously observed and which raised questions regarding the suitability of these surfaces to model real red blood cells (see, for instance, \cite{Tu}). We will next show that circular biconcave discoids are not critical points of $\mathcal{H}$. Indeed, the left-hand side of \eqref{EL} for the circular biconcave discoids is a Dirac's delta centered at the two `poles'.

\begin{proposition}\label{discoids}
	Let $\Sigma$ be a closed genus zero surface and $X_D:\Sigma\longrightarrow\r^3$ the axially symmetric immersion obtained by rotating around the $z$-axis the arc length parameterized curve $\gamma(s)=(r(s),z(s))$ where
\begin{equation}\label{fi}
\varphi(s)=\arcsin\left(-2c_o r(s)\log r(s)+A r(s)\right),\quad\quad\quad c_o\neq 0\,,\quad A\in\r\,,
\end{equation}
	is the angle between the positive part of the $r$-axis and the tangent vector to $\gamma(s)$. Then, $\delta \mathcal{H}[\Sigma]\neq 0$. In other words, the immersion $X_D$ is not critical for the Helfrich energy $\mathcal{H}$.
\end{proposition}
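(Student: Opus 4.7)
The plan is to view the discoid as a smooth solution of the Euler-Lagrange equation \eqref{EL} away from its two axial ``poles'' $p_+$ and $p_-$, and to show that the first variation $\delta\mathcal{H}$ concentrates at $\{p_+,p_-\}$ as a nonzero distribution. Concretely, I will (i) determine the leading singular behavior of $H$ near the poles from \eqref{fi}, (ii) remove small disks around the poles and apply the first variation formula with boundary terms on the resulting smooth region, and (iii) let the cutoff shrink to zero and read off a nonvanishing contribution proportional to $c_o$ at each pole.

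\textbf{Singular behavior of $H$.} Differentiating $\sin\varphi=-2c_or\log r+Ar$ and using $r'=\cos\varphi$, $z'=\sin\varphi$ gives $\varphi'=-2c_o\log r-2c_o+A$, and hence
$$2H=\frac{\sin\varphi}{r}+\varphi'=-4c_o\log r-2c_o+2A.$$
Thus $H$ has only a logarithmic singularity at the poles, so $\mathcal{H}[\Sigma]<\infty$; but $\partial_sH=-(2c_o/r)\cos\varphi\sim-2c_o/r$, so $\nabla H\notin L^2$ near $p_\pm$. This is precisely the analytic mechanism by which \eqref{fvf} can fail to vanish on a smooth variation even though its interior integrand vanishes on the smooth part.

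\textbf{The boundary computation.} For $\psi\in\mathcal{C}^\infty(\Sigma)$ let $\Sigma_\epsilon:=\Sigma\setminus(D_\epsilon^+\cup D_\epsilon^-)$, where $D_\epsilon^\pm$ is the (nearly flat) disk $\{r<\epsilon\}$ near $p_\pm$. On $\Sigma_\epsilon$ the discoid is real analytic and satisfies \eqref{EL} by \cite{NOO}, so integrating the first term of \eqref{fvf} by parts yields
$$\delta\mathcal{H}[\Sigma_\epsilon]=\oint_{\partial\Sigma_\epsilon}\bigl[(H+c_o)\partial_n\psi-\partial_nH\,\psi\bigr]d\sigma,$$
with $n$ the outward conormal to $\Sigma_\epsilon$, pointing towards the removed pole along each component of the boundary. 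Since $\partial D_\epsilon^\pm$ has Euclidean circumference $2\pi\epsilon+o(\epsilon)$ and $H+c_o=O(\log\epsilon)$, the first boundary integrand contributes $O(\epsilon\log\epsilon)\to 0$. For the second, $\partial_nH=-\partial_sH\sim 2c_o/\epsilon$ together with the continuity of $\psi$ gives
$$\oint_{\partial D_\epsilon^\pm}\partial_nH\,\psi\,d\sigma\longrightarrow 4\pi c_o\,\psi(p_\pm),$$
and letting $\epsilon\to 0$ yields $\delta\mathcal{H}[\Sigma]=-4\pi c_o[\psi(p_+)+\psi(p_-)]$, which is nonzero for $c_o\neq 0$ and a suitable $\psi$. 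Distributionally, the left-hand side of \eqref{EL} is thus $4\pi c_o(\delta_{p_+}+\delta_{p_-})$.

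\textbf{Main obstacle.} The principal technical difficulty lies in the last step: one must verify that the $\mathcal{C}^1$-but-not-$\mathcal{C}^2$ geometry at the poles still gives a boundary circle of intrinsic length $2\pi\epsilon+o(\epsilon)$, that $H+c_o$ and $\partial_nH$ are constant to leading order along it so that $\psi$ can be replaced by $\psi(p_\pm)$, and that no hidden cancellation suppresses the singular $1/\epsilon$ rate of $\partial_nH$. Once these asymptotic claims are justified, the proposition follows without further work.
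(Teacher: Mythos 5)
Your proposal is correct and follows essentially the same route as the paper's proof: compute $H+c_o=-2c_o\log r+A$ from \eqref{fi} and \eqref{H}, excise small disks around the two poles, apply the first variation formula with boundary terms, and observe that $(H+c_o)\partial_n\psi$ contributes $O(\epsilon\log\epsilon)$ while $\partial_nH\sim \pm 2c_o/\epsilon$ produces a nonzero Dirac-type contribution $4\pi c_o\psi$ at each pole. The only discrepancy is an overall sign (the paper obtains $8\pi c_o\psi(0)$) stemming from the choice of conormal orientation, which is immaterial to the conclusion, and the asymptotic facts you flag as remaining obstacles ($d\sigma\approx\epsilon\,d\theta$ and $z_r\to0$ at the poles) follow directly from the orthogonal intersection with the axis, exactly as in the paper.
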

\begin{proof} Let $\gamma(s)=(r(s),z(s))$ be the arc length parameterized generating curve of the circular biconcave discoid $X_D(\Sigma)$. We will assume that $s$ is measured from the north `pole'. From the definition of the angle $\varphi(s)$ it follows that $r'(s)=\cos\varphi(s)$ and $z'(s)=\sin\varphi(s)$, where $\left(\,\right)'$ denotes the derivative with respect to the arc length parameter $s$.

Considering the unit normal $\nu$ pointing outward of the closed surface $X_D(\Sigma)$, the mean curvature $H$ is given by \eqref{H}. Combining this with \eqref{fi}, we obtain that
\begin{equation}\label{HDiscoid}
	H+c_o=-2c_o\log r+A\,.
\end{equation}
It is then a straightforward computation to check that \eqref{EL} is satisfied for $r>0$. However, as we will see next, at $r=0$ the immersion $X_D$ is of class $\mathcal{C}^1$ but not $\mathcal{C}^2$ and, hence, \eqref{EL} does not characterize equilibria for $\mathcal{H}$ there.

In order to check the regularity at $r=0$ of the immersion $X_D$ it is enough to analyze how the generating curve $\gamma(s)=(r(s),z(s))$ meets the $z$-axis. From \eqref{fi}, we deduce that when $r\to 0$, $\varphi\to 0$ and, thus, the curve $\gamma$ cuts the $z$-axis orthogonally. This proves that $X_D$ is of class $\mathcal{C}^1$ at $r=0$. Next, differentiating twice $\gamma(s)$ with respect to the arc length parameter $s$ and using the relations between $r'(s)$ and $z'(s)$ with the angle $\varphi(s)$, we get $\gamma''(s)=(-\varphi'(s)\sin\varphi(
s),\varphi'(s)\cos\varphi(s))$. The proof that $X_D$ is not of class $\mathcal{C}^2$ at $r=0$ follows by verifying that these second derivatives do not exist when $r=0$. Indeed, it is enough to compute
$$\varphi'(s)\cos\varphi(s)=\left(\sin\varphi(s)\right)'=\left(-2c_o\log r(s)-2c_o+A\right)\cos\varphi(s)\longrightarrow\pm\infty\,,$$
when $r\to 0$.

We will now compute the first variation of the Helfrich energy $\mathcal{H}$ at the immersion $X_D$. Due to the lack of regularity, the equation \eqref{EL} does not characterize the critical points when $r=0$ and, hence, we must remove those singularities and take the corresponding limit. Denote by $D_T$ and $D_B$ the top (respectively, bottom) disc type graphs in $\Sigma$ around $r=0$ of radius $\epsilon>0$ sufficiently small and let $\Sigma_\epsilon=\Sigma\setminus\left( D_T\cup D_B\right)$. From the first variation formula of the Helfrich energy  we have, for normal variations $\delta X=\psi\nu$, $\psi\in\mathcal{C}_o^\infty(\Sigma)$,
\begin{eqnarray*}
	\delta\left(\int_{\Sigma_\epsilon}(H+c_o)^2\,d\Sigma\right)&=&\int_{\Sigma_\epsilon}\left(\Delta H+2(H+c_o)\left(H(H-c_o)-K\right)\right)\psi\,d\Sigma\\&&+\oint_{\partial\Sigma_\epsilon}\left((H+c_o)\partial_n\psi-\partial_n H\psi\right)d\sigma\,,
\end{eqnarray*}
where $\partial_n$ is the derivative in the conormal direction. Since the immersion $X_D$ satisfies \eqref{EL} for $r>0$, the integrand of the surface integral above is zero. On the other hand, the boundary integral can be decomposed as the integral over $\partial D_T$ and $\partial D_B$, with the suitable choice of orientation. 

The immersion $X_D$ is symmetric with respect to the plane $\{z=0\}$, so we will only consider the top disc $D_T$ (the argument in $D_B$ is similar). On the graph $D_T$, the derivative in the conormal direction can be computed as
$$\partial_n=\frac{1}{\sqrt{1+z_r^2\,}}\,\partial_r\,.$$
Hence, using \eqref{HDiscoid} and the expression of the conormal derivative in terms of the radial derivative, we obtain
$$\oint_{\partial D_T} (H+c_o)\partial_n\psi\,d\sigma=\oint_{\partial D_T} \left(2c_o\log r+A\right)\partial_n\psi\,d\sigma=\int_0^{2\pi}\frac{\epsilon \left(2c_o \log\epsilon +A\right)\partial_r\psi}{\sqrt{1+z_r^2\,}}\,d\theta\longrightarrow 0\,,$$
when $\epsilon\to 0$. While, for the other integral,
$$\oint_{\partial D_T} \partial_nH\psi\,d\sigma=-\oint_{\partial D_T} 2c_o\partial_n\left(\log r\right)\psi\,d\sigma=-\int_0^{2\pi}\frac{2c_o\psi}{\sqrt{1+z_r^2\,}}\,d\theta\longrightarrow -4\pi c_o\psi(0)\,,$$
when $\epsilon\to 0$. Observe that $\psi(0)$ is, in general, different from zero and, since $c_o\neq 0$, so is this limit.

Combining above computations on $D_T$ with the analogue ones in $D_B$ (omitted here for the sake of simplicity), we conclude with
$$\delta\mathcal{H}[\Sigma]=\lim_{\epsilon\to 0} \delta\left(\int_{\Sigma_\epsilon}(H+c_o)^2\,d\Sigma\right)=8\pi c_o\psi(0)\,.$$
Consequently, the first variation is not always zero and, hence, $X_D$ is not critical for the Helfrich energy $\mathcal{H}$. Observe also that, but for the coefficient $8\pi c_o$, this computation shows that the left-hand side of \eqref{EL} at $X$ is a Dirac's delta centered at $r=0$. \end{proof}

\begin{figure}[h!]
	\centering
	\includegraphics[height=4.1cm]{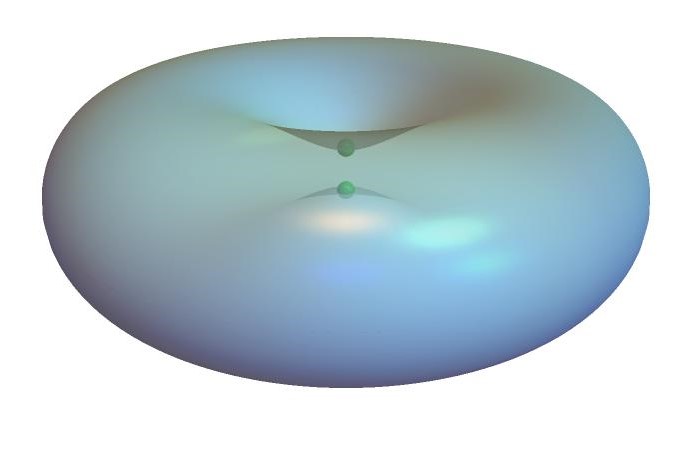}\,
	\includegraphics[height=4.1cm]{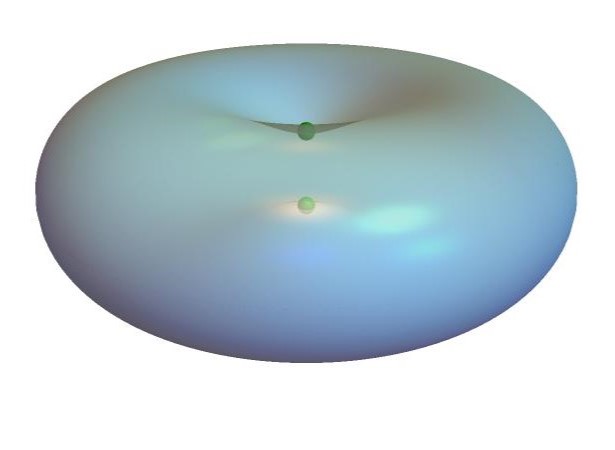}\,
	\includegraphics[height=4.1cm]{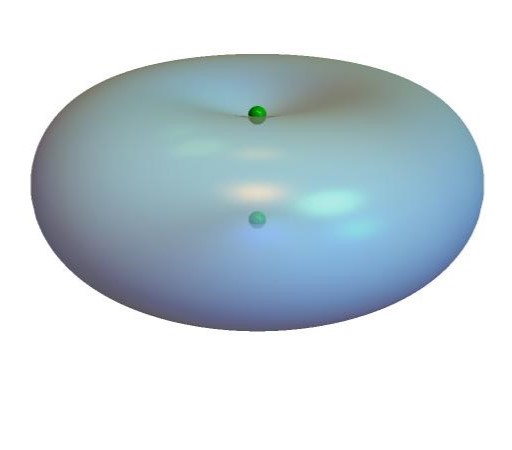}
	\caption{{\small Three circular biconcave discoids, that is, axially symmetric topological spheres with non-constant mean curvature solution of \eqref{EL} for $r>0$. The green points are the `poles', where the surfaces are only of class $\mathcal{C}^1$. Due to this lack of regularity, these discoids are not Helfrich surfaces.}}
	\label{Discoids}
\end{figure}

\begin{rem} Although circular biconcave discoids are not critical points of the Helfrich energy $\mathcal{H}$, they are central in the theory of Helfrich surfaces. Along these surfaces we have $H^2-K=c_o^2$  and, hence, they play a similar role to totally umbilical spheres for the Willmore energy. In the sense that the modified conformal Gauss map $Y^{c_o}$  fails to be a definite immersion at all the points of these surfaces \cite{PP2}.
\end{rem}

\section*{Acknowledgments}

The authors would like to thank the referees for carefully reviewing the paper.

Rafael L\'opez  has been partially supported by MINECO/MICINN/FEDER grant no. PID2023-150727NB-I00,  and by the ``Mar\'{\i}a de Maeztu'' Excellence Unit IMAG, reference CEX2020-001105- M, funded by MCINN/AEI/10.13039/ 501100011033/ CEX2020-001105-M.



\begin{flushleft}
	Rafael L{\footnotesize \'OPEZ}\\
	Departamento de Geometr\'ia y Topolog\'ia, Universidad de Granada, 18071 Granada, Spain\\
	E-mail: rcamino@ugr.es
\end{flushleft}

\begin{flushleft}
	Bennett P{\footnotesize ALMER}\\
	Department of Mathematics and Statistics, Idaho State University, Pocatello, ID, 83209, USA\\
	E-mail: palmbenn@isu.edu
\end{flushleft}

\begin{flushleft}
	\'Alvaro P{\footnotesize \'AMPANO}\\
	Department of Mathematics and Statistics, Texas Tech University, Lubbock, TX, 79409, USA\\
	E-mail: alvaro.pampano@ttu.edu
\end{flushleft}

\newpage

\begin{figure}[h!]
	\centering
	\includegraphics[height=24cm]{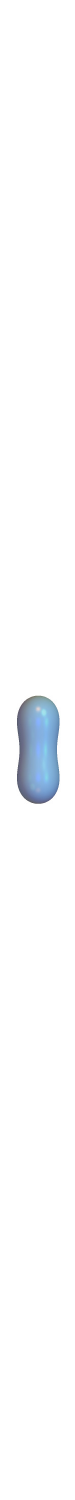}\quad\quad
	\includegraphics[height=24cm]{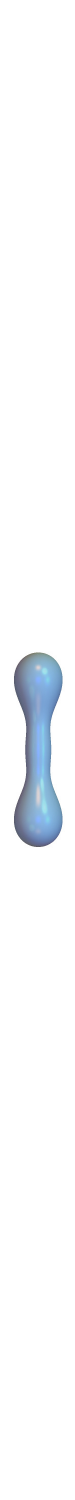}\quad\quad
	\includegraphics[height=24cm]{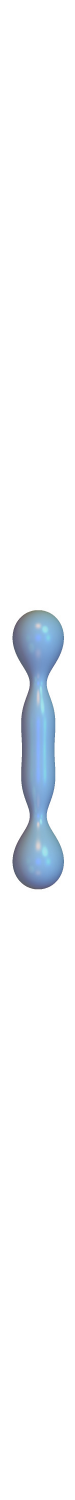}\quad\quad
	\includegraphics[height=24cm]{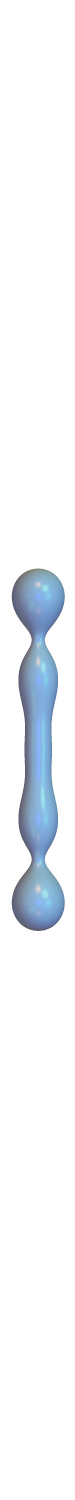}\quad\quad
	\includegraphics[height=24cm]{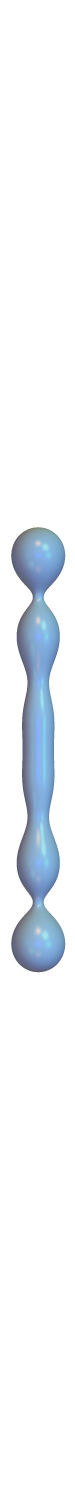}\quad\quad
	\includegraphics[height=24cm]{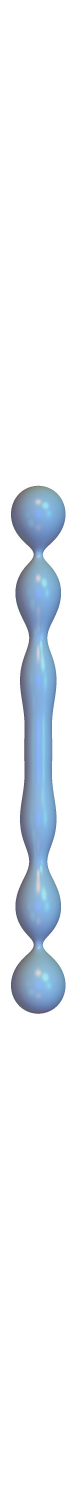}\quad\quad
	\includegraphics[height=24cm]{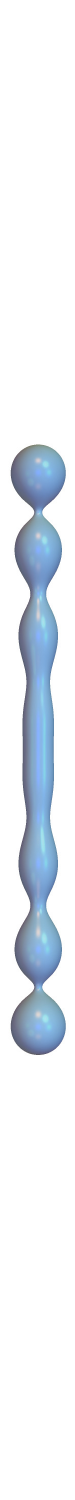}\quad\quad
	\includegraphics[height=24cm]{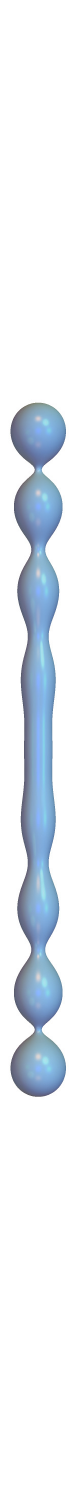}
\end{figure}

\begin{figure}[h!]
	\centering
	\includegraphics[height=24cm]{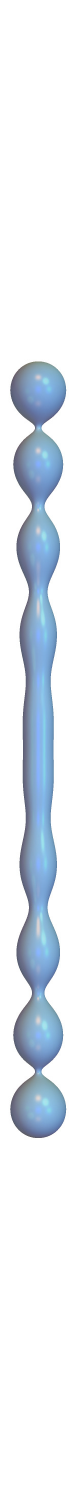}\quad\quad
	\includegraphics[height=24cm]{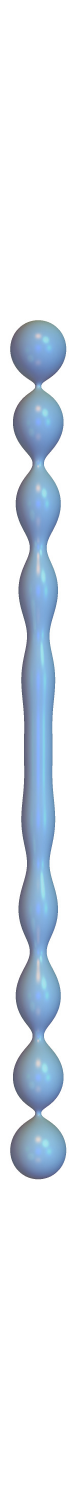}\quad\quad
	\includegraphics[height=24cm]{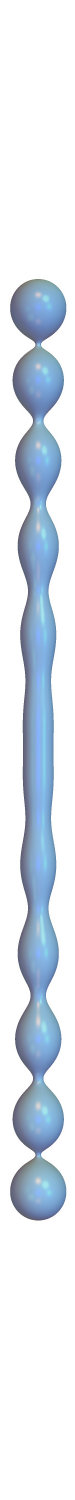}\quad\quad
	\includegraphics[height=24cm]{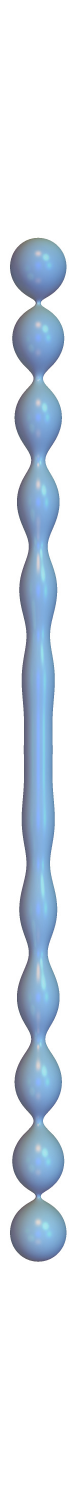}\quad\quad
	\includegraphics[height=24cm]{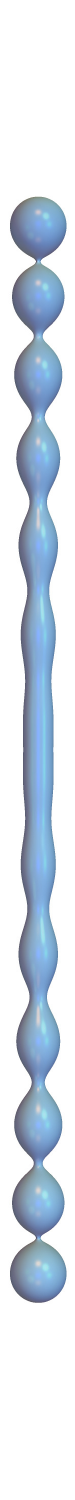}\quad\quad
	\includegraphics[height=24cm]{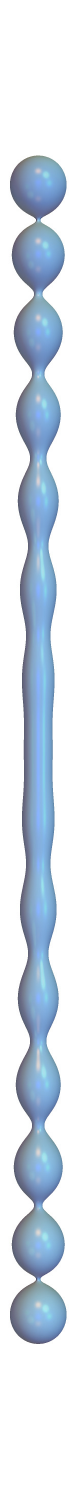}\quad\quad
	\includegraphics[height=24cm]{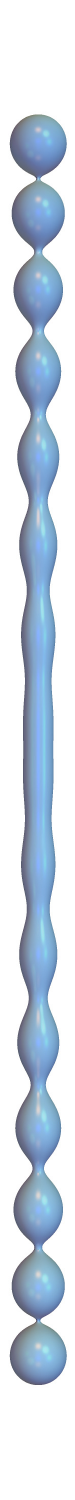}\quad\quad
	\includegraphics[height=24cm]{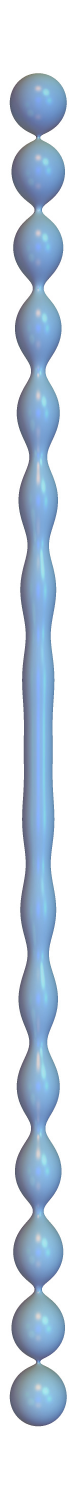}
\end{figure}

\end{document}